\documentclass[12pt]{amsart}
\usepackage{amsmath,amsfonts,amssymb,amsthm,mathtools,a4wide,url,mathscinet,mathdots,tensor,MnSymbol,wasysym,marvosym}
\usepackage[all]{xy}
\usepackage{setspace}
\usepackage[usenames]{color}

\makeatletter
\newenvironment{subtheorem}[1]{%
  \def\subtheoremcounter{#1}%
  \refstepcounter{#1}%
  \protected@edef\theparentnumber{\csname the#1\endcsname}%
  \setcounter{parentnumber}{\value{#1}}%
  \setcounter{#1}{0}%
  \expandafter\def\csname the#1\endcsname{\theparentnumber.\Alph{#1}}%
  \ignorespaces
}{%
  \setcounter{\subtheoremcounter}{\value{parentnumber}}%
  \ignorespacesafterend
}
\makeatother
\newcounter{parentnumber}

\newtheorem{theorem}{Theorem}[section]
\newtheorem{lemma}[theorem]{Lemma}
\newtheorem{definition}[theorem]{Definition}
\newtheorem{proposition}[theorem]{Proposition}
\newtheorem{corollary}[theorem]{Corollary}
\theoremstyle{remark}
\newtheorem{remark}[theorem]{Remark}
\newtheorem{example}[theorem]{Example}

\newcommand{\C}{\mathbb{C}}

\newcommand{\Z}{\mathbb{Z}}
\newcommand{\Fl}{\mathcal{F}}

\newcommand{\ess}{\mathcal{E}}
\newcommand{\smsqro}{\square}
\newcommand{\bsqro}{\setlength\fboxrule{1.1pt}\setlength\fboxsep{0pt}\fbox{\phantom{\rule{5pt}{5pt}}}}
\newcommand{\smsqr}{\smsqro_n}
\newcommand{\bsqr}{\bsqro_n}
\newcommand{\trnc}[1]{\overline{#1}}
\newcommand{\rk}{r}
\newcommand{\pair}{\Sigma}
\newcommand{\grph}{\Gamma}
\newcommand{\RR}{\mathcal{R}}
\newcommand{\Trans}{\RR_n}
\newcommand{\cvr}{\triangleleft}
\newcommand{\RRs}{\RR^\cvr}
\newcommand{\GL}{\operatorname{GL}}
\newcommand{\pairsets}{\mathfrak{P}}
\newcommand{\pairset}{\pairsets_n}
\newcommand{\lshft}[1]{\overset{\leftarrow}{#1}}               
\newcommand{\lushft}[1]{{#1}^{\nwarrow}}
\newcommand{\rshft}[1]{\overset{\rightarrow}{#1}}              
\newcommand{\rdshft}[1]{{#1}^{\searrow}}
\newcommand{\prp}{\mathcal{P}}

\newcommand{\prpu}{\mathcal{P}^{\uparrow}}
\newcommand{\prpd}{\mathcal{P}^{\downarrow}}
\newcommand{\qrp}{\mathcal{Q}}
\newcommand{\qrpu}{\mathcal{Q}^{\uparrow}}
\newcommand{\qrpd}{\mathcal{Q}^{\downarrow}}
\newcommand{\auxs}{\mathcal{A}}
\newcommand{\auxss}{\auxs^\cvr}
\newcommand{\nextd}[1]{{#1}^{\downarrow}}
\newcommand{\nextrd}[1]{{#1}^{\lcurvearrowse}}
\newcommand{\nextu}[1]{{#1}^{\uparrow}}
\newcommand{\nextlu}[1]{{#1}^{\lcurvearrownw}}
\newcommand{\rest}{\big|}
\newcommand{\pnt}{\underline{p}}
\newcommand{\X}{L}
\newcommand{\Xc}{\tilde L}
\newcommand{\diff}{\Delta}
\newcommand{\crnrs}{\mathcal{C}}
\newcommand{\crit}{\mathfrak{C}}
\newcommand{\red}{r}
\newcommand{\cmpr}{\perp}
\newcommand{\infl}{\mathcal{I}}

\makeatletter
\renewcommand*\env@matrix[1][\arraystretch]{%
  \edef\arraystretch{#1}%
  \hskip -\arraycolsep
  \let\@ifnextchar\new@ifnextchar
  \array{*\c@MaxMatrixCols c}}
\makeatother
\begin{document}

\author{Erez Lapid}
\title{A tightness property of relatively smooth permutations}

\begin{abstract}
It is well known that many geometric properties of Schubert varieties of type $A$ can be interpreted combinatorially.
Given two permutations $w,x\in S_n$ we give a combinatorial consequence of the property that the smooth locus of the
Schubert variety $X_w$ contains the Schubert cell $Y_x$. This is a necessary ingredient for the interpretation of
recent representation-theoretic results of the author with M\'inguez in terms of identities of Kazhdan--Lusztig polynomials.
\end{abstract}

\maketitle

\setcounter{tocdepth}{1}
\tableofcontents

\section{Introduction}

Consider the flag variety $\Fl_n$ (over $\C$) consisting of all flags
\[
0=V_0\subset V_1\subset\dots\subset V_n=\C^n,\ \ \dim V_i=i
\]
of an $n$-dimensional vector space. It can be identified with $\GL_n(\C)/B_n$ where $B_n\subset\GL_n(\C)$ is the subgroup of upper triangular matrices,
which is the stabilizer of the standard flag $V_i=\C^i$.
The orbits of $B_n$ on $\Fl_n$ are parameterized by permutation matrices. Let $Y_w$ and $X_w$, respectively, be the Schubert cell (the orbit)
and the Schubert variety (its closure) corresponding to a permutation $w\in S_n$.
The dimension of $Y_w$ (and $X_w$) is the length of $w$.
The inclusion relation $Y_x\subset X_w$ gives rise to the Bruhat 
partial order on $S_n$ which can be described combinatorially as
$\rk_w\le\rk_x$ where $\rk_w:\{0,\dots,n\}\times\{0,\dots,n\}\rightarrow\Z_{\ge0}$ is the rank function
\[
\rk_w(i,j)=\#\{u=1,\dots,i:w(u)\le j\}\le\min(i,j)=\rk_e(i,j).
\]
Thus, $X_w$ consists of the flags satisfying the conditions
\begin{equation} \label{eq: condXw}
\dim (V_i\cap \C^j)\ge\rk_w(i,j),\ i,j=1,\dots,n.
\end{equation}
In practice, many of these conditions are redundant \cite{MR1154177}.

It is well-known that smooth Schubert varieties are defined by inclusions, i.e., by relations of the form
$V_i\subset\C^j$ or $\C^j\subset V_i$ \cite{MR870962, MR1013667} -- see also \cite{MR1934291}.
In other words, in the conditions \eqref{eq: condXw} above it suffices to take $i,j$ for which $\rk_w(i,j)=\rk_e(i,j)$.
This fact admits the following generalization which is our main result.
\begin{theorem} \label{thm: mainintro}
Suppose that $Y_x$ is contained in the smooth locus of $X_w$.\footnote{Recall that $X_w$ is smooth if and only if
it is smooth at the point $Y_e$.}
Let $y\in S_n$ and assume that $\rk_y(i,j)=\rk_w(i,j)$ whenever $\rk_w(i,j)=\rk_x(i,j)$.
Then $Y_y\subset X_w$.
\end{theorem}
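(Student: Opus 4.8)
The plan is to recast both hypothesis and conclusion in terms of rank matrices, to extract the geometric content of smoothness from the local model of $X_w$ at the torus-fixed point of $Y_x$, and then to reduce everything to a combinatorial propagation.

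\emph{Recasting.} As reviewed above, $Y_y\subset X_w$ is equivalent to $y\le w$, i.e.\ to $\rk_w\le\rk_y$ pointwise; and $X_w$ is smooth along $Y_x$ if and only if it is smooth at the unique $T$-fixed point $e_x\in Y_x$, because $Y_x$ is a single $B_n$-orbit and the smooth locus of $X_w$ is open and $B_n$-stable. The hypothesis already forces $x\le w$, hence $\rk_w\le\rk_x$; writing $S=\{(i,j):\rk_w(i,j)=\rk_x(i,j)\}$, the assumption on $y$ is precisely $\rk_y|_S=\rk_w|_S$. So the task is: if $X_w$ is smooth at $e_x$ and $\rk_y|_S=\rk_w|_S$, then $\rk_w\le\rk_y$. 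I would also note that $\{x:X_w$ smooth at $e_x\}$ is an order filter in $[e,w]$ — if $x\le x'\le w$ and $X_w$ were singular at $e_{x'}$, its singular locus, a union of Schubert varieties, would contain $X_{x'}\ni e_x$ — so one may take $x$ minimal there; this only shrinks $S$ and strengthens the hypothesis.

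\emph{The local model.} Work in the affine chart $\mathcal O_x=\{xuB_n:u\in U^-\}\cong\mathbb A^{\binom n2}$ centred at $e_x$, with coordinates the entries $u_{a,k}$ ($a>k$), on which $T$ acts with the pairwise distinct nonzero weights $\epsilon_{x(a)}-\epsilon_{x(k)}$. The condition \eqref{eq: condXw} at $(i,j)$ becomes the vanishing of the $(i-\rk_w(i,j)+1)$-minors of an explicit matrix $M_{i,j}(u)$ built from the $u_{a,k}$, whose value at $u=0$ is a partial permutation matrix of rank $i-\rk_x(i,j)$. For $(i,j)\notin S$ that rank is strictly below the bound, so those minors vanish to order $\ge 2$ at $e_x$ and contribute nothing to the Zariski tangent space; for $(i,j)\in S$ the rank equals the bound, and, the base point being a $0/1$ matrix, the differentials of the surviving minors are individual coordinate functions $u_{a,k}$. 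Hence $T_{e_x}(X_w\cap\mathcal O_x)$ is the coordinate subspace of the chart cut out by the conditions at $S$; since $X_w$ is smooth at $e_x$, comparing dimensions ($\dim X_w=\ell(w)$) shows that the locus of the $S$-conditions is itself smooth at $e_x$ of that same dimension, hence locally irreducible there, so $X_w$ coincides with it in a neighbourhood of $e_x$. This is the local, relative counterpart of the classical fact \cite{MR870962,MR1013667,MR1934291} that a smooth $X_w$ is defined by inclusions.

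\emph{The obstacle: globalising.} The point $e_y$ lies in no chart containing $e_x$, so one must still show $e_y\in X_w$, i.e.\ that the conditions at $S$ force $\rk_w\le\rk_y$ whenever $\rk_y|_S=\rk_w|_S$. The strategy is to convert the smoothness of $X_w$ at $e_x$ — equivalently, the coordinate-subspace description above, equivalently the set of transpositions $t$ with $xt\le w$ — into a combinatorial rigidity of $\rk_w$ on all of $\{0,\dots,n\}^2$, expressed through Fulton's essential set of $w$: every essential rank condition of $w$ should turn out to be implied by the conditions at $S$ for any $y$ with $\rk_y|_S=\rk_w|_S$. Concretely, assuming $\rk_y(i_0,j_0)<\rk_w(i_0,j_0)$ at a position $(i_0,j_0)\notin S$ minimal with this property, one would express the deficiency through neighbouring entries, all lying in $S$ and where $\rk_y$ equals $\rk_w=\rk_x$, and be driven into a configuration excluded by the relative-smoothness constraint — a contradiction. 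Carrying this out is where I expect essentially all of the difficulty to lie, and where the combinatorial apparatus of the paper (essential sets and their corners, together with the ``critical'' configurations) comes in, organised around a descending induction on positions relative to the essential set.

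Granting this, the theorem follows at once: $e_y$ satisfies every condition at $S$, since $\rk_y=\rk_w$ there, so $e_y\in X_w$ and $Y_y\subset X_w$. It is worth stressing why the hypothesis is the equality $\rk_y|_S=\rk_w|_S$ rather than merely $\rk_y|_S\ge\rk_w|_S$: when $x\ne e$ the common value $\rk_w=\rk_x$ on $S$ can be strictly less than $\min(i,j)$, so a priori a rank function could overshoot it on $S$ while still dropping below $\rk_w$ somewhere off $S$; it is exactly this possibility that the propagation step must rule out.
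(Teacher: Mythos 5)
There is a genuine gap: your argument stops exactly where the paper's proof begins. The recasting (smoothness of $X_w$ along $Y_x$ as a condition on the pair $(w,x)$, the conclusion as $\rk_w\le\rk_y$, the hypothesis as $\rk_y\equiv\rk_w$ on $S=\X_{(w,x)}$) is correct and is the same as the paper's Theorem \ref{thm: main}, and the tangent-space computation in your ``local model'' paragraph is the standard Lakshmibai--Seshadri description that the paper also quotes. But the entire substance of the result is the ``globalising'' step, which you explicitly defer (``Carrying this out is where I expect essentially all of the difficulty to lie \dots Granting this, the theorem follows at once''). That step is not a routine verification: it occupies Sections 3 and 4 of the paper, via the influence-set mechanism ($C^w_A(B)$ and Corollary \ref{cor: scor}, which propagate the inequality $\rk_y\ge\rk_w$ from $S$ across rectangles with $\diff_w=0$), the key technical Proposition \ref{prop: main} asserting that the condition $\qrp_\pair(p)$ holds at every $p\in\Xc_\pair$ for a smooth pair, and a double induction on $\ell(\pair)$ using Gasharov's map together with an explicit classification of minimal reduced pairs. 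None of this, nor any substitute for it, appears in the proposal.

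Moreover, the specific strategy you sketch for the missing step --- showing that every essential rank condition of $w$ is implied by the conditions at $S$, organised around Fulton's essential set --- runs directly into the obstruction recorded in the paper's remark following Theorem \ref{thm: main}: for the smooth pair $\pair=(35142,21345)$ one has $\ess(w)=\{(1,3),(3,1),(3,3)\}\not\subset\X_\pair$, and tightness of $\pair$ is \emph{not} equivalent to $\ess(w)\subset\X_\pair$ once $x\ne e$. So the essential-set conditions cannot simply be ``read off'' at positions of $S$; one genuinely needs a multi-step propagation (the influence set may require several iterations of $C^w_{\X_\pair}$), and proving that this propagation exhausts $\bsqr$ is precisely where smoothness enters, through Gasharov's map. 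In short: the setup is fine, but the proof of the theorem is missing.
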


Of course, unlike in the case $x=e$, the condition on $y$ is in general not necessary
for the inclusion $Y_y\subset X_w$.

The condition on the pair $(w,x)$ can be spelled out combinatorially using the well-known description of the tangent space of
$X_w$ at $Y_x$ \cite{MR752799}.

Theorem \ref{thm: mainintro} is a necessary combinatorial ingredient for the translation of the representation-theoretic result
of \cite{1605.08545} to a certain identity of Kazhdan--Lusztig polynomials with respect to the symmetric group.
We refer the reader to [ibid., \S10] for a self-contained statement of the identity and for an explanation of the role of Theorem
\ref{thm: mainintro}.\footnote{In fact, for this application one may assume in addition that $x\le y$
and that $x$ is $213$-avoiding.}
At any rate, we hope that Theorem \ref{thm: mainintro} is interesting in its own right.
Our proof is purely combinatorial relying on a simple construction due to Gasharov \cite{MR1827861},
which was used in the solution of the Lakshimbai--Sandhya conjectural description of the singular locus of $X_w$,
completed independently by Billey--Warrington \cite{MR1990570}, Manivel \cite{MR1853139}, Kassel--Lascoux--Reutenauer \cite{MR2015302} and Cortez \cite{MR1994224}.
(However, we are not aware of a deeper connection between these results and ours.)
We will give an outline of the proof in the next section after introducing some notation.
It would be desirable to give a geometric context of the statement, if not the proof, of Theorem \ref{thm: mainintro}.

\section{Statement of main result}

\subsection{Notation and preliminaries} Let $S_n$ be the symmetric group on $\{1,\dots,n\}$ with length function $\ell$.
We denote the Bruhat order on $S_n$ by $\le$ and write $y\cvr w$ if $w$ covers $y$, i.e.,
if $y<w$ and $\ell(w)=\ell(y)+1$. (We refer to \cite{MR2133266} for basic properties about the Bruhat order.)
Let
\[
\smsqr=\{1,\dots,n-1\}\times\{1,\dots,n-1\},\ \ \bsqr=\{0,\dots,n\}\times\{0,\dots,n\}
\]
be the \emph{restricted square} and the \emph{framed square} respectively.
For any $w\in S_n$ let $\grph_w=\{\pnt_w(i):i=1,\dots,n\}\subset\bsqr$ be the graph of $w$ where $\pnt_w(i)=(i,w(i))$
and let $\rk_w:\bsqr\rightarrow\Z_{\ge0}$ be the \emph{rank function}
\[
\rk_w(i,j)=\#\{u=1,\dots,i:w(u)\le j\}.
\]
Note that $\rk_w(0,i)=\rk_w(i,0)=0$ and $\rk_w(n,i)=\rk_w(i,n)=i$ for all $i$.
Recall that for any $w,y\in S_n$ we have $y\le w$ if and only if $\rk_w\le\rk_y$ on
$\bsqr$ (or equivalently, on $\smsqr$).

For any $p=(i,j),p'=(i',j')\in\bsqr$ we write
\[
\crnrs(p,p')=\{(i,j'),(i',j)\}
\]
and define the \emph{difference function}
\begin{equation} \label{def: diff}
\diff_w(p,p'):=\sum_{q\in\{p,p'\}}\rk_w(q)-\sum_{q\in\crnrs(p,p')}\rk_w(q).
\end{equation}
We also write $p'>p$ (resp., $p'\ge p$) if $i'>i$ and $j'>j$ (resp., $i'\ge i$ and $j'\ge j$).
(We caution that $<$ is not the strict partial order subordinate to $\le$.)
Clearly, $\diff_w(p,p')=\diff_w(p',p)$.
If $p'\ge p$ then
\[
\diff_w(p,p')=\#\{i<u\le i':j<w(u)\le j'\}=\#(\grph_w\cap (p,p'])\ge0
\]
where
\[
(p,p']=\{q\in\bsqr:p<q\le p'\}.
\]
Similarly we also use the notation $[p,p')=\{q\in\bsqr:p\le q<p'\}$ and $(p,p')=\{q\in\smsqr:p<q<p'\}$.
We write $p\cmpr p'$ if either $p>p'$ or $p'>p$; equivalently $(i'-i)(j'-j)>0$.

Define the set of \emph{pairs}
\[
\pairset=\{(w,x)\in S_n\times S_n:x\le w\}.
\]
For any $\pair=(w,x)\in\pairset$ we write
\[
\rk_\pair=\rk_x-\rk_w:\bsqr\rightarrow\Z_{\ge0}
\]
and for any $p,p'\in\bsqr$ let
\[
\diff_\pair(p,p')=\diff_x(p,p')-\diff_w(p,p')=\sum_{q\in\{p,p'\}}\rk_\pair(q)-\sum_{q\in\crnrs(p,p')}\rk_\pair(q).
\]
Define the \emph{level set} of $\pair$ and its complement to be
\begin{equation} \label{def: X}
\X_\pair=\{p\in\bsqr:\rk_\pair(p)=0\},\ \ \Xc_\pair=\{p\in\bsqr:\rk_\pair(p)>0\}\subset\smsqr.
\end{equation}
Note that if $x\le y\le w$ then
\[
\X_\pair=\X_{(w,y)}\cap\X_{(y,x)}.
\]
We will often use the following simple fact:
\begin{equation} \label{sf}
\text{if $p,q\in\X_\pair$, $p\cmpr q$ and $\diff_w(p,q)=0$ then $\diff_x(p,q)=0$ and $\crnrs(p,q)\subset\X_\pair$.}
\end{equation}
Indeed, $0\le\diff_x(p,q)=\diff_\pair(p,q)=-\sum_{p'\in\crnrs(p,q)}\rk_\pair(p')\le0$.

Denote by $\Trans$ the set of transpositions in $S_n$.
Thus, $\Trans=\{t_{i,j}:1\le i<j\le n\}$ where $t_{i,j}=t_{\{i,j\}}$ is the transposition interchanging $i$ and $j$.
By our convention, when using the notation $t_{i,j}$ (in contrast to $t_{\{i,j\}}$) it is understood, often implicitly, that $i<j$.
Note that $w<wt_{i,j}$ if and only if $w(i)<w(j)$ if and only if $\pnt_w(i)<\pnt_w(j)$ and in this case
\[
\Xc_{(wt_{i,j},w)}=[\pnt_w(i),\pnt_w(j));
\]
otherwise $wt_{i,j}<w$. Thus,
\[
\ell(w)=\#\{t\in\Trans:wt<w\}.
\]

For $\pair=(w,x)\in\pairset$ let $\ell(\pair)=\ell(w)-\ell(x)\ge 0$,
\[
\RR_\pair=\{t\in\Trans:x<xt\le w\},\ \
\RRs_\pair=\{t\in\Trans:x\cvr xt\le w\}.
\]
Thus, $\ell(\pair)=0$ if and only if $w=x$ if and only if $\RRs_\pair=\emptyset$.
We have
\begin{equation} \label{eq: tijR}
t_{i,j}\in\RR_\pair\iff\pnt_x(i)<\pnt_x(j)\text{ and }\Xc_{(xt_{i,j},x)}=[\pnt_x(i),\pnt_x(j))\subset\Xc_\pair.
\end{equation}
Hence,
\begin{equation} \label{eq: 1234}
\text{if $i<j<k<l$, $x(j)<x(i)<x(l)<x(k)$ and $t_{i,k},t_{j,l}\in\RR_\pair$ then $t_{i,l}\in\RR_\pair$}.
\end{equation}
If $t=t_{i,j}\in\RR_\pair$ then $t\in\RRs_\pair$ if and only if there does not exist $i'$ such that
$t_{i,i'},t_{i',j}\in\RR_\pair$.

For any $t\in\RR_\pair$ let $\pair_t=(w,xt)\in\pairset$. Thus, $\ell(\pair_t)<\ell(\pair)$.

We have (see \S\ref{sec: gash})
\begin{equation} \label{eq: basicne}
\#\RR_\pair\ge\ell(\pair).
\end{equation}
We say that $\pair$ is \emph{smooth} if equality holds.

The geometric interpretation is as follows. As in the introduction, let $Y_w$ be the Schubert cell pertaining to $w$ in the flag variety of $\GL_n(\C)$
and let $X_w$ be the corresponding Schubert variety, i.e., the Zariski closure of $Y_w$.
Then $\pair=(w,x)\in\pairset$
if and only if $Y_x\subset X_w$ and in this case
\[
\#\RR_\pair+\ell(x)=\#\{t\in\Trans:xt\le w\}
\]
is the dimension of the tangent space of $X_w$ at any point of $Y_x$ \cite{MR752799}.
Thus, $\pair$ is smooth if and only if $Y_x$ is contained in the smooth locus of $X_w$.
(Other equivalent conditions are that $X_w$ is rationally smooth at any point of $Y_x$
or that the Kazhdan--Lusztig polynomial $P_{x,w}$ with respect to $S_n$ is $1$ \cite{MR560412, MR573434, MR788771}.
We refer the reader to \cite{MR1782635} for more details and generalizations.)
We also recall that $X_w$ is smooth (i.e., $(w,e)$ is smooth) if and only if $w$ is $3412$ and $4231$ avoiding \cite{MR1051089}.

\subsection{Symmetries}
For any $w\in S_n$ we write $w^*=w_0ww_0\in S_n$ for the \emph{upended} permutation where $w_0$ is the longest element of $S_n$.
For any $\pair=(w,x)\in\pairset$ let $\pair^*=(w^*,x^*)\in\pairset$ and $\pair^{-1}=(w^{-1},s^{-1})\in\pairset$
be the upended and \emph{inverted} pair respectively.
For any $p=(i,j)\in\bsqr$ let $p^*=(n-i,n-j)\in\bsqr$ and $p^{-1}=(j,i)\in\bsqr$ be the upended and inverted point respectively.
Thus, $\pnt_{w^*}(n+1-r)=\pnt_w(r)^*-(1,1)$ and $\pnt_{w^{-1}}(w(r))=\pnt_w(r)^{-1}$ for any $r$, $\grph_{w^*}=\grph_w^*-(1,1)$,
$\grph_{w^{-1}}=\grph_w^{-1}$, $\rk_{w^*}(p^*)=n-i-j+\rk_w(p)$, $\rk_{\pair^*}(p^*)=\rk_\pair(p)$,
$\rk_{w^{-1}}(p^{-1})=\rk_w(p)$ and $\rk_{\pair^{-1}}(p^{-1})=\rk_\pair(p)$,
In particular, $\X_{\pair^*}=\X_\pair^*$ and $\X_{\pair^{-1}}=\X_\pair^{-1}$.
Note that $\RR_{\pair^*}=\RR_\pair^*$ and $\RR_{\pair^{-1}}=\RR_\pair^x$ where $t^x=xtx^{-1}$; similarly for $\RRs$.
For any $t\in\RR_\pair$ we have $(\pair_t)^*=(\pair^*)_{t^*}$ and $(\pair_t)^{-1}=(\pair^{-1})_{t^x}$.

\subsection{Statement}
We make the following key definition.
\begin{definition}
Let $w\in S_n$.
\begin{enumerate}
\item A subset $A\subset\bsqr$ is called \emph{tight} with respect to $w$ if
for any $y\in S_n$ such that $\rk_y\rest_A\equiv\rk_w\rest_A$ we have $y\le w$.
\item We say that a pair $\pair=(w,x)\in\pairset$ is tight if $\X_\pair$ is tight with respect to $w$.
\end{enumerate}
\end{definition}

The main result of the paper is the following equivalent reformulation of Theorem \ref{thm: mainintro}.
\begin{theorem} \label{thm: main}
Every smooth pair $\pair=(w,x)\in\pairset$ is tight.
\end{theorem}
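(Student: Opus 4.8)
The plan is to prove the rank‑function form of the theorem: if $\pair=(w,x)$ is smooth and $y\in S_n$ satisfies $\rk_y\equiv\rk_w$ on $\X_\pair$, then $\rk_w\le\rk_y$ on $\bsqr$, i.e.\ $y\le w$. I proceed by induction on $\ell(\pair)$. If $\ell(\pair)=0$ then $w=x$, $\X_\pair=\bsqr$, and the hypothesis forces $y=w$; for $x=e$ this recovers the classical fact that smooth Schubert varieties are cut out by inclusions. Throughout I would feel free to replace $\pair$ by $\pair^*$ or $\pair^{-1}$, which preserve smoothness and tightness and send $\X_\pair$ to $\X_\pair^*$, resp.\ $\X_\pair^{-1}$.

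For the inductive step assume $\ell(\pair)\ge1$, so $\RRs_\pair\neq\emptyset$, and fix a suitable $t=t_{i,j}\in\RRs_\pair$ — the choice being guided by Gasharov's construction so that the half‑open rectangle $B:=[\pnt_x(i),\pnt_x(j))$ lies ``at a corner'' of $\Xc_\pair$. Put $\pair_t=(w,xt)\in\pairset$, so $\ell(\pair_t)=\ell(\pair)-1$. The first point is that $\pair_t$ is again smooth: geometrically this holds because the smooth locus of $X_w$ is a union of Schubert cells forming an upward closed subset of $[e,w]$ and containing $Y_x$, hence $Y_{xt}$ since $x\le xt\le w$; combinatorially I would deduce $\#\RR_{\pair_t}\le\#\RR_\pair-1$ from \eqref{eq: tijR}, \eqref{eq: 1234} and the analysis behind \eqref{eq: basicne}, and isolate this as a lemma. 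Since $x\le xt\le w$ we have $\rk_w\le\rk_{xt}\le\rk_x$, in fact $\rk_{\pair_t}=\rk_\pair-\mathbf 1_{B}$; hence $\X_\pair\subseteq\X_{\pair_t}$ and $\X_{\pair_t}\setminus\X_\pair$ is exactly the set of points of $B$ at which $\rk_\pair=1$.

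Now take $y$ with $\rk_y\equiv\rk_w$ on $\X_\pair$. The heart of the argument is to upgrade this to $\rk_y\ge\rk_w$ on all of $\X_{\pair_t}$, i.e.\ on $B\cap\X_{\pair_t}$. For such a point $r$ (where $\rk_\pair(r)=1$, so $\rk_w(r)=\rk_x(r)-1$) I would box $r$ against points of $\X_\pair$ lying to its lower left: because $t$ covers $x$, $\grph_x$ meets $B$ only in $\pnt_x(i)$, so $\diff_x$ over the box equals $1$; combining this with $\rk_x\equiv\rk_w$ at the other three vertices of the box, which lie in $\X_\pair$, and with $\diff_y\ge0$ and $\rk_y\equiv\rk_w$ there, gives $\rk_y(r)\ge\rk_x(r)-1=\rk_w(r)$. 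Producing those vertices of $\X_\pair$ is immediate when $\ell(\pair)=1$ (then $\bsqr\setminus B\subseteq\X_\pair$) but in general needs the structure of $\Xc_\pair$ as a union of rectangles in controlled mutual position — the input from Gasharov's construction — together with \eqref{sf} to propagate rank equalities along their common frontiers; this is where smoothness is genuinely used. Granting $\rk_y\ge\rk_w$ on $\X_{\pair_t}$, suppose for contradiction that $y\not\le w$. A standard raising argument then produces $\tilde y\ge y$ with $\rk_{\tilde y}\le\rk_y$ pointwise and $\rk_{\tilde y}\equiv\rk_w$ on $\X_{\pair_t}$ (one repeatedly passes to a Bruhat cover $y\cvr y'=yt_{a,b}$ whose rectangle $[\pnt_y(a),\pnt_y(b))$ meets $\X_{\pair_t}$ only where $\rk_y>\rk_w$, so $\rk_y$ is lowered towards $\rk_w$ on $\X_{\pair_t}$ without ever dropping below it). Since $y\le\tilde y$ and $y\not\le w$, also $\tilde y\not\le w$, contradicting the inductive hypothesis that $\pair_t$ is tight. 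Hence $y\le w$.

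The step I expect to be the main obstacle is the boxing: extracting from smoothness alone enough of the level set $\X_\pair$ surrounding each new point of $\X_{\pair_t}$, which is precisely where Gasharov's construction must enter — a general pair need not be tight, so the argument has to break without it. Secondary points are the combinatorial proof that $\pair_t$ is smooth and the bookkeeping that the raising of $y$ terminates and never perturbs $\X_\pair$.
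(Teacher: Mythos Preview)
Your inductive scheme differs structurally from the paper's: rather than reducing to $\pair_t$, the paper strengthens the statement to ``every smooth pair is \emph{influential}'' (its influence set $\infl^w(\X_\pair)$ equals $\bsqr$) and proves this directly. The boxing step you outline for points of $\X_{\pair_t}\setminus\X_\pair$ is essentially the content of the paper's Proposition~\ref{prop: main1'}: for any $r$ in the critical set $\crit_\pair$ one of the two conditions $\rshft\prp_\pair(r)$, $\lshft\prp_\pair(r)$ holds, giving a box with three vertices in $\X_\pair$ and $\diff_w=0$. Note that the box is to the south-east or north-west of $r$, not to the ``lower left'', and one does not get to choose which direction; this already takes the full induction and Gasharov's map (Corollary~\ref{cor: smth}).

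The genuine gap is the raising argument. You assert that once $\rk_y\ge\rk_w$ on $\X_{\pair_t}$ (with equality on $\X_\pair$), one can repeatedly pass to a cover $y\cvr y'$ whose rectangle meets $\X_{\pair_t}$ only where $\rk_y>\rk_w$, terminating at $\tilde y$ with $\rk_{\tilde y}\equiv\rk_w$ on $\X_{\pair_t}$. This is not standard and, as stated, need not hold: given $r\in\X_{\pair_t}$ with $\rk_y(r)>\rk_w(r)$, there is no reason the graph of $y$ should admit points $\pnt_y(a)\le r<\pnt_y(b)$ with $[\pnt_y(a),\pnt_y(b))\cap\X_{\pair_t}$ contained in the strict-inequality locus; the geometry of $\grph_y$ is unrelated to that of $\X_{\pair_t}$. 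Termination in $\ell$ is clear, but you have not argued that the terminal permutation achieves equality on $\X_{\pair_t}$. Without this, the induction hypothesis (tightness of $\pair_t$, which requires \emph{equality} on $\X_{\pair_t}$) cannot be invoked.

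The paper sidesteps this entirely. Instead of converting the inequality to equality on $\X_{\pair_t}$, it iterates the boxing to extend $\rk_y\ge\rk_w$ from $\X_\pair$ to successively larger sets $\X_\pair^{\le i}=\{p:\rk_\pair(p)\le i\}$, eventually reaching $\bsqr$. The engine is Proposition~\ref{prop: main}: for every $p\in\Xc_\pair$ the condition $\qrp_\pair(p)$ holds, a four-clause disjunction that guarantees $p\in C_{\X_\pair}^w(C_{\X_\pair}^w(\X_\pair^{\le\rk_\pair(p)-1}))$. The proof of Proposition~\ref{prop: main} for points with $\rk_\pair(p)>1$ (where your boxing does not apply) is the bulk of \S\ref{sec: propmain} and is where most of the case analysis lives; Proposition~\ref{prop: main1'} serves only as the base input. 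If you want to rescue your scheme you would need either a correct raising lemma or to replace it by an argument that $\rk_y\ge\rk_w$ propagates from $\X_{\pair_t}$ to $\bsqr$ using only equality on $\X_\pair$ --- but the latter is precisely what the influence-set machinery does, and it requires handling all of $\Xc_\pair$, not just the rank-one stratum.
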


\begin{remark}
In the case where $x=e$, Theorem \ref{thm: main} follows from \cite{MR870962, MR1013667}.
Moreover, it was proved by Gasharov--Reiner \cite{MR1934291} that $(w,e)$ is tight
(namely, $X_w$ is defined by inclusions)
if and only if $w$ avoids the patterns $4231$, $35142$, $42513$ and $351624$.
Curiously, this condition occurs in other contexts as well (cf. \cite{MR3644818}).
\end{remark}

\begin{remark}
In \cite{MR1154177} Fulton defined the \emph{essential set} of $w$ by
\[
\ess(w)=\{(i,j)\in\smsqr:w(i)\le j<w(i+1)\text{ and }w^{-1}(j)\le i<w^{-1}(j+1)\}.
\]
He showed, among other things, that
\[
y\le w\iff\rk_y(p)\ge\rk_w(p)\text{ for all }p\in\ess(w)
\]
and that this equivalence ceases to be true if we replace $\ess(w)$ by any proper subset.
See \cite{MR1412437} for a follow-up on these ideas.

Thus, if $\X_\pair\supset\ess(w)$ then $\pair$ is tight. If $x=e$ the converse is also true
\cite{MR1934291}. However, in general this is not the case.
For instance, for $n=5$ and the smooth pair $\pair=(35142,21345)$ we have
\[
\ess(w)=\{(1,3),(3,1),(3,3)\}\not\subset\X_\pair\cap\smsqr=\{(1,1),(1,3),(1,4),(3,1),(4,1)\}.
\]
This is one of the reasons why the present proof of Theorem \ref{thm: main} is technically more complicated than that of
\cite[Theorem 4.2]{MR1934291}.
\end{remark}

\subsection{First step}
Let $w\in S_n$.
For any subsets $A,B\subset\bsqr$ define
\[
C_A^w(B)=B\cup\{p\in\bsqr:\exists p'\in A\text{ such that }p\cmpr p',\diff_w(p,p')=0\text{ and }\crnrs(p,p')\subset B\}.
\]
The following simple lemma will be crucial for the argument.
\begin{lemma}
Let $w,y\in S_n$ and $A,B\subset\bsqr$.
Suppose that $\rk_y\rest_A\equiv\rk_w\rest_A$ and $\rk_y\ge\rk_w$ on $B$.
Then $\rk_y\ge\rk_w$ on $C_A^w(B)$.
\end{lemma}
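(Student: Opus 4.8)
I would verify the conclusion pointwise. Fix $p\in C_A^w(B)$; the goal is $\rk_y(p)\ge\rk_w(p)$. If $p\in B$ this is one of the hypotheses, so assume $p\notin B$. By the definition of $C_A^w(B)$ there is then a point $p'\in A$ with $p\cmpr p'$, $\diff_w(p,p')=0$ and $\crnrs(p,p')\subset B$. Write $p=(i,j)$ and $p'=(i',j')$, so that $\crnrs(p,p')=\{(i,j'),(i',j)\}$, and both of these corner points belong to $B$.

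The crux is to play off two facts about the difference function against each other. On the one hand, because $p\cmpr p'$, the quantity $\diff_y(p,p')$ is a cardinality, hence $\diff_y(p,p')\ge0$ --- this is exactly the nonnegativity $\diff_y(p,p')=\#(\grph_y\cap(p,p'])\ge0$ (or its reflection, according to whether $p'>p$ or $p>p'$) recorded in the preliminaries, applied to $y$ rather than $w$. On the other hand, $\diff_w(p,p')=0$ by hypothesis. Writing out these two relations and isolating the corner terms, they read
\[
\rk_y(p)\ \ge\ \rk_y(i,j')+\rk_y(i',j)-\rk_y(p'),\qquad
\rk_w(p)\ =\ \rk_w(i,j')+\rk_w(i',j)-\rk_w(p').
\]
It then remains to compare the two right-hand sides termwise using the other two hypotheses: $\rk_y\ge\rk_w$ at $(i,j')$ and at $(i',j)$ since these lie in $\crnrs(p,p')\subset B$, while $\rk_y(p')=\rk_w(p')$ since $p'\in A$. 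Chaining the inequalities,
\[
\rk_y(p)\ \ge\ \rk_y(i,j')+\rk_y(i',j)-\rk_y(p')\ \ge\ \rk_w(i,j')+\rk_w(i',j)-\rk_w(p')\ =\ \rk_w(p),
\]
as required.

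I do not expect a genuine obstacle: the statement is essentially a single linear manipulation built from the three hypotheses, and the construction $C_A^w(B)$ is tailored precisely so that this works. The only bookkeeping subtlety is that $p\cmpr p'$ allows both $p'>p$ and $p>p'$; but $\diff_w$, $\diff_y$ and $\crnrs$ are all symmetric in their two arguments and $\diff_y(p,p')\ge0$ holds in either case, so one argument --- or, if one prefers, a trivial interchange of the roles of $p$ and $p'$ in the second case --- handles both. Note also that only the single identity $\diff_w(p,p')=0$ relating the four rank values at $p$, $p'$ and the two corners is used; no finer structure of $w$ is needed.
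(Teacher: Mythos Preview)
Your proof is correct and follows essentially the same approach as the paper: both arguments fix $p\notin B$, choose the witness $p'\in A$, and chain the inequalities $\diff_y(p,p')\ge0$, $\rk_y\ge\rk_w$ on $\crnrs(p,p')\subset B$, $\diff_w(p,p')=0$, and $\rk_y(p')=\rk_w(p')$ to conclude. The paper compresses this into a single displayed chain $\rk_y(p)+\rk_y(p')\ge\sum_{q\in\crnrs(p,p')}\rk_y(q)\ge\sum_{q\in\crnrs(p,p')}\rk_w(q)=\rk_w(p)+\rk_w(p')=\rk_w(p)+\rk_y(p')$, but the content is identical.
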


\begin{proof}
Let $p\in C_A^w(B)$. If $p\in B$ there is nothing to prove.
Otherwise, let $p'\in A$ be such that $p'\cmpr p$, $\diff_w(p,p')=0$ and $\crnrs(p,p')\subset B$.
Then,
\[
\rk_y(p)+\rk_y(p')\ge\sum_{q\in\crnrs(p,p')}\rk_y(q)\ge\sum_{q\in\crnrs(p,p')}\rk_w(q)
=\rk_w(p)+\rk_w(p')=\rk_w(p)+\rk_y(p').
\]
Hence, $\rk_y(p)\ge\rk_w(p)$ as required.
\end{proof}

For any subset $A\subset\bsqr$ we define inductively $A_0=A$ and $A_k=C_A^w(A_{k-1})$, $k>0$.
The set $\infl^w(A)=\cup_{k\ge0}A_k$ is called the \emph{influence set} of $A$.

Clearly, if $A'\supset A$ then $\infl^w(A')\supset\infl^w(A)$.

\begin{corollary} \label{cor: scor}
Let $w\in S_n$.
For any $y\in S_n$ we have $\rk_y\ge\rk_w$ on
\[
\infl^w(\{(i,j)\in\bsqr:\rk_w(i,j)=\rk_y(i,j)\}).
\]
\end{corollary}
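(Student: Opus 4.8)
The plan is to obtain Corollary \ref{cor: scor} as a direct iteration of the preceding lemma, by induction on the stage index in the construction of the influence set. First I would put
\[
A=\{(i,j)\in\bsqr:\rk_w(i,j)=\rk_y(i,j)\},
\]
so that the hypothesis $\rk_y\rest_A\equiv\rk_w\rest_A$ of the lemma holds by the very definition of $A$; in particular $\rk_y\ge\rk_w$ on $A=A_0$.

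For the inductive step, suppose $\rk_y\ge\rk_w$ on $A_{k-1}$. Apply the lemma with this $A$ and with $B=A_{k-1}$: both hypotheses are met (the equality on $A$ is built in, and $\rk_y\ge\rk_w$ on $B$ is the inductive assumption), so the conclusion gives $\rk_y\ge\rk_w$ on $C_A^w(A_{k-1})=A_k$. Hence $\rk_y\ge\rk_w$ on $A_k$ for every $k\ge0$, and since $\infl^w(A)=\bigcup_{k\ge0}A_k$ by definition, taking the union over $k$ yields $\rk_y\ge\rk_w$ on $\infl^w(A)$, which is exactly the asserted statement.

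There is no real obstacle here: all the combinatorial content sits in the lemma, and the corollary is just its transitive closure along the chain $A_0\subset A_1\subset\cdots$. The one thing to keep straight is that the set $A$ playing the role of the ``equality locus'' in the lemma is held fixed throughout the induction while only $B$ is enlarged — which is precisely the shape of the recursion defining $A_k=C_A^w(A_{k-1})$ — so the equality hypothesis $\rk_y\rest_A\equiv\rk_w\rest_A$ is verified once and never needs to be revisited.
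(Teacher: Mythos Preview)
Your argument is correct and is exactly the intended derivation: the corollary is stated without proof in the paper precisely because it follows by iterating the preceding lemma along the chain $A_0\subset A_1\subset\cdots$, just as you wrote.
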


\begin{definition}
We say that $\pair=(w,x)\in\pairset$ is \emph{influential} if $\infl^w(\X_\pair)=\bsqr$.
\end{definition}
By Corollary \ref{cor: scor}, every influential pair is a tight pair.

We will in fact prove the following stronger version of Theorem \ref{thm: main}.
\begin{theorem} \label{thm: main'}
Every smooth pair is influential.
\end{theorem}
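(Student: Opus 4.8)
I would prove Theorem~\ref{thm: main'} by induction on $\ell(\pair)$. The base case $\ell(\pair)=0$ is trivial since then $\X_\pair=\bsqr$. For the inductive step, suppose $\pair=(w,x)$ is smooth with $\ell(\pair)>0$. The idea is to produce a transposition $t\in\RRs_\pair$ (which is nonempty since $\ell(\pair)>0$) such that the smaller pair $\pair_t=(w,xt)$ is again smooth---this should follow from smoothness of $\pair$ together with the characterization $t\in\RRs_\pair$ plus the combinatorial machinery around \eqref{eq: tijR} and \eqref{eq: 1234}---and then to deduce $\infl^w(\X_\pair)=\bsqr$ from the inductive hypothesis $\infl^w(\X_{\pair_t})=\bsqr$. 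The latter deduction is the heart of the matter: one must show $\infl^w(\X_\pair)\supseteq\X_{\pair_t}$, i.e., that every point where $\rk_{xt}=\rk_w$ lies in the influence set of the (smaller) level set of $\pair$.

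\textbf{Controlling the difference of level sets.} Since $x\cvr xt$ with $t=t_{i,k}$, we have $\Xc_{(xt,x)}=[\pnt_x(i),\pnt_x(k))$ and this region is contained in $\Xc_\pair$ by \eqref{eq: tijR}. The set $\X_{\pair_t}\setminus\X_\pair$ consists of points $p$ where $\rk_{xt}(p)=\rk_w(p)$ but $\rk_x(p)>\rk_w(p)$; such $p$ must lie in the rectangle $[\pnt_x(i),\pnt_x(k))$, and in fact $\rk_x(p)-\rk_{xt}(p)=1$ there. So I need to show each such $p$ is reached from $\X_\pair$ by repeated applications of the closure operator $C_{\X_\pair}^w$. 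The natural strategy is to find, for each such $p$, a partner $p'\in\X_\pair$ with $p\cmpr p'$, $\diff_w(p,p')=0$, and $\crnrs(p,p')$ already in the influence set; concretely one looks for $p'$ among the corners of the rectangle $[\pnt_x(i),\pnt_x(k))$ or nearby lattice points forced to be in $\X_\pair$ by smoothness. Here the simple fact \eqref{sf} and the hypothesis that $\#\RR_\pair=\ell(\pair)$ (so there is no ``excess'' crossing structure) should be what makes the relevant $\diff_w$ vanish.

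\textbf{Where the symmetries enter.} I expect the choice of $t\in\RRs_\pair$ to matter: a generic covering transposition need not give a smooth $\pair_t$, so one wants an extremal one---e.g. take $t=t_{i,j}$ with $j$ minimal, or use the $*$ and ${}^{-1}$ symmetries (which preserve smoothness, tightness, and influence by the displayed identities in \S2.2) to reduce to a normalized situation, say where $xt=xs$ for a simple reflection $s$, or where $x$ is suitably extremal. The symmetry observations $\infl^{w^*}(\X_{\pair^*})=\infl^w(\X_\pair)^*$ and likewise for inversion (which follow formally from $\diff_{w^*}(p^*,q^*)=\diff_w(p,q)$ and $\crnrs(p^*,q^*)=\crnrs(p,q)^*$) let us transport the conclusion back.

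\textbf{Main obstacle.} The hard part will be the second paragraph: showing that the points in $\X_{\pair_t}\setminus\X_\pair$ are all swept into $\infl^w(\X_\pair)$. Unlike the $x=e$ case of \cite{MR1934291}, the level set $\X_\pair$ need not contain the essential set $\ess(w)$ (as the $35142$ example shows), so one cannot simply invoke Fulton's result; one must genuinely track how the closure operator propagates rank equalities across the rectangle $[\pnt_x(i),\pnt_x(k))$, and verify that smoothness of $\pair$ supplies enough coincidences $\diff_w(p,p')=0$ to keep the iteration going. I would organize this as a sequence of lemmas about the geometry of $\X_\pair$ inside and around that rectangle---in particular identifying which corners of sub-rectangles lie in $\X_\pair$---and expect the bookkeeping to be the bulk of the work.
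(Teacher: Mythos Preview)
There is a genuine gap in the transitivity step. You assert that from $\X_{\pair_t}\subset\infl^w(\X_\pair)$ together with the inductive hypothesis $\infl^w(\X_{\pair_t})=\bsqr$ one can deduce $\infl^w(\X_\pair)=\bsqr$. But the operator $C_A^w(B)$ requires the witness $p'$ to lie in the \emph{fixed} base set $A$, not in the growing set $B$. Concretely, in the chain
\[
\rk_y(p)+\rk_y(p')\ge\sum_{q\in\crnrs(p,p')}\rk_y(q)\ge\sum_{q\in\crnrs(p,p')}\rk_w(q)=\rk_w(p)+\rk_w(p')
\]
one needs $\rk_y(p')=\rk_w(p')$ to cancel and obtain $\rk_y(p)\ge\rk_w(p)$; the inequality $\rk_y(p')\ge\rk_w(p')$ (which is all that $p'\in\infl^w(\X_\pair)$ gives) goes the wrong way. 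Thus even if every point of $\X_{\pair_t}\setminus\X_\pair$ is swept into $\infl^w(\X_\pair)$, the subsequent iteration that builds $\infl^w(\X_{\pair_t})=\bsqr$ uses witnesses from $\X_{\pair_t}$, and there is no mechanism to replace these by witnesses in the smaller set $\X_\pair$. In short, $A\subset B\subset\infl^w(A)$ does \emph{not} imply $\infl^w(B)\subset\infl^w(A)$.

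The paper sidesteps this by proving a pointwise statement (Proposition~\ref{prop: main}): for \emph{every} $p\in\Xc_\pair$, not only those with $\rk_\pair(p)=1$, a certain condition $\qrp_\pair(p)$ holds which, using only witnesses in $\X_\pair$, places $p$ within two applications of $C_{\X_\pair}^w$ of a point of strictly smaller $\rk_\pair$-value. Theorem~\ref{thm: main'} then follows by induction on the value $\rk_\pair(p)$ (showing $\X_\pair^{\le i}\subset\infl^w(\X_\pair)$), not on $\ell(\pair)$. Induction on $\ell(\pair)$ is used instead to prove Proposition~\ref{prop: main}, and there the passage from $\pair_t$ to $\pair$ is exactly the difficulty of converting a witness in $\X_{\pair_t}$ into one in $\X_\pair$; this is the substance of \S\ref{sec: bcase}--\S\ref{sec: propmain}. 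Your ``main obstacle'' paragraph is essentially the paper's Proposition~\ref{prop: main1'}, which treats only the critical set (where $\rk_\pair=1$) and is already nontrivial; the full statement needs considerably more.
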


\subsection{Statement of main technical result} \label{sec: defbase}
In order to prove Theorem \ref{thm: main'} we introduce more notation and terminology.

\begin{definition}
Let $\pair=(w,x)\in\pairset$ and $p=(i,j)\in\smsqr$.
\begin{enumerate}
\item Define $\nextd{p}_\pair=(i',j)\in\bsqr$ by
\[
i'=\max\{l\ge i:(l,j')\in\X_\pair\text{ and }\diff_w(p,(l,j'))=0\}\text{ where }
j'=\min\{k>j:(i,k)\in\X_\pair\}.
\]
Similarly, define $\nextu{p}_\pair=(i',j)\in\bsqr$ by
\[
i'=\min\{l\le i:(l,j')\in\X_\pair\text{ and }\diff_w(p,(l,j'))=0\}\text{ where }
j'=\max\{k<j:(i,k)\in\X_\pair\}.
\]
\item We say that the condition $\prpd_\pair(p)$ (resp., $\prpu_\pair(p)$) is satisfied if
$\rk_\pair(\nextd{p}_\pair)<\rk_\pair(p)$ (resp., $\rk_\pair(\nextu{p}_\pair)<\rk_\pair(p)$).
\item We write
\[
\qrp_\pair(p)=\prpd_\pair(p)\lor\prpu_\pair(p)\lor\prpu_\pair(\nextd{p}_\pair)\lor\prpd_\pair(\nextu{p}_\pair).
\]
Thus, by definition the condition $\qrp_\pair(p)$ holds if at least one of the conditions $\prpd_\pair(p)$, $\prpu_\pair(p)$,
$\prpu_\pair(\nextd{p}_\pair)$ or $\prpd_\pair(\nextu{p}_\pair)$ is satisfied.
\end{enumerate}
\end{definition}

Note that the conditions $\prpd_\pair(p)$ and $\prpu_{\pair^*}(p^*)$ are equivalent, as are the conditions $\qrp_\pair(p)$ and $\qrp_{\pair^*}(p^*)$.
However, the latter are not equivalent to $\qrp_{\pair^{-1}}(p^{-1})$.

The key technical result is the following.
\begin{proposition} \label{prop: main}
Suppose that $\pair=(w,x)\in\pairset$ is a smooth pair. Then $\qrp_\pair(p)$ holds for any $p\in\Xc_\pair$.
\end{proposition}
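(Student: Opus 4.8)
The plan is to argue by contradiction. Suppose $p=(i_0,j_0)\in\Xc_\pair$ violates $\qrp_\pair(p)$, so each of the four conditions $\prpd_\pair(p)$, $\prpu_\pair(p)$, $\prpu_\pair(\nextd{p}_\pair)$, $\prpd_\pair(\nextu{p}_\pair)$ fails; I will deduce $\#\RR_\pair>\ell(\pair)$, contradicting the smoothness of $\pair$.

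First I would record the rigid local picture forced by these failures. Put $j_1=\min\{k>j_0:(i_0,k)\in\X_\pair\}$ and $\nextd{p}_\pair=(i_1,j_0)$, so that $(i_0,j_1),(i_1,j_1)\in\X_\pair$ and $\diff_w(p,(i_1,j_1))=0$ by construction. Telescoping $\rk_\pair$ down column $j_0$ and down column $j_1$ from row $i_0$ to row $i_1$ and subtracting the two sums — using that the conditions $w(u)\le j_0$ and $w(u)\le j_1$ are equivalent for $i_0<u\le i_1$, since $\diff_w(p,(i_1,j_1))=0$ — one gets $\rk_\pair(\nextd{p}_\pair)-\rk_\pair(p)=-\diff_x(p,(i_1,j_1))$. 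Hence $\rk_\pair(\nextd{p}_\pair)\le\rk_\pair(p)$ always, and $\neg\prpd_\pair(p)$ amounts to $\diff_x(p,(i_1,j_1))=0$, which together with $\diff_w(p,(i_1,j_1))=0$ says that both $\grph_w$ and $\grph_x$ avoid the region $(p,(i_1,j_1)]$. The same computation — transported through the symmetry $\pair\mapsto\pair^*$ and with the base point shifted to $\nextd{p}_\pair$ or to $\nextu{p}_\pair$ — handles $\neg\prpu_\pair(p)$, $\neg\prpu_\pair(\nextd{p}_\pair)$ and $\neg\prpd_\pair(\nextu{p}_\pair)$, and repeated application of the simple fact \eqref{sf} propagates membership in $\X_\pair$ to further corners. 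The outcome is an explicit configuration: five points $p$, $\nextd{p}_\pair$, $\nextu{p}_\pair$, $\nextu{(\nextd{p}_\pair)}_\pair$, $\nextd{(\nextu{p}_\pair)}_\pair$, all at the common level $m:=\rk_\pair(p)\ge 1$, glued along rectangles whose remaining corners lie in $\X_\pair$ and whose lower-right parts contain no graph point of $w$ or of $x$.

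Second I would convert this configuration into reflections. By \eqref{eq: tijR}, a transposition $t_{a,b}$ lies in $\RR_\pair$ exactly when the ascending box $[\pnt_x(a),\pnt_x(b))$ of $x$ is contained in $\Xc_\pair$. The equality $\rk_\pair(p)=m$ furnishes at least $m$ values $v\le j_0$ with $x^{-1}(v)\le i_0<w^{-1}(v)$, and the absence of graph points in the surrounding strips forces each such $v$ to pair off with a value $v'$ so that $t_{\{x^{-1}(v),x^{-1}(v')\}}$ is such an ascending box sitting inside $\Xc_\pair$; this yields $m$ distinct elements of $\RR_\pair$ ``concentrated at $p$''. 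Running the identical recipe at $\nextd{p}_\pair$ and at $\nextu{p}_\pair$ — legitimate because $\neg\prpu_\pair(\nextd{p}_\pair)$ and $\neg\prpd_\pair(\nextu{p}_\pair)$ keep these points at level $m$ while placing them in a different row and column than $p$ — produces at least one reflection not among the previous $m$; the implication \eqref{eq: 1234} is the tool for tracking which boxes coincide. Together with the $\ell(\pair)$ reflections guaranteed by \eqref{eq: basicne} (built in \S\ref{sec: gash} via Gasharov's construction, which one arranges so as to avoid the ``new'' reflection) this gives $\#\RR_\pair\ge\ell(\pair)+1$, the desired contradiction.

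I expect the heart of the difficulty to be this second step. Each of the four conditions is local, but its negation governs a genuinely two-dimensional region, and the argument has to be organised so that the reflections extracted from the five level-$m$ points are simultaneously (i) provably in $\RR_\pair$, (ii) pairwise distinct, and (iii) not already among the $\ell(\pair)$ reflections delivered by \eqref{eq: basicne}. A secondary, but unavoidable, nuisance is the boundary behaviour: when $\nextd{p}_\pair$, $\nextu{p}_\pair$ or their iterates reach the frame $\bsqr\setminus\smsqr$ — for instance $i_1=n$ or $j_{-1}=0$ — several of the rectangles degenerate and must be treated on their own; the symmetries $\pair\mapsto\pair^*$ and $\pair\mapsto\pair^{-1}$ cut this casework down but do not eliminate it.
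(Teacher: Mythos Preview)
Your proposal has a genuine gap in the second step. The counting argument you describe for deriving $\#\RR_\pair>\ell(\pair)$ does not close. Gasharov's inequality \eqref{eq: basicne} is an \emph{inductive} bound on the cardinality of $\RR_\pair$; it does not hand you a specific list of $\ell(\pair)$ transpositions against which your ``new'' reflection can be compared. So the phrase ``the $\ell(\pair)$ reflections guaranteed by \eqref{eq: basicne} \dots which one arranges so as to avoid the new reflection'' has no content: there is nothing to arrange. Your $m$ reflections ``concentrated at $p$'' and the extra one coming from $\nextd{p}_\pair$ or $\nextu{p}_\pair$ are simply elements of $\RR_\pair$, and exhibiting $m+1$ of them says nothing unless $m+1>\ell(\pair)$, which you have no reason to expect. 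Even the construction of the $m$ reflections themselves is only sketched: you assert that each $v$ ``pairs off'' with some $v'$ so that the resulting transposition lies in $\RR_\pair$, but you do not say how $v'$ is chosen or why the box $[\pnt_x(a),\pnt_x(b))$ sits inside $\Xc_\pair$.

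The paper's argument is entirely different and does not attempt a direct contradiction of smoothness. It proceeds by induction on $\ell(\pair)$: one first treats the special case $p\in\crit_\pair$ (Proposition~\ref{prop: main1'}), where smoothness enters decisively through Corollary~\ref{cor: smth} (surjectivity of Gasharov's map $\phi^\pair_{t'}$). For the general case one picks $t\in\RRs_\pair$ with $\pnt_x(i_1)\le\nextd{p}_\pair$ but $\pnt_x(i_2)\not\le\nextd{p}_\pair$ (Lemma~\ref{lem: simptau}), applies the induction hypothesis to $\pair_t$, and then runs a lengthy case analysis (Lemmas~\ref{lem: specase12}--\ref{lem: caseD}) according to the position of $p$ and $\nextd{p}_\pair$ relative to the rectangle $[\pnt_x(i_1),\pnt_x(i_2))$. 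Smoothness is used only through the base case and the induction hypothesis, not by counting reflections at $p$.
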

We will prove Proposition \ref{prop: main} in sections \ref{sec: bcase} and \ref{sec: propmain} below.
Before that, let us explain how Proposition \ref{prop: main} implies Theorem \ref{thm: main'} (and hence Theorem \ref{thm: main}).

Let $\pair=(w,x)\in\pairset$ be a smooth pair and let
\[
\X_\pair^{\le i}=\{p\in\bsqr:\rk_\pair(p)\le i\}, \ \ i=0,1,\dots,n.
\]
Clearly, $\X_\pair^{\le 0}=\X_\pair$ and $\X_\pair^{\le n}=\bsqr$.
We show by induction on $i$ that $\X_\pair^{\le i}\subset\infl^w(\X_\pair)$.
For the induction step, let $p\in\smsqr$ with $\rk_\pair(p)=i>0$. By Proposition \ref{prop: main} $\qrp_\pair(p)$ holds.
If $\prpd_\pair(p)$ or $\prpu_\pair(p)$ is satisfied then it immediately follows from the definitions that
$p\in C_{\X_\pair}^w(\X_\pair^{\le i-1})$.
Otherwise, $\prpu_\pair(\nextd{p}_\pair)$ or $\prpd_\pair(\nextu{p}_\pair)$ holds and then respectively
$\nextd{p}_\pair\in C_{\X_\pair}^w(\X_\pair^{\le i-1})$ or $\nextu{p}_\pair\in C_{\X_\pair}^w(\X_\pair^{\le i-1})$.
Either way $p\in C_{\X_\pair}^w(C_{\X_\pair}^w(\X_\pair^{\le i-1}))$ and the induction hypothesis implies that
$p\in\infl^w(\X_\pair)$ as required.

We end this section by commenting on the proof of Proposition \ref{prop: main}.
We first consider an important special case in \S\ref{sec: bcase} where a stronger statement holds.
It is proved by induction on $\ell(\pair)$.
It is here that we use the smoothness of $\pair$ in a crucial way. 
The general case is proved in \S\ref{sec: propmain}, also by induction on $\ell(\pair)$.
The induction step is facilitated by the special case considered in \S\ref{sec: bcase}.
However, the smoothness assumption is no longer used directly -- only through the induction hypothesis
and the appeal to the special case.
Unfortunately, the argument is complicated by the four-headed and non-symmetric nature of property $\qrp_\pair(p)$,
which leads to a lengthy case-by-case analysis.

\section{Basic case} \label{sec: bcase}
In this section we carry out the first step of the proof of Proposition \ref{prop: main}.
\subsection{Statement} \label{sec: statebase}
\begin{definition} \label{def: prp}
For any $\pair=(w,x)\in\pairset$ and $p=(i,j)\in\Xc_\pair$ let
\begin{align*}
\rdshft p_\pair&=(\min\{l>i:(l,j)\in \X_\pair\},\min\{l>j:(i,l)\in \X_\pair\})>p,\\
\lushft p_\pair&=(\max\{l<i:(l,j)\in \X_\pair\},\max\{l<j:(i,l)\in \X_\pair\})<p,
\end{align*}
so that $\crnrs(p,\rdshft p_\pair),\crnrs(p,\lushft p_\pair)\subset\X_\pair$.
We say that $\rshft\prp_\pair(p)$ (resp., $\lshft\prp_\pair(p)$) is satisfied if
\begin{equation} \label{eq: rkcond}
\diff_x(p,\rdshft p_\pair)=1\text{ (resp., }\diff_x(p,\lushft p_\pair)=1).
\end{equation}
Finally, we write
\[
\prp_\pair(p)=\rshft\prp_\pair(p)\lor\lshft\prp_\pair(p).
\]
\end{definition}

The condition $\rshft\prp_\pair(p)$ turns out to be stronger than the condition $\prpd_\pair(p)$ defined in \S\ref{sec: defbase}
(see Lemma \ref{lem: prpdp} below).
At any rate, we will not use the notation of  \S\ref{sec: defbase} in this section.

Let $p\in\Xc_\pair$ and $p'=\rdshft p_\pair$. Since $\crnrs(p,p')\subset\X_\pair$, we have
\[
\diff_x(p,p')=\diff_w(p,p')+\rk_\pair(p)+\rk_\pair(p').
\]
Thus,
\begin{equation} \label{eq: equrpop}
\text{$\rshft\prp_\pair(p)$ holds if and only if $\rk_\pair(p)=1$, $\rdshft p_\pair\in\X_\pair$ and $\diff_w(p,\rdshft p_\pair)=0$.}
\end{equation}
Similarly for $\lshft\prp_\pair(p)$ where $\rdshft p_\pair$ is replaced by $\lushft p_\pair$.

Note that the conditions $\rshft\prp_\pair(p)$, $\lshft\prp_{\pair^*}(p^*)$ and $\rshft\prp_{\pair^{-1}}({p^{-1}})$ are equivalent.

Consider the \emph{critical set}
\begin{equation} \label{def: critset}
\crit_\pair=\bigcup_{t\in\RR_\pair}(\X_{\pair_t}\setminus \X_{(xt,x)})=
\bigcup_{t\in\RR_\pair}\X_{\pair_t}\setminus \X_\pair.
\end{equation}
Note that if $p\in\crit_\pair$ then $\rk_\pair(p)=1$. (However, the converse is not true in general, even if $\pair$ is smooth.)
Also, $\crit_{\pair^*}=\crit_\pair^*$ and $\crit_{\pair^{-1}}=\crit_\pair^{-1}$.

The main result of this section is the following.
\begin{proposition} \label{prop: main1'}
Let $\pair=(w,x)\in\pairset$ be a smooth pair.
Then for any $p\in\crit_\pair$:
\begin{enumerate}
\item \label{part: mainr} $\prp_\pair(p)$ is satisfied.
\item \label{part: suppl} If $\diff_w(p,\rdshft p_\pair)=0$ then $\rshft\prp_\pair(p)$ holds (or, equivalently, $\rdshft p_\pair\in\X_\pair$).
\item Similarly, if $\diff_w(p,\lushft p_\pair)=0$ then $\lshft\prp_\pair(p)$ holds (or, equivalently, $\lushft p_\pair\in\X_\pair$).
\end{enumerate}
\end{proposition}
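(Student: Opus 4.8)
The plan is to prove Proposition~\ref{prop: main1'} by induction on $\ell(\pair)$, closely following the structure announced in the commentary at the end of \S\ref{sec: defbase}: the base case $\ell(\pair)=0$ is vacuous since then $\RR_\pair=\emptyset$ and $\crit_\pair=\emptyset$. For the inductive step, fix a smooth pair $\pair=(w,x)$ and a point $p\in\crit_\pair$. By definition of $\crit_\pair$ there is some $t=t_{a,b}\in\RR_\pair$ with $p\in\X_{\pair_t}\setminus\X_\pair$, so $\rk_\pair(p)=1$ while $\rk_{\pair_t}(p)=0$; concretely, since $\pair_t=(w,xt)$, the point $p$ lies in $\Xc_{(xt,x)}=[\pnt_x(a),\pnt_x(b))$, and in fact $p$ must be the unique point of that interval not yet in $\X_\pair$ to the extent forced by $\rk_x$ differing from $\rk_{xt}$ by exactly $1$ there. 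The first task is to understand the combinatorial position of $p$ relative to $\pnt_x(a)$ and $\pnt_x(b)$: because $\Xc_{(xt_{a,b},x)}=[\pnt_x(a),\pnt_x(b))$ and $\rk_{(xt,x)}$ is the indicator-type function counting whether one has passed $\pnt_x(a)$ but not $\pnt_x(b)$, the set $\Xc_{(xt,x)}$ is the ``rectangle'' strictly dominating $\pnt_x(a)$ in the first coordinate and weakly, with $\pnt_x(a)<p$ and $p<\pnt_x(b)$ in the $<$-sense; so $\rdshft p_\pair$ and $\lushft p_\pair$ are controlled by where $\X_\pair$ sits inside this rectangle.

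Next I would apply the induction hypothesis to the strictly shorter smooth pair $\pair_t$. Here I must first check $\pair_t$ is again smooth: from \eqref{eq: basicne} and the smoothness of $\pair$, together with the relation between $\RR_\pair$, $\RR_{\pair_t}$ and $\RRs_\pair$ (removing $t\in\RRs_\pair$ drops $\ell$ by exactly $1$ and drops $\#\RR$ by at least $1$), one gets $\#\RR_{\pair_t}\le\ell(\pair_t)$, hence equality — though one should be slightly careful that $t$ may be chosen in $\RRs_\pair$ rather than merely in $\RR_\pair$, using the last displayed remark before ``For any $t\in\RR_\pair$'' in \S2.1 to reduce to that case. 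Then $p\in\X_{\pair_t}$, so $p\notin\crit_{\pair_t}$ and the induction hypothesis does not directly apply to $p$; instead, the relevant comparison is between $\X_\pair$ and $\X_{\pair_t}$, which differ only inside $[\pnt_x(a),\pnt_x(b))$. The idea is that passing from $\X_{\pair_t}$ to $\X_\pair$ removes exactly a ``diagonal chain'' of points (those $q$ with $\rk_\pair(q)=1$ forced by $t$), and $p$ is one endpoint-adjacent member of that chain; one then reads off $\rdshft p_\pair$ and $\lushft p_\pair$ by locating the nearest surviving level-set points, and uses the simple fact \eqref{sf} repeatedly to propagate vanishing of $\diff_w$ and membership in $\X_\pair$ along corners. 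The needed inputs are \eqref{eq: equrpop} (the clean reformulation of $\rshft\prp_\pair$ as ``$\rk_\pair(p)=1$, $\rdshft p_\pair\in\X_\pair$, $\diff_w(p,\rdshft p_\pair)=0$'') and the self-duality of the statement under $\pair\mapsto\pair^*$ and $\pair\mapsto\pair^{-1}$, which lets me treat $\rshft\prp$ and $\lshft\prp$ symmetrically and halve the casework.

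For part~(1), I expect the argument to split according to whether the interval $[\pnt_x(a),\pnt_x(b))$ meets $\X_\pair$ ``above'' $p$ or ``below'' $p$ along the relevant antidiagonal; in one case $\rdshft p_\pair$ lands in $\X_\pair$ with $\diff_w(p,\rdshft p_\pair)=0$ and $\rshft\prp_\pair(p)$ holds, in the mirror case $\lshft\prp_\pair(p)$ holds, and the point is that at least one of the two must occur because $\pnt_x(a)$ and $\pnt_x(b)$ themselves sit in $\X_\pair$ (as $t\in\RR_\pair\subset\RR_\pair$ forces $\crnrs$-type containments via \eqref{eq: tijR}) and bracket $p$. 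Parts~(2) and~(3) are the ``supplementary'' refinements: given the extra hypothesis $\diff_w(p,\rdshft p_\pair)=0$, one shows $\rdshft p_\pair\in\X_\pair$ directly — essentially because $\rk_\pair$ cannot jump from $1$ at $p$ to something positive at $\rdshft p_\pair$ without a point of $\grph_w$ in the interval $(p,\rdshft p_\pair]$, contradicting $\diff_w=0$, combined with the observation that $\rk_\pair$ decreases by at most $1$ across a single such step. The main obstacle, I expect, is the bookkeeping in the inductive step: correctly identifying which point $p'$ of the interval $[\pnt_x(a),\pnt_x(b))$ the induction gives control over, and verifying that the $\diff_w$-vanishing and corner-containment conditions transfer between $\pair$ and $\pair_t$ — i.e. that the combinatorial ``shapes'' $\rdshft p$, $\lushft p$ computed in $\X_{\pair_t}$ and in $\X_\pair$ are related in the precise way needed to conclude \eqref{eq: rkcond}. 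I anticipate this forces a short but fiddly case analysis on the mutual position of $p$, $\pnt_x(a)$, $\pnt_x(b)$, and the first $\X_\pair$-points encountered in each of the four cardinal directions, handled uniformly by repeated appeals to \eqref{sf} and the symmetries.
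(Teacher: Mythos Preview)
Your induction scheme has a structural gap. You choose $t=t_{a,b}\in\RR_\pair$ with $p\in\X_{\pair_t}$ and then try to apply the induction hypothesis to $\pair_t$; but as you yourself note, $p\notin\crit_{\pair_t}$, so the hypothesis says nothing about $p$. Your proposed workaround---tracking a ``diagonal chain'' in $\X_{\pair_t}\setminus\X_\pair$ and reading off $\rdshft p_\pair$, $\lushft p_\pair$ from nearby surviving points---does not actually produce the conclusion: the set $\X_{\pair_t}\setminus\X_\pair$ is the full rectangle $\{q\in[\pnt_x(a),\pnt_x(b)):\rk_\pair(q)=1\}$, not an antidiagonal chain, and nothing in your outline forces $\rdshft p_\pair\in\X_\pair$ or $\diff_w(p,\rdshft p_\pair)=0$ from information about $\pair_t$. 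Your sketch for part~(2) is also off: $\diff_w(p,\rdshft p_\pair)=0$ constrains $\grph_w$, but $\rk_\pair=\rk_x-\rk_w$ can still be positive at $\rdshft p_\pair$ because of $\grph_x$-points; there is no ``at most $1$'' jump principle of the kind you invoke.

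The paper's proof runs the induction differently and this is where smoothness is genuinely used. Rather than reducing by $t$, one looks for \emph{another} $t'\in\RRs_\pair$ (with indices disjoint from, or overlapping, those of $t$) and reduces to $\pair_{t'}$. The point is that Gasharov's injection $\phi^\pair_{t'}:\RR_{\pair_{t'}}\hookrightarrow\RR_\pair\setminus\{t'\}$ is a \emph{bijection} precisely when $\pair$ is smooth (Corollary~\ref{cor: smth}), so one of $t$ or $t^{t'}$ lies in $\RR_{\pair_{t'}}$; this keeps $p\in\crit_{\pair_{t'}}$ and the induction hypothesis applies to $p$ itself. One then checks that $\rdshft p_{\pair_{t'}}=\rdshft p_\pair$, etc., so the conclusion transfers. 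When no such $t'$ exists (the ``$t$-minimal'' case), the induction bottoms out not at $\ell(\pair)=0$ but at an explicit family of pairs which one classifies completely (Lemmas~\ref{lem: mindredcase} and~\ref{lem: basicbasic}) and verifies the proposition by hand. Your plan misses both the Gasharov-bijectivity step and the nontrivial base case; without them the induction does not close.
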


In the rest of the section we will prove Proposition \ref{prop: main1'} by induction on $\ell(\pair)$.
The induction step reduces the statement to a special case which will be examined directly.
This reduction step uses the smoothness in a crucial way.
We first explain a purely formal reduction step.

\subsection{Preliminary reduction} \label{sec: softred}
We record some notation and results from \cite[\S5]{MR1990570} adjusted to the setup at hand.

Recall the following standard result (cf. proof of \cite[Theorem 2.1.5]{MR2133266}).
\begin{equation} \label{betii}
\text{Suppose that $\pair=(w,x)\in\pairset$ and $\pnt_x(i)\in\Xc_\pair$.
Then $\exists i'$ such that $t_{i,i'}\in\RR_\pair$.}
\end{equation}

For any subset $I\subset\{1,\dots,n\}$ of size $m$ let $\sigma_I:\{1,\dots,m\}\rightarrow I$ be the monotone bijection
and define $\pi_I:\{0,1,\dots,n\}\rightarrow\{0,1,\dots,m\}$
by $\pi_I(i)=\max\{j:\sigma_I(j)\le i\}$ (with $\max\emptyset=0$), a weakly monotone function.

Fix $\pair=(w,x)\in\pairset$ and let
\[
I=\{i=1,\dots,n:\exists i'\ne i\text{ such that }t_{\{i,i'\}}\in\RR_\pair\}
\]
and $m=\# I$. We say that $\pair$ is \emph{reduced} if $I=\{1,\dots,n\}$.

By \cite[Proposition 14]{MR1990570} and \eqref{betii} we have $\pnt_x(i)=\pnt_w(i)\in\X_\pair$ for all $i\notin I$ and in particular $J:=x(I)=w(I)$.
Let $\trnc\pair=(\pi_J\circ w\circ\sigma_I,\pi_J\circ x\circ\sigma_I)\in S_m\times S_m$ be the \emph{reduction} of $\pair$.
For any $t\in\Trans$ let $\trnc t\in\RR_m\cup\{e\}$ be given by
$\trnc{t_{i,j}}=t_{\pi_I(i),\pi_I(j)}$ if $\pi_I(i)\ne\pi_I(j)$ and $\trnc{t_{i,j}}=e$ otherwise.
Finally, for any $p=(i,j)\in\bsqr$ we write $\trnc p=(\pi_I(i),\pi_J(j))\in\bsqro_m$. (Of course,
$\trnc t$ and $\trnc p$ depend implicitly on $\pair$.)

The following elementary result is essentially in \cite[\S 5]{MR1990570}. We omit the details.
\begin{lemma} \label{lem: elem}
Let $\pair=(w,x)\in\pairset$. Then
\begin{enumerate}
\item $\trnc\pair\in\pairsets_m$.
\item $\ell(\trnc\pair)=\ell(\pair)$.
\item For any $p\in\bsqr$ we have $\rk_\pair(p)=\rk_{\trnc\pair}(\trnc p)$.
In particular, $p\in\X_\pair$ if and only if $\trnc p\in\X_{\trnc\pair}$.
\item For any $p_1,p_2\in\bsqr$ with $p_1\le p_2$ we have $\diff_w(p_1,p_2)\ge\diff_{\pi_J\circ w\circ\sigma_I}(\trnc p_1,\trnc p_2)$
and $\diff_\pair(p_1,p_2)=\diff_{\trnc\pair}(\trnc p_1,\trnc p_2)$.
\item The map $t\mapsto\trnc t$ defines a bijection between $\RR_\pair$ and $\RR_{\trnc\pair}$.
\item For any $t\in\RR_\pair$ and $p\in\bsqr$ we have $p\in\X_{\pair_t}$ if and only if $\trnc p\in\X_{\trnc\pair_{\trnc t}}$.
\item $\trnc\pair$ is smooth if and only if $\pair$ is smooth.
\item $\trnc\pair$ is reduced.
\item $p\in \crit_\pair$ if and only if $\trnc p\in\crit_{\trnc\pair}$.
\item For any $p\in\Xc_\pair$ we have $\rdshft{\trnc p}_{\trnc\pair}=\trnc{\rdshft p_\pair}$ and
$\lushft{\trnc p}_{\trnc\pair}=\trnc{\lushft p_\pair}$.
\end{enumerate}
\end{lemma}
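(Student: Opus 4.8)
The plan is to deduce all ten assertions from the single structural input recorded just before the statement: by \cite[Proposition 14]{MR1990570} and \eqref{betii}, for every $i\notin I$ one has $\pnt_w(i)=\pnt_x(i)\in\X_\pair$, so that $w$ and $x$ agree off $I$ and $w(I)=x(I)=J$ (which also makes $\trnc w=\pi_J\circ w\circ\sigma_I$ and $\trnc x=\pi_J\circ x\circ\sigma_I$ genuine elements of $S_m$, since $w\circ\sigma_I$ and $x\circ\sigma_I$ are bijections $\{1,\dots,m\}\to J$ while $\pi_J|_J$ is a bijection $J\to\{1,\dots,m\}$). First I would record the routine facts about $\sigma_\bullet,\pi_\bullet$: $\pi_I\circ\sigma_I=\mathrm{id}$; $\pi_I$ and $\pi_J$ are weakly monotone surjections onto $\{0,\dots,m\}$; $\pi_J|_J$ inverts $\sigma_J$, so $a\in J\Rightarrow(\pi_J(a)\le\pi_J(j)\iff a\le j)$; and consequently $p\mapsto\trnc p$ maps $\bsqr$ onto $\bsqro_m$. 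The keystone is part (3). Writing the defining count as
\[
\rk_w(i,j)=\#\{u\in I:u\le i,\ w(u)\le j\}+\#\{u\notin I:u\le i,\ w(u)\le j\},
\]
the first summand equals $\rk_{\trnc w}(\trnc p)$ (reindex $u=\sigma_I(v)$ and use monotonicity of $\pi_J$ on $J$), while the second summand, say $f(i,j)$, is unchanged when $w$ is replaced by $x$ because $w=x$ off $I$; subtracting the analogous identity for $x$ gives $\rk_\pair(p)=\rk_{\trnc\pair}(\trnc p)$, whence $p\in\X_\pair\iff\trnc p\in\X_{\trnc\pair}$, and likewise for $\Xc$.

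Parts (1), (4), (5), (6), (8), (9) are then formal. Since $\rk_\pair(p)=\rk_{\trnc\pair}(\trnc p)\ge0$ for all $p$ and $p\mapsto\trnc p$ is onto $\bsqro_m$, one gets $\rk_{\trnc\pair}\ge0$ on $\bsqro_m$, i.e.\ $\trnc\pair\in\pairsets_m$, proving (1). For (4), write $\rk_w(p)=\rk_{\trnc w}(\trnc p)+f(p)$; then $\diff_w(p_1,p_2)-\diff_{\trnc w}(\trnc p_1,\trnc p_2)$ is the $\diff$ of $f$, equal for $p_1\le p_2$ to $\#\{u\notin I:i_1<u\le i_2,\ j_1<w(u)\le j_2\}\ge0$, and it cancels in $\diff_\pair$ because $f$ is common to $w$ and $x$. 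For (5): if $t_{i,j}\in\RR_\pair$ then $i,j\in I$ by the definition of $I$, so $\trnc t\ne e$; a direct check gives $(\trnc x)\trnc t=\pi_J\circ(xt_{i,j})\circ\sigma_I$, so $x<xt_{i,j}\iff x(i)<x(j)\iff\trnc x<(\trnc x)\trnc t$, while $xt\le w\iff(\trnc x)\trnc t\le\trnc w$ by the argument of (1) applied to the cell-wise identity $\rk_{xt}(p)-\rk_w(p)=\rk_{(\trnc x)\trnc t}(\trnc p)-\rk_{\trnc w}(\trnc p)$ (valid since $t$ permutes inside $I$); the map $t\mapsto\trnc t$ is injective because $\pi_I|_I$ is, and surjective because $\trnc{t_{\sigma_I(a),\sigma_I(b)}}=t_{a,b}$. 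Part (6) is the cell-wise identity just used, and (9) follows by combining (3), (5), (6) with surjectivity of $p\mapsto\trnc p$. For (8): given $a\in\{1,\dots,m\}$, pick $i'$ with $t_{\{\sigma_I(a),i'\}}\in\RR_\pair$; then $i'\in I$ and, by (5), $t_{\{a,\pi_I(i')\}}\in\RR_{\trnc\pair}$, so $a$ lies in the index set of $\trnc\pair$.

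The one substantial assertion is (2); granting it, (7) is immediate, since $\pair$ is smooth iff $\#\RR_\pair=\ell(\pair)$ and both $\#\RR_\pair=\#\RR_{\trnc\pair}$ (by (5)) and $\ell(\pair)=\ell(\trnc\pair)$ (by (2)). To prove (2) I would sort the inversions of $w$ by whether both indices lie in $I$, neither does, or exactly one does. The ``both in $I$'' inversions biject with the inversions of $\trnc w$, contributing $\ell(\trnc w)$; the ``neither'' inversions coincide for $w$ and $x$; so $\ell(\pair)-\ell(\trnc\pair)$ equals the difference of the numbers of ``mixed'' inversions of $w$ and of $x$. For these I would invoke the one-line identity that, in any permutation $w$, the number of strands crossing strand $u$ is $u+w(u)-2\rk_w(u,w(u))$. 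When $u\notin I$ this depends only on $w(u)=x(u)$ and on $\rk_w(u,w(u))=\rk_x(u,w(u))$ (the latter because $\pnt_w(u)=\pnt_x(u)\in\X_\pair$), hence is the same for $w$ and $x$; summing over $u\notin I$ counts each ``neither'' inversion twice and each ``mixed'' inversion once, and since the ``neither'' totals already agree, so do the ``mixed'' totals. This gives $\ell(\pair)=\ell(\trnc\pair)$.

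Finally (10). Let $p=(i,j)\in\Xc_\pair$, $l_0=\min\{l>i:(l,j)\in\X_\pair\}$ and $k_0=\min\{k>j:(i,k)\in\X_\pair\}$, so $\rdshft p_\pair=(l_0,k_0)$. Along column $j$ the increment $\rk_\pair(l,j)-\rk_\pair(l-1,j)=[x(l)\le j]-[w(l)\le j]$ vanishes whenever $l\notin I$, so $l\mapsto\rk_\pair(l,j)$ is constant on runs of non-$I$ indices; as it drops from $\rk_\pair(l_0-1,j)\ge1$ to $\rk_\pair(l_0,j)=0$, necessarily $l_0\in I$, and symmetrically (using row $i$) $k_0\in J$. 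Hence $\sigma_I(\pi_I(l_0))=l_0$ and $\sigma_J(\pi_J(k_0))=k_0$, and by (3) together with monotonicity and surjectivity of $\pi_I$ the least $a>\pi_I(i)$ with $\rk_{\trnc\pair}(a,\pi_J(j))=0$ is $a=\pi_I(l_0)$; the column coordinate is treated the same way, giving $\rdshft{\trnc p}_{\trnc\pair}=\trnc{\rdshft p_\pair}$, and the identity $\lushft{\trnc p}_{\trnc\pair}=\trnc{\lushft p_\pair}$ follows verbatim with the inequalities reversed. The main obstacle is (2): it is the only assertion not a formal consequence of the cell-wise rank identity (3), and it is where the crossing-number computation carries the argument.
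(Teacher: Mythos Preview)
Your proof is correct. The paper itself does not give a proof of this lemma: it simply states that the result ``is essentially in \cite[\S5]{MR1990570}'' and writes ``We omit the details.'' So there is no argument in the paper to compare against, and your write-up supplies exactly the elementary verification the author chose to suppress. The key identity $\rk_w(p)=\rk_{\trnc w}(\trnc p)+f(p)$ with $f$ independent of $w$ versus $x$ is the right organizing principle, and your handling of part~(2) via the strand-crossing count $u+w(u)-2\rk_w(u,w(u))$ together with $\pnt_w(u)\in\X_\pair$ for $u\notin I$ is clean and correct; likewise the observation in part~(10) that the first coordinate of $\rdshft p_\pair$ must lie in $I$ (and the second in $J$) because $\rk_\pair(\cdot,j)$ is locally constant off $I$ is exactly what is needed.
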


\begin{lemma} \label{lem: redred}
Let $\pair=(w,x)\in\pairset$ and $p\in\Xc_\pair$ with $\rk_\pair(p)=1$.
Then
\begin{gather*}
\rshft\prp_\pair(p)\iff\rshft\prp_{\trnc\pair}(\trnc p),\ \ \ \lshft\prp_\pair(p)\iff\lshft\prp_{\trnc\pair}(\trnc p)\\
\diff_w(p,\rdshft p_\pair)=0\iff\diff_w(\trnc p,\rdshft{\trnc p}_{\trnc\pair})=0,\ \ \
\diff_w(p,\lushft p_\pair)=0\iff\diff_w(\trnc p,\lushft{\trnc p}_{\trnc\pair})=0.
\end{gather*}
Thus Proposition \ref{prop: main1'} holds for $\trnc\pair$ and $\trnc p$ if and only if it holds for $\pair$ and $p$.
\end{lemma}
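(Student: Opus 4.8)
The plan is to show that every quantity appearing in the statement of Proposition \ref{prop: main1'} — the geometric data $\rdshft p_\pair$, $\lushft p_\pair$, the rank conditions $\diff_x(p,\rdshft p_\pair)=1$, and the auxiliary vanishing $\diff_w(p,\rdshft p_\pair)=0$ — is transported intact by the reduction $p\mapsto\trnc p$, $\pair\mapsto\trnc\pair$, so that the equivalence of Proposition \ref{prop: main1'} for the two is a formal consequence. Everything needed is already packaged in Lemma \ref{lem: elem}; the work is purely bookkeeping about how the monotone maps $\pi_I,\pi_J$ interact with the "next point in $\X_\pair$" operations.

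First I would reconcile the two descriptions of $\rdshft p_\pair$. By Lemma \ref{lem: elem}(10) we already have $\rdshft{\trnc p}_{\trnc\pair}=\trnc{\rdshft p_\pair}$ and $\lushft{\trnc p}_{\trnc\pair}=\trnc{\lushft p_\pair}$, so the corner points match after applying $\trnc{(\cdot)}$. Next, using the formula \eqref{eq: equrpop}, the condition $\rshft\prp_\pair(p)$ is equivalent to the conjunction: $\rk_\pair(p)=1$ (given), $\rdshft p_\pair\in\X_\pair$, and $\diff_w(p,\rdshft p_\pair)=0$. By Lemma \ref{lem: elem}(3), $\rk_\pair(p)=\rk_{\trnc\pair}(\trnc p)$, so the rank-one hypothesis also holds downstairs; and $\rdshft p_\pair\in\X_\pair\iff\trnc{\rdshft p_\pair}\in\X_{\trnc\pair}\iff\rdshft{\trnc p}_{\trnc\pair}\in\X_{\trnc\pair}$ by parts (3) and (10). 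So the whole equivalence $\rshft\prp_\pair(p)\iff\rshft\prp_{\trnc\pair}(\trnc p)$, and likewise the two displayed $\diff_w$ equivalences, reduce to the single claim
\[
\diff_w(p,\rdshft p_\pair)=0\iff\diff_w(\trnc p,\rdshft{\trnc p}_{\trnc\pair})=0,
\]
and its $\lushft{(\cdot)}$ counterpart.

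That claim is the one genuinely non-formal point, and it is where I expect the only friction. The inequality $\diff_w(\trnc p,\trnc q)\le\diff_w(p,q)$ for $p\le q$ from Lemma \ref{lem: elem}(4) immediately gives the "$\Rightarrow$" direction (taking $q=\rdshft p_\pair$, recalling $\trnc q=\rdshft{\trnc p}_{\trnc\pair}$). For "$\Leftarrow$" I would argue as follows: $\diff_w(p,\rdshft p_\pair)$ counts points of $\grph_w$ in the half-open box $(p,\rdshft p_\pair]$, and by construction of $\rdshft p_\pair=(\min\{l>i:(l,j)\in\X_\pair\},\min\{l>j:(i,l)\in\X_\pair\})$ the open box $(p,\rdshft p_\pair)$ meets $\X_\pair$ only possibly on its boundary — in fact $\crnrs(p,\rdshft p_\pair)\subset\X_\pair$. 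Any graph point $(u,w(u))\in\grph_w$ lying strictly inside this box would force, via \eqref{betii} applied at $\pnt_x(u)$ if $u\in I$ — or directly if $u\notin I$, since then $\pnt_w(u)=\pnt_x(u)\in\X_\pair$ contradicts interiority — the index $u$ to lie in $I$, and then $\pi_I$ keeps it strictly between $\pi_I(i)$ and $\pi_I(\text{first coordinate of }\rdshft p_\pair)$; combined with the fact that $w(u)$ likewise stays strictly inside under $\pi_J$ (using $J=w(I)$), one gets a graph point of $\trnc\pair$'s permutation strictly inside $(\trnc p,\rdshft{\trnc p}_{\trnc\pair})$, i.e. $\diff_w(\trnc p,\rdshft{\trnc p}_{\trnc\pair})>0$. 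Contrapositively, $\diff_w(\trnc p,\rdshft{\trnc p}_{\trnc\pair})=0$ implies $\diff_w(p,\rdshft p_\pair)=0$. The $\lushft{(\cdot)}$ statement is identical after applying the $*$-symmetry (or by the mirror argument). Finally, the last sentence of the lemma is then immediate: all three clauses of Proposition \ref{prop: main1'} for $(\pair,p)$ are, clause by clause, equivalent to the corresponding clauses for $(\trnc\pair,\trnc p)$, using in addition Lemma \ref{lem: elem}(9) ($p\in\crit_\pair\iff\trnc p\in\crit_{\trnc\pair}$) and Lemma \ref{lem: elem}(7) (smoothness is preserved).
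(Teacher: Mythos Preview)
Your overall strategy is the paper's: reduce everything to the single equivalence
$\diff_w(p,\rdshft p_\pair)=0\iff\diff_{\trnc w}(\trnc p,\rdshft{\trnc p}_{\trnc\pair})=0$,
get ``$\Rightarrow$'' from Lemma~\ref{lem: elem}(4), and for ``$\Leftarrow$'' split according to whether a hypothetical graph point $\pnt_w(u)\in(p,\rdshft p_\pair]$ has $u\in I$ or $u\notin I$. The $u\in I$ case is fine (such a point survives under $\pi_I,\pi_J$). The gap is in the $u\notin I$ case.

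You assert that ``the open box $(p,\rdshft p_\pair)$ meets $\X_\pair$ only possibly on its boundary'' and use this to say that $\pnt_w(u)=\pnt_x(u)\in\X_\pair$ ``contradicts interiority''. That assertion is false: the definition of $\rdshft p_\pair=(i',j')$ constrains only the column $\{(l,j):i<l<i'\}$ and the row $\{(i,l):j<l<j'\}$ to lie in $\Xc_\pair$; it says nothing about the interior. For instance, with $\pair=(3214,1324)$ and $p=(1,1)$ one has $\rk_\pair(p)=1$, $\rdshft p_\pair=(3,3)$, yet $(2,2)\in\X_\pair$ sits strictly inside. So ``$q\in\X_\pair$ and $q$ is interior'' is not by itself contradictory, and your $u\notin I$ step does not go through as written. (Your argument also silently drops points of $(p,\rdshft p_\pair]$ lying on the far edges $\{i'\}\times(j,j']$ and $(i,i']\times\{j'\}$, which are counted by $\diff_w$.)

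The correct argument for $u\notin I$ (this is what the paper does) uses two more ingredients. First, if $\diff_{\trnc w}(\trnc p,\trnc{\rdshft p_\pair})=0$ then \emph{every} graph point of $w$ in $(p,\rdshft p_\pair]$ has index outside $I$, hence lies in $\grph_x\cap\X_\pair$; in particular $\diff_w(p,q)\le\diff_x(p,q)$, i.e.\ $\diff_\pair(p,q)\ge0$ for any such $q=\pnt_w(u)$. Second, expanding $\diff_\pair(p,q)=\rk_\pair(p)+\rk_\pair(q)-\sum_{c\in\crnrs(p,q)}\rk_\pair(c)=1+0-\rk_\pair(u,j)-\rk_\pair(i,w(u))\ge0$ forces one of $(u,j),(i,w(u))$ to lie in $\X_\pair$. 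Since $x(u)=w(u)>j$, if $(u,j)\in\X_\pair$ then also $(u-1,j)\in\X_\pair$, contradicting the minimality of $i'$; the other corner is handled symmetrically. This is where the hypothesis $\rk_\pair(p)=1$ is genuinely used; your sketch never invokes it in the $u\notin I$ branch.
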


\begin{proof}
Let $p=(i,j)$, $p'=\rdshft p_\pair=(i',j')$, $\trnc\pair=(w',x')$.
Suppose that $\diff_{w'}(\trnc p,\trnc p')=0$ but $\diff_w(p,p')>0$.
Then $\emptyset\ne\grph_w\cap (p,p']\subset\grph_x\cap\X_\pair$.
Let $q=\pnt_w(i_0)\in (p,p']$ (with $i_0\notin I$). Then
$\diff_w(p,q)\le\diff_x(p,q)$, i.e., $\diff_\pair(p,q)\ge0$.
On the other hand, $\rk_\pair(p)=1$ and $q\in\X_\pair$. Hence
$\crnrs(p,q)\cap\X_\pair\ne\emptyset$. However, if say $(i_0,j)\in\X_\pair$ then
$(i_0-1,j)\in\X_\pair$ and hence by the definition of $i'$ we would have $i'<i_0$ in contradiction to the fact that $q\in (p,p']$.
By \eqref{eq: equrpop} we also infer the equivalence of $\rshft\prp_\pair(p)$ and $\rshft\prp_{\trnc\pair}(\trnc p)$.
By symmetry we get the other equivalences.
\end{proof}

Thus, it suffices to prove Proposition \ref{prop: main1'} in the case where $\pair$ is reduced (and smooth).

We make the following simple remark.
\begin{equation} \label{eq: redec}
\text{If $\pair$ is reduced and $x$ is monotone decreasing for $i<i_1$ then $w(i)>x(i)$ for all $i<i_1$.}
\end{equation}
Otherwise, if $w(i)\le x(i)$ then $\rk_w(\pnt_w(i))\ge 1\ge\rk_x(\pnt_w(i))$ by the assumption on $x$.
Hence $w(i)=x(i)$ and $\pnt_w(i)\in\X_\pair$. But then $t_{i,i'}\notin\RR_\pair$ for all $i'$ by \eqref{eq: tijR}.
Since also $t_{i',i}\notin\RR_\pair$ for all $i'$ (by the assumption on $x$) $\pair$ cannot be reduced.

\subsection{Gasharov's map} \label{sec: gash}
Let $\pair=(w,x)\in\pairset$ and $t'\in\RR_\pair$.
In \cite{MR1827861} Gasharov introduced an injective map
\[
\phi^\pair_{t'}:\RR_{\pair_{t'}}\rightarrow\RR_\pair\setminus\{t'\}.
\]
We will use the following variant of \cite{MR1990570}:\footnote{It [loc. cit.] this is defined on a case-by-case basis,
but it amounts to the same formula as given here}
\[
\phi^\pair_{t'}(t)=\begin{cases}t^{t'}&\text{if }t^{t'}\in\RR_\pair,\\t&\text{otherwise.}\end{cases}
\]
The map $\phi^\pair_{t'}$ is well-defined and injective (\cite[Theorem 20]{MR1990570}).
Taking $t'\in\RRs_\pair$, this gives a combinatorial inductive proof of the inequality \eqref{eq: basicne}.
It also yields that
\[
\pair\text{ is smooth $\iff \phi^\pair_{t'}$ is surjective and $\pair_{t'}$ is smooth}.
\]
Hence, if $\pair$ is smooth then $(w,x')$ is smooth for any $x\le x'\le w$.
(Again, this fact is also clear from the geometric characterization.)

\begin{corollary} \label{cor: smth}
Suppose that $\pair=(w,x)$ is smooth, 
$t'\in\RRs_\pair$ and $t\in\RR_\pair$ with $t\ne t'$. Then at least one of $t$ or $t^{t'}$ is in $\RR_{\pair_{t'}}$.
\end{corollary}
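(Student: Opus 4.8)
The plan is to read off the statement directly from the surjectivity of Gasharov's map $\phi^\pair_{t'}$. Since $\pair=(w,x)$ is smooth and $t'\in\RRs_\pair$, the discussion in \S\ref{sec: gash} (based on \cite[Theorem 20]{MR1990570}) tells us that $\phi^\pair_{t'}:\RR_{\pair_{t'}}\to\RR_\pair\setminus\{t'\}$ is a bijection. In particular, given $t\in\RR_\pair$ with $t\ne t'$, there exists $s\in\RR_{\pair_{t'}}$ with $\phi^\pair_{t'}(s)=t$. By the explicit formula for $\phi^\pair_{t'}$, either $\phi^\pair_{t'}(s)=s^{t'}$ (when $s^{t'}\in\RR_\pair$) or $\phi^\pair_{t'}(s)=s$ (otherwise).

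In the first case, $t=s^{t'}$, hence $s=t^{t'}$ (conjugation by $t'$ is an involution), so $t^{t'}=s\in\RR_{\pair_{t'}}$ and we are done. In the second case, $t=s\in\RR_{\pair_{t'}}$ directly, so $t\in\RR_{\pair_{t'}}$ and again we are done. Either way at least one of $t$ or $t^{t'}$ lies in $\RR_{\pair_{t'}}$, which is the claim.

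There is essentially no obstacle here: the corollary is a formal consequence of the surjectivity of $\phi^\pair_{t'}$, which has already been established. The only point requiring a word of care is the bookkeeping around the two branches of the piecewise definition — one must make sure that in the branch $\phi^\pair_{t'}(s)=s^{t'}$ the preimage $s$ is recovered as $t^{t'}$ rather than $t$, which follows because $t\mapsto t^{t'}$ is an involution on $\Trans$. I would state this as a one-line proof.
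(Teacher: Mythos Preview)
Your proof is correct and is exactly the argument the paper intends: the corollary is stated without proof precisely because it follows immediately from the surjectivity of $\phi^\pair_{t'}$ (guaranteed by smoothness and $t'\in\RRs_\pair$) together with the two-branch definition of $\phi^\pair_{t'}$. There is nothing to add.
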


\subsection{Main reduction} \label{sec: smoothred}
\begin{definition}
Let $\pair=(w,x)\in\pairset$ and $t=t_{i_1,i_2}\in\RR_\pair$. We say that $\pair$ is \emph{$t$-minimal} if
for every $t_{i_1',i_2'}\in\RR_\pair$ we have $i_1=i_1'$ or $i_2'=i_2$.
We say that $\pair$ is minimal if it is $t$-minimal for some $t\in\RR_\pair$.
\end{definition}

Note that it is not excluded that $\pair$ is both $t_1$-minimal and $t_2$-minimal for two different
$t_1,t_2\in\RR_\pair$.

Also note that by Lemma \ref{lem: elem} with its notation $\pair$ is $t$-minimal if and only if $\trnc\pair$ is $\trnc t$-minimal.
In particular, $\pair$ is minimal if and only if $\trnc\pair$ is.

\begin{definition}
Let $\pair=(w,x)\in\pair$ be a smooth pair. For any $p\in\Xc_\pair$ let
\[
\RR_\pair(p)=\{t\in\RR_\pair:p\in\X_{\pair_t}\}
\]
so that $p\in\crit_\pair$ if and only if $\RR_\pair(p)\ne\emptyset$. 
Let
\[
\crit^{\red}_\pair=\{p\in \crit_\pair:\pair\text{ is $t$-minimal for every }t\in\RR_\pair(p)\}.
\]
\end{definition}

Note that by Lemma \ref{lem: elem}, given $p\in\Xc_\pair$ and $t\in\RR_\pair$ we have $t\in\RR_\pair(p)$ if and only if
$\trnc t\in\RR_{\trnc p}(\trnc\pair)$. Hence,
\begin{equation} \label{eq: redcrit}
p\in\crit^{\red}_\pair\text{ if and only if }\trnc p\in\crit^{\red}_{\trnc\pair}.
\end{equation}

We will prove below that
\begin{equation} \label{ass: spclcase}
\text{Proposition \ref{prop: main1'} holds for every $\pair=(w,x)\in\pairset$ and $p\in\crit^{\red}_\pair$.}
\end{equation}

Assuming this for the moment, let us prove Proposition \ref{prop: main1'} by induction on $\ell(\pair)$.
\begin{proof}[Proof of Proposition \ref{prop: main1'}]
Let $\pair=(w,x)\in\pairset$ be a smooth pair and $p\in\crit_\pair$.
If $p\in\crit_\pair^{\red}$ then Proposition \ref{prop: main1'} holds by assumption.
Otherwise, there exists $t=t_{i_1,i_2}\in\RR_\pair(p)$ and
$t'=t_{i_1',i_2'}\in\RR_\pair$ such that $i_1'\ne i_1$ and $i_2'\ne i_2$.

Assume first that either $i_1'=i_2$ or $i_2'=i_1$.
Upon upending $\pair$, $p$, $t$ and $t'$ 
if necessary we may assume without loss of generality that $i_1'=i_2$.
Also, by changing $t'$ if necessary (without changing $i_1'$), we may assume that $t'\in\RRs_\pair$.
Note that by our condition on $t$, $t'$ and $p$ we have $\rdshft p_{\pair_{t'}}=\rdshft p_\pair$ and
$\lushft p_{\pair_{t'}}=\lushft p_\pair$.
In particular,
\[
\diff_w(p,\rdshft p_{\pair_{t'}})=0\iff\diff_w(p,\rdshft p_\pair)=0\text{ and }
\diff_w(p,\lushft p_{\pair_{t'}})=0\iff\diff_w(p,\lushft p_\pair)=0.
\]
Also, the conditions $\lshft\prp_{\pair_{t'}}(p)$ and $\lshft\prp_\pair(p)$ are clearly equivalent.
Thus, to conclude Proposition \ref{prop: main1'} for $(\pair,p)$
we will show that by the induction hypothesis Proposition \ref{prop: main1'} holds for $(\pair_{t'},p)$
and that the conditions $\rshft\prp_{\pair_{t'}}(p)$ and $\rshft\prp_\pair(p)$ are equivalent.
It is at this point that we use the smoothness of $\pair$. Namely, by Corollary \ref{cor: smth} we have
$t\in\RR_{\pair_{t'}}$ or $t^{t'}\in\RR_{\pair_{t'}}$.
Upon inverting $\pair$ and $p$ and conjugating $t$ and $t'$ by $x$ 
if necessary we may assume without loss of generality that $t\in\RR_{\pair_{t'}}$.
(Note that $t^{t'}\in\RR_{\pair_{t'}}$ if and only if $t^x\in\RR_{(\pair_{t'})^{-1}}=\RR_{(\pair^{-1})_{t'^x}}$.)
Clearly, $p\in \X_{(\pair_{t'})_t}\setminus \X_{\pair_{t'}}$.
Thus, we may apply the induction hypothesis to $\pair_{t'}$.
Let $p'=\rdshft p_\pair=\rdshft p_{\pair_{t'}}=(i',j')$.
Since $t\in\RR_{\pair_{t'}}$ we must have $j'\ge x(i_2')$.
Hence, $p'\in\X_{(xt',x)}$ and thus $p'\in\X_{\pair_{t'}}\iff p'\in\X_\pair$.
By \eqref{eq: equrpop}
the conditions $\rshft\prp_{\pair_{t'}}(p)$ and $\rshft\prp_\pair(p)$ are equivalent as required.

By \eqref{betii} and the case considered above, we may assume for the rest of the proof that $q:=\pnt_x(i_2)\in\X_\pair$
(and symmetrically $\pnt_x(i_1)-(1,1)\in\X_\pair$).

We now show that $\diff_w(p,\rdshft{p}_\pair)=0$ implies $\rshft\prp_\pair(p)$.
In fact, we show that $\diff_w(p,q)=0$ implies $\rshft\prp_\pair(p)$. (Note that $q\le\rdshft p_\pair$.)
Indeed, if $\diff_w(p,q)=0$ then by \eqref{sf} (applied to $\pair_t$) $\crnrs(p,q)\subset\X_{\pair_t}\cap\X_{(xt,x)}=\X_\pair$
and $\diff_x(p,q)=1$. Hence, from the definition, $\rdshft p_\pair\le q$.
Thus, $\rdshft p_\pair=q$ and $\rshft\prp_\pair(p)$ is satisfied.

By a similar reasoning (or by symmetry) $\diff_w(p,\lushft{p}_\pair)=0$ implies $\lshft\prp_\pair(p)$.

It remains to show that $\prp_\pair(p)$ is satisfied in the case where $t$ and $t'$ commute (and $t'\ne t$).
Once again, by changing $t'$ if necessary, and using the previous case, we may assume that $t'\in\RRs_\pair$.
Then by Corollary \ref{cor: smth} $t\in\RR_{\pair_{t'}}$. Note that $\Xc_{(xt,x)}=\Xc_{(xt't,xt')}$ since $t$ and $t'$ commute.
Hence
\[
p\in\X_{\pair_t}\cap\Xc_{(xt,x)}\subset\X_{(\pair_{t'})_t}\cap\Xc_{(xt't,xt')}=\X_{(\pair_{t'})_t}\setminus\X_{\pair_{t'}}.
\]
Thus, by induction hypothesis $\prp_{\pair_{t'}}(p)$ holds.
By symmetry, we can assume without loss of generality that $\rshft\prp_{\pair_{t'}}(p)$ is satisfied.
Hence, $\diff_w(p,\rdshft p_{\pair_{t'}})=0$.
On the other hand, since $t\in\RR_{\pair_{t'}}$ we have $q\le\rdshft p_{\pair_{t'}}$.
Therefore $\diff_w(p,q)=0$. By the previous paragraph this implies $\rshft\prp_\pair(p)$.
\end{proof}

\subsection{Minimal reduced case}
It remains to consider the minimal reduced case, which we can explicate as follows.
\begin{lemma} \label{lem: mindredcase}
Suppose that $\pair=(w,x)\in\pairset$ is reduced and $t$-minimal.
Write $t=t_{i_1,i_2}$ and $t^x=t_{j_1,j_2}$.
Then $j_2-j_1+i_2-i_1\ge n$ and $\pair$ is given by
\begin{subequations} \label{eq: spclpair}
\begin{equation} \label{eq: shapes}
w(i)=\begin{cases}j_2+1-i&i<i_1,\\n+i_1-i&i_1\le i<n+i_1-j_2,\\n+1-i&n+i_1-j_2\le i\le i_2-j_1+1,\\
i_2+1-i&i_2-j_1+1<i\le i_2,\\n+j_1-i&i>i_2.\end{cases}
\end{equation}
\begin{equation} \label{eq: shapes0}
x(i)=\begin{cases}j_2-i&i<i_1,\\j_1&i=i_1,\\n+i_1+1-i&i_1<i\le n+i_1-j_2,\\
n+1-i&n+i_1-j_2<i\le i_2-j_1,\\i_2-i&i_2-j_1<i<i_2,\\j_2&i=i_2,\\n+j_1+1-i&i>i_2,\end{cases}
\end{equation}
\end{subequations}
\end{lemma}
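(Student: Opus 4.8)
The plan is to pin down $x$ completely first, and $w$ afterwards; the inequality $j_2-j_1+i_2-i_1\ge n$ will surface as the condition under which the forced value assignment for $x$ is self-consistent. Write $t=t_{i_1,i_2}$ and $t^x=t_{j_1,j_2}$. From $t\in\RR_\pair$ one has $j_1=x(i_1)<x(i_2)=j_2$ and, by \eqref{eq: tijR}, the box $[\pnt_x(i_1),\pnt_x(i_2))=\{(a,b):i_1\le a<i_2,\ j_1\le b<j_2\}$ lies in $\Xc_\pair$. The hypothesis that $\pair$ is $t$-minimal means $\RR_\pair\subseteq\{t_{i_1,k}:k>i_1\}\cup\{t_{k,i_2}:k<i_2\}$, while ``reduced'' says every index occurs in some transposition of $\RR_\pair$; together these force $t_{k,i_2}\in\RR_\pair$ for every $k<i_1$, $t_{i_1,k}\in\RR_\pair$ for every $k>i_2$, and, for each $i_1<k<i_2$, at least one of $t_{i_1,k},t_{k,i_2}$.

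First I would show that $x$ is strictly decreasing on each of $[1,i_1-1]$, $[i_1+1,i_2-1]$ and $[i_2+1,n]$. If instead $x(i)<x(j)$ with $i<j$ in one such interval, then $xt_{i,j}>x$ and it suffices to prove $[\pnt_x(i),\pnt_x(j))\subseteq\Xc_\pair$: by \eqref{eq: tijR} this would give $t_{i,j}\in\RR_\pair$, contradicting $t$-minimality since neither $i$ nor $j$ is $i_1$ or $i_2$. For the two extreme intervals this is quick: for $i<j<i_1$ one has $t_{i,i_2},t_{j,i_2}\in\RR_\pair$, hence $x(j)<j_2$, hence $[\pnt_x(i),\pnt_x(j))\subseteq[\pnt_x(i),\pnt_x(i_2))\subseteq\Xc_\pair$, and symmetrically for $i_2<i<j$ using $t_{i_1,i},t_{i_1,j}$. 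For the middle interval I would fix $(a,b)\in[\pnt_x(i),\pnt_x(j))$ and argue by cases on $b$: if $j_1\le b<j_2$ then $(a,b)\in[\pnt_x(i_1),\pnt_x(i_2))\subseteq\Xc_\pair$; if $b\ge j_2$ then $x(j)>j_2$ forces $t_{j,i_2}\notin\RR_\pair$, hence $t_{i_1,j}\in\RR_\pair$ and $(a,b)\in[\pnt_x(i_1),\pnt_x(j))\subseteq\Xc_\pair$; if $b<j_1$ then $x(i)<j_1$ forces $t_{i_1,i}\notin\RR_\pair$, hence $t_{i,i_2}\in\RR_\pair$ and $(a,b)\in[\pnt_x(i),\pnt_x(i_2))\subseteq\Xc_\pair$. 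Applying all of this to the inverted pair $\pair^{-1}$, which is reduced and $t^x$-minimal with the roles of $(i_1,i_2)$ and $(j_1,j_2)$ interchanged, shows in addition that $x^{-1}$ is strictly decreasing on $[1,j_1-1]$, $[j_1+1,j_2-1]$ and $[j_2+1,n]$.

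Next I would exploit this double rigidity: a permutation whose row-strips (cut at $i_1,i_2$) and column-strips (cut at $j_1,j_2$) are all strictly decreasing runs and which passes through $(i_1,j_1)$ and $(i_2,j_2)$ has its graph forced to be a union of anti-diagonal segments. I would determine the values occurring in each row-strip by another application of \eqref{eq: tijR} — for $i<i_1$ the box $[\pnt_x(i),\pnt_x(i_2))$ must lie inside $\Xc_\pair\subseteq\smsqr$, with analogous constraints for $i>i_2$ and the middle strip, and the forbidden transpositions pin down exactly where $\Xc_\pair$ must omit certain boxes — and then fill in the remainder by bijectivity of $x$, arriving at \eqref{eq: shapes0}. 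In that formula the middle strip decomposes into three sub-blocks, the innermost of size $(i_2-i_1)+(j_2-j_1)-n$, whose non-negativity is precisely $j_2-j_1+i_2-i_1\ge n$; so the inequality is forced by the mere existence of $x$. Finally, with $x$ determined, the permutation $w$ follows: it is obtained from $x$ by raising the entry $j_1$ at position $i_1$ to $n$ and lowering the entry $j_2$ at position $i_2$ to $1$ while the intervening blocks shift by one, which is the content of \eqref{eq: shapes}. One pins this down from $x\le w$ together with the relations $\rk_w\le\rk_{xt'}$ for $t'\in\RR_\pair$ (equivalently, from $\Xc_\pair$ and the rank drops along a maximal chain $x\to w$), and checking that the displayed $w$ does satisfy $x\le w$ and recovers the prescribed $\RR_\pair$ is routine.

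The hard part will be this last stage, i.e.\ converting the ``blockwise anti-monotone in both coordinates'' rigidity into the exact value assignment, and above all extracting the inequality $j_2-j_1+i_2-i_1\ge n$ as a genuine feasibility constraint rather than merely checking consistency with the stated formulas. This requires a careful simultaneous bookkeeping of the three row-strips and three column-strips, best organised around the four distinguished points $(i_1,j_1)$, $(i_2,j_2)$ and their neighbours in $\grph_x$, and is the source of the case analysis the paper alludes to.
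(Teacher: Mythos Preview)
Your treatment of $x$ follows essentially the paper's route: monotonicity of $x$ on the three row-blocks (and, via $\pair^{-1}$, on the three column-blocks), then cross-block comparisons via \eqref{eq: tijR} to pin down exact values. The paper makes those cross-block comparisons explicit (e.g.\ $x(i)>x(i')$ for $i<i_1<i_2<i'$, and for $i_1<i<i_2$ with $j_1<x(i)<j_2$ the value $x(i)$ separates the outer blocks), whereas you bundle them into the phrase ``double rigidity''; but the underlying mechanism is the same and your case analysis for the middle block is correct.

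The gap is in the determination of $w$. Knowing $x$, the set $\RR_\pair$, and the inequalities $x\le w$, $xt'\le w$ for $t'\in\RR_\pair$ does \emph{not} by itself force $w$: these only give lower bounds on $w$ in Bruhat order, and many $w$'s dominate all of $x,xt'$. What the paper uses instead is, first, the reducedness consequence \eqref{eq: redec}: since $x$ is strictly decreasing on $[1,i_1-1]$, one gets $w(i)>x(i)$ for every $i<i_1$. This forces $\rk_w(i_1-1,j)<\rk_x(i_1-1,j)$ for $j_2-i_1+1\le j<j_2$, i.e.\ the entire row $i_1-1$ of that range lies in $\Xc_\pair$. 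Second, $t$-minimality now \emph{caps} $w$: if in addition $\rk_\pair(i_1-1,j_2)>0$, then combining this with $t_{i_1-1,i_2},t_{i_1,n+i_1-j_2}\in\RR_\pair$ would manufacture the forbidden transposition $t_{i_1-1,n+i_1-j_2}\in\RR_\pair$. Hence $\rk_w(i_1-1,j_2)=i_1-1$, which together with $w(i)>x(i)=j_2-i$ forces $w(i)=x(i)+1$ for $i<i_1$. Symmetry and a direct check on the middle block (using $t_{i_1,i_2}\in\RR_\pair$) then give \eqref{eq: shapes}. Your sketch (``rank drops along a maximal chain $x\to w$'') does not capture this upper-bound mechanism, and without \eqref{eq: redec} and the $t$-minimality contradiction there is no argument that $w$ is unique.
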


\begin{proof}
Let $t=t_{i_1,i_2}$.
We proceed with the following steps, first dealing with $x$.
\begin{enumerate}
\item $t_{i,i_2}\in\RR_\pair$ for every $i<i_1$. In particular, $x(i)<j_2$ for all $i<i_1$.

Since $\pair$ is $t$-minimal and $i<i_1$ we cannot have $t_{i',i}\in\RR_\pair$ for any $i'$.
Therefore, since $\pair$ is reduced, there exists $i'$ such that $t_{i,i'}\in\RR_\pair$.
Since $\pair$ is $t$-minimal, $i'=i_2$.

\item $x(i)>x(i+1)$ for all $i<i_1$.

Otherwise, the relation $t_{i,i_2}\in\RR_\pair$ would imply $t_{i,i+1}\in\RR_\pair$, contradicting the assumption on $\pair$.

\item In a similar vein, for all $i>i_2$ we have $t_{i_1,i}\in\RR_\pair$, $j_1<x(i)$ and $x(i)<x(i-1)$.

\item For any $i<i_1$ and $i'>i_2$ we have $x(i)>x(i')$.

Otherwise, the relations $t_{i,i_2},t_{i_1,i'}\in\RR_\pair$ would give $t_{i,i'}\in\RR_\pair$ by \eqref{eq: 1234} which denies the assumption on $\pair$.

\item By passing to $\pair^{-1}$ we similarly have $t_{x^{-1}(j),i_2}\in\RR_\pair$ and $x^{-1}(j)>x^{-1}(j+1)$
for all $j<j_1$ while $t_{i_1,x^{-1}(j)}\in\RR_\pair$ and $x^{-1}(j)<x^{-1}(j-1)$ for all $j>j_2$.
Moreover, $x^{-1}(j)>x^{-1}(j')$ for all $j<j_1$ and $j'>j_2$.

\item Suppose that $i_1<i<i_2$ and $j_1<x(i)<j_2$. Then
\begin{enumerate}
\item $x(i')>x(i)$ for all $i'<i_1$.

Otherwise, $t_{i',i}\in\RR_\pair$ (since $t_{i',i_2}\in\RR_\pair$) in violation of the $t$-minimality of $\pair$.

\item Similarly, $x(i')<x(i)$ for all $i'>i_2$ and $x^{-1}(j')<i$
(resp., $x^{-1}(j')>i$) for all $j'>j_2$ (resp., $j'<j_1$).
\end{enumerate}

\item Suppose that $i_1<i<i'<i_2$ and $j_1<x(i),x(i')<j_2$. Then $x(i)>x(i')$.

Otherwise $t_{i,i'}\in\RR_\pair$, refuting the $t$-minimality of $\pair$.
\end{enumerate}
In conclusion, $n+i_1-j_2\le i_2-j_1$ and $x$ is given by \eqref{eq: shapes0}

Next we deal with $w$.
\begin{enumerate}
\item By \eqref{eq: redec} $w(i)>x(i)=j_2-i$ for all $i<i_1$.

\item $\rk_\pair(i_1-1,j)>0$ for all $j_2-i_1+1\le j<j_2$.

Indeed, $\rk_x(i_1-1,j)=j-j_2+i_1$ while $\rk_w(i_1-1,j)<j-j_2+i_1$ by the previous part.

\item $\rk_w(i_1-1,j_2)=i_1-1$.

Otherwise, we would have $\rk_\pair(i_1-1,j_2)>0$. In view of the previous part and the fact that
$t_{i_1-1,i_2}$ and $t_{i_1,n+i_1-j_2}$ are in $\RR_\pair$, this would imply that $t_{i_1-1,n+i_1-j_2}\in\RR_\pair$,
rebutting the $t$-minimality of $\pair$.

\item We conclude that $w(i)=x(i)+1$ for all $i<i_1$.

\item By symmetry $w(i)=x(i)-1$ for all $i>i_2$,
$w^{-1}(j)=x^{-1}(j)+1$ for all $j<j_1$ and $w^{-1}(j)=x^{-1}(j)-1$ for all $j>j_2$.

\item $w(n+i_1-j_2)=j_2-i_1+1$.

Let $j=w(n+i_1-j_2)$. We already have $j_1\le j\le j_2-i_1+1$. If $j\le j_2-i_1$ then $\rk_\pair(n+i_1-j_2,j)=0$,
gainsaying that $t_{i_1,i_2}\in\RR_\pair$.

\item Similarly $w(i_2-j_1+1)=n-i_2+j_1$.

\item $w(i)=x(i)$ for all $n+i_1-j_2<i\le i_2-j_1$.

This follows now again from the fact that $t_{i_1,i_2}\in\RR_\pair$.
\end{enumerate}
The result follows.
\end{proof}

The following assertion is straightforward.
\begin{lemma} \label{lem: basicbasic}
Let $1\le i_1<i_2\le n$ and $1\le j_1<j_2\le n$ be such that $j_2-j_1+i_2-i_1\ge n$ and let $\pair$ be given by
\eqref{eq: shapes}, \eqref{eq: shapes0}.\footnote{It can be easily shown that $\pair$ is smooth, reduced and $t_{i_1,i_2}$-minimal.
We will not use this fact explicitly.}
Then
\begin{enumerate}
\item If $i_1=j_1=1$ and $i_2+j_2=n+2$ then $w(i)=n+1-i$ for all $i$
and $x(i)=n+2-i$ for all $i>1$. We have
\[
\crit^{\red}_\pair=\{(i,j)\in\smsqr:i+j\le n\}
\]
and for any $p=(i,j)\in\crit^{\red}_\pair$
\begin{enumerate}
\item $\lushft p_\pair=(0,0)$.
\item $\lshft\prp_\pair(p)$ is satisfied.
\item $\rdshft p_\pair=(n+1-j,n+1-i)$.
\item $\rshft\prp_\pair(p)\iff\diff_w(p,\rdshft p_\pair)=0\iff i+j=n$.
\end{enumerate}
\item Similarly, if $i_2=j_2=n$ and $i_1+j_1=n$ then $w(i)=n+1-i$ for all $i$
and $x(i)=n-i$ for all $i<n$. We have
\[
\crit^{\red}_\pair=\{(i,j)\in\smsqr:n\le i+j\}
\]
and for any $p=(i,j)\in\crit^{\red}_\pair$
\begin{enumerate}
\item $\rdshft p_\pair=(n,n)$.
\item $\rshft\prp_\pair(p)$ is satisfied.
\item $\lushft p_\pair=(n-1-j,n-1-i)$.
\item $\lshft\prp_\pair(p)\iff\diff_w(p,\lushft p_\pair)=0\iff i+j=n$.
\end{enumerate}
\item In all other cases
\[
\crit^{\red}_\pair=\{(i,n-i):n+i_1-j_2\le i\le i_2-j_1\}
\]
and for any $p\in\crit^{\red}_\pair$ we have $\rdshft p_\pair=(i_2,j_2)$, $\lushft p_\pair=(i_1-1,j_1-1)$
and both conditions $\rshft\prp_\pair(p)$ and $\lshft\prp_\pair(p)$ are satisfied.
\end{enumerate}
\end{lemma}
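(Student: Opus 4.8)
This is a direct, if lengthy, computation from the explicit shapes \eqref{eq: shapes}, \eqref{eq: shapes0}, which I would carry out separately in cases~(1), (2) and~(3). The first step is to record the simplified forms of $w$ and $x$: in cases~(1) and~(2) every branch of \eqref{eq: shapes} evaluates to $w(i)=n+1-i$, so $w=w_0$ and $\rk_w(i,j)=\max(0,i+j-n)$, while \eqref{eq: shapes0} collapses to $w_0$ with one adjoined fixed point, which is exactly the asserted description; in case~(3) one keeps the (at most) five branches, noting which are empty. From these one computes $\rk_x$, hence $\rk_\pair=\rk_x-\rk_w$, the level set $\X_\pair$ and its complement $\Xc_\pair$; in cases~(1), (2) this is painless because of the closed form for $\rk_{w_0}$.

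Next I would determine $\RR_\pair$ and the transpositions $t$ for which $\pair$ is $t$-minimal, from which $\crit_\pair$ and $\crit^{\red}_\pair$ follow. Whenever $t_{a,b}\in\RR_\pair$, the function $\rk_x-\rk_{xt_{a,b}}$ is the indicator of the box $[\pnt_x(a),\pnt_x(b))$; writing $\rk_\pair=(\rk_x-\rk_{xt_{a,b}})+(\rk_{xt_{a,b}}-\rk_w)$ with both summands nonnegative, it follows that a point $p$ with $\rk_\pair(p)=1$ lies in $\X_{\pair_t}$ if and only if $p\in[\pnt_x(a),\pnt_x(b))$. Since moreover $\rk_\pair\equiv1$ on $\crit_\pair$, this shows that $\RR_\pair(p)$ consists of those $t\in\RR_\pair$ whose box contains $p$, and that $\crit_\pair$ is the union of these boxes intersected with $\{\rk_\pair=1\}$. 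Using \eqref{eq: tijR} one lists $\RR_\pair$. In case~(1), where $x$ is monotone decreasing off its fixed point, every $t\in\RR_\pair$ has first index $i_1=1$, so $\pair$ is $t$-minimal for \emph{every} $t\in\RR_\pair$; hence $\crit^{\red}_\pair=\crit_\pair$, and a short computation identifies this with $\{(i,j)\in\smsqr:i+j\le n\}$, with case~(2) being the image of case~(1) under upending. In case~(3) a direct check with \eqref{eq: tijR} (equivalently, the analysis in the proof of Lemma~\ref{lem: mindredcase}, together with the observation that a single-armed $\RR_\pair$ would force case~(1) or~(2) and so does not occur here) shows that $\RR_\pair$ contains at least two transpositions with first index $i_1$ and at least two with second index $i_2$; consequently $\pair$ is $t$-minimal only for $t=t_{i_1,i_2}$, so $\crit^{\red}_\pair=\{p:\rk_\pair(p)=1,\ \RR_\pair(p)=\{t_{i_1,i_2}\}\}$. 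Restricting to the box $[\pnt_x(i_1),\pnt_x(i_2))$ (on which $\rk_\pair\ge1$) and deleting the points covered by the smaller boxes of the remaining elements of $\RR_\pair$ then leaves precisely the antidiagonal $\{(i,n-i):n+i_1-j_2\le i\le i_2-j_1\}$.

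With $\crit^{\red}_\pair$ pinned down, the remaining clauses are evaluations of the operators $p\mapsto\rdshft p_\pair$, $p\mapsto\lushft p_\pair$ of Definition~\ref{def: prp} and of the conditions $\rshft\prp_\pair(p)$, $\lshft\prp_\pair(p)$ via \eqref{eq: equrpop}. For $p=(i,j)\in\crit^{\red}_\pair$ the shape of $\Xc_\pair$ determines the nearest point of $\X_\pair$ along each axis, giving $\lushft p_\pair=(0,0)$ and $\rdshft p_\pair=(n+1-j,n+1-i)$ in case~(1), the mirror images in case~(2), and $\rdshft p_\pair=\pnt_x(i_2)=(i_2,j_2)$, $\lushft p_\pair=\pnt_x(i_1)-(1,1)=(i_1-1,j_1-1)$ in case~(3) (the two corners flanking the box $[\pnt_x(i_1),\pnt_x(i_2))$). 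By \eqref{eq: equrpop}, $\rshft\prp_\pair(p)$, resp.\ $\lshft\prp_\pair(p)$, then amounts to: $\rk_\pair(p)=1$ (automatic on $\crit^{\red}_\pair$), the relevant shift lies in $\X_\pair$ (immediate), and $\diff_w$ between $p$ and that shift vanishes; and the last is read off $\grph_w$ directly. For instance, in case~(1) one finds $\diff_{w_0}(p,\rdshft p_\pair)=\max(0,n-i-j)$, so $\rshft\prp_\pair(p)\iff\diff_{w_0}(p,\rdshft p_\pair)=0\iff i+j=n$, whereas $\diff_{w_0}(p,\lushft p_\pair)=\rk_{w_0}(p)=0$ because $i+j\le n$ on $\crit^{\red}_\pair$, so $\lshft\prp_\pair(p)$ always holds; in case~(3), for $p$ on the antidiagonal $\grph_w$ avoids the rectangles between $p$ and either corner, so $\diff_w(p,\rdshft p_\pair)=\diff_w(p,\lushft p_\pair)=0$ and both $\rshft\prp_\pair(p)$ and $\lshft\prp_\pair(p)$ hold.

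The one genuinely laborious step is case~(3): the shapes of $w$ and $x$ have up to five nontrivial branches, and the precise forms of $\Xc_\pair$, of $\RR_\pair$ and of the shift operators depend on which of $i_1=1$, $j_1=1$, $i_2=n$, $j_2=n$ hold and on which branches degenerate. I would tame this by first invoking the upending and inverting symmetries---upending swaps cases~(1) and~(2) and preserves case~(3), while inverting preserves each case and interchanges the roles of $(i_1,i_2)$ and $(j_1,j_2)$, and both are compatible with $\crit^{\red}_\pair$, with $p\mapsto\rdshft p_\pair$, $p\mapsto\lushft p_\pair$ and with $\rshft\prp_\pair$, $\lshft\prp_\pair$---which collapses the boundary configurations several-fold, and then by observing that all the clauses require $\rk_\pair$ only on the antidiagonal segment and on its four immediate axis-neighbours. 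The point most warranting care is verifying that the trichotomy is exhaustive and non-overlapping, i.e.\ that a pair of the given shape with a single-armed $\RR_\pair$ is automatically one of the pairs covered by case~(1) or~(2).
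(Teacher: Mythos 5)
The paper offers no proof of this lemma (it is introduced with ``The following assertion is straightforward''), so your direct verification from \eqref{eq: shapes}, \eqref{eq: shapes0} is exactly the intended argument, and your outline is correct --- including the one delicate point, namely that in case~(3) the inequalities $i_2+j_2\ge n+3$ and $i_1+j_1\le n-1$ (forced by excluding cases~(1) and~(2) together with $j_2-j_1+i_2-i_1\ge n$) guarantee enough transpositions in $\RR_\pair$ with first index $i_1$ and with second index $i_2$ to make $\pair$ $t$-minimal only for $t=t_{i_1,i_2}$.
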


\newcommand{\xpermcolor}{red}
\newcommand{\wpermcolor}{green}
\newcommand{\wxpermcolor}{blue}
\newcommand{\rkonecolor}{magenta}
\newcommand{\rktwocolor}{black}
\newcommand{\critsymb}{\text{C}}
\newcommand{\Xdot}{\color{\xpermcolor}\newmoon}
\newcommand{\Odot}{\color{\wpermcolor}\newmoon}
\newcommand{\Wdot}{\colorbox{\rkonecolor}{\color{\wxpermcolor}\newmoon}}
\newcommand{\Xdott}{\colorbox{\rkonecolor}{\Xdot}}
\newcommand{\Odott}{\colorbox{\rkonecolor}{\Odot}}
\newcommand{\Ydott}{\colorbox{\rkonecolor}{\color{\rkonecolor}\newmoon}}
\newcommand{\Ydottt}{\colorbox{\rktwocolor}{\color{\rktwocolor}\newmoon}}
\newcommand{\Sdot}{\colorbox{\rkonecolor}{\critsymb}}

\begin{example}
In the diagram below we draw the example for $n=12$, $i_1=4$, $j_1=3$, $i_2=10$, $j_2=11$.
The {\wpermcolor} (resp., \wpermcolor, \wxpermcolor) dots represent the points of $\grph_w\setminus\grph_x$ (resp., $\grph_x\setminus\grph_w$,
$\grph_x\cap\grph_w$). The background color of a point $p$ is white (resp., \rkonecolor, \rktwocolor) if $\rk_\pair(p)=0$ (resp., $1$, $2$).
The points in $\crit^{\red}_\pair$ are denotes by ``\critsymb''.

{
\setstretch{0}
\setlength\arraycolsep{0pt}
\[
\begin{array}{c|cccccccccccc}
& 1 & 2 & 3 & 4 & 5 & 6 & 7 & 8 & 9 & 10 & 11 & 12 \\
\hline
1 & & & & & & & & & & \Xdott & \Odot & \\
2 & & & & & & & & & \Xdott & \Odott & & \\
3 & & & & & & & & \Xdott & \Odott & \Ydott & & \\
4 & & & \Xdott & \Ydott & \Ydott & \Ydott & \Ydott & \Ydottt & \Ydottt & \Ydottt & \Ydott & \Odot \\
5 & & & \Ydott & \Ydott & \Ydott & \Ydott & \Sdot & \Odott & \Ydott & \Ydott & & \Xdot \\
6 & & & \Ydott & \Ydott & \Ydott & \Sdot & \Wdot & \Ydott & \Ydott & \Ydott & &   \\
7 & & & \Ydott & \Ydott & \Sdot & \Wdot & \Ydott & \Ydott & \Ydott & \Ydott & &   \\
8 & & \Xdott & \Ydottt & \Ydottt & \Odott & \Ydott & \Ydott & \Ydott & \Ydott & \Ydott & & \\
9 & \Xdott & \Odott & \Ydottt & \Ydottt & \Ydott & \Ydott & \Ydott & \Ydott & \Ydott & \Ydott & & \\
10 & \Odot & & \Ydott & \Ydott & & & & & & & \Xdot & \\
11 & & & \Ydott & \Odot & \Xdot & & & & & & & \\
12 & & & \Odot & \Xdot & & & & & & & &
\end{array}
\]
}
\end{example}

We now deduce assertion \eqref{ass: spclcase} to complete the proof of Proposition \ref{prop: main1'}.
By Lemma \ref{lem: redred} and \eqref{eq: redcrit} it is enough to prove \eqref{ass: spclcase} in the case where $\pair$ is reduced.
This case follows from Lemmas \ref{lem: mindredcase} and \ref{lem: basicbasic}.

\section{Proof of Proposition \ref{prop: main}} \label{sec: propmain}

\subsection{Notation and auxiliary results}
We go back to the conditions $\prpd_\pair(p)$, $\prpu_\pair(p)$, $\qrp_\pair(p)$ defined in \S\ref{sec: defbase}
and set some more notation.

Let $\pair=(w,x)\in\pairset$ and $p=(i,j)\in\smsqr$.
Recall that $\nextd{p}_\pair=(i',j)\in\bsqr$ was defined by
\[
i'=\max\{l\ge i:(l,j')\in\X_\pair\text{ and }\diff_w(p,(l,j'))=0\}\text{ where }
j'=\min\{k>j:(i,k)\in\X_\pair\}.
\]
We will also write $\rshft{p}_\pair=(i,j')\in\X_\pair$ and $\nextrd{p}_\pair=(i',j')\in\X_\pair$, so that $\diff_w(p,\nextrd{p}_\pair)=0$.

Similarly, define $\lshft{p}_\pair=(i,j')\in\X_\pair$ and $\nextlu{p}_\pair=(i',j')\in\X_\pair$ by
\[
j'=\max\{k<j:(i,k)\in\X_\pair\},\ \ i'=\min\{l\le i:(l,j')\in\X_\pair\text{ and }\diff_w(p,(l,j'))=0\}
\]
so that $\diff_w(p,\nextlu{p}_\pair)=0$ and recall that $\nextu{p}_\pair=(i',j)\in\bsqr$.

We caution that $\nextrd{p}_\pair$ and $\nextlu{p}_\pair$ should not be confused with $\rdshft p_\pair$ and $\lushft p_\pair$ defined in \S\ref{sec: statebase}.

Note that for any $p\in\Xc_\pair$ we have $\diff_x(p,\nextrd{p}_\pair)=\diff_\pair(p,\nextrd{p}_\pair)$.
Thus
\begin{equation} \label{prpdholds}
\prpd_\pair(p)\iff\diff_\pair(p,\nextrd{p}_\pair)>0\iff\diff_x(p,\nextrd{p}_\pair)>0.
\end{equation}

\begin{definition}
\begin{enumerate}
\item We say that $p$ is $\uparrow$-maximal (resp., $\downarrow$-maximal) with respect to $\pair$ if $\nextu{p}_\pair=p$
(resp., $\nextd{p}_\pair=p$).
\item We write
\[
\qrpd_\pair(p)=\prpd_\pair(p)\lor\prpu_\pair(\nextd{p}_\pair),\ \ \qrpu_\pair(p)=\prpu_\pair(p)\lor\prpd_\pair(\nextu{p}_\pair)
\]
so that
\[
\qrp_\pair(p)=\qrpd_\pair(p)\lor\qrpu_\pair(p).
\]
\end{enumerate}
\end{definition}

It is clear that
\begin{equation} \label{impluord}
\text{if $p$ is $\downarrow$-maximal (resp., $\uparrow$-maximal) then $\qrp_p(\pair)$ implies $\qrpu_p(\pair)$ (resp., $\qrpd_p(\pair)$).}
\end{equation}

Note that the conditions $\qrpd_\pair(p)$ and $\qrpu_{\pair^*}(p^*)$ are equivalent.

In the rest of this subsection we give some simple properties of the interplay betweem the various conditions
$\prpd_\pair(p)$, $\prpu_\pair(p)$, $\qrpd_\pair(p)$, $\qrpu_\pair(p)$, $\rshft\prp_\pair(p)$, $\lshft\prp_\pair(p)$.
They will be used in the induction step of Proposition \ref{prop: main}. Throughout let $\pair=(w,x)\in\pairset$.

\begin{lemma} \label{lem: prpdp}
Let $\rshft{p}_\pair=(i,j')$ (resp., $\lshft{p}_\pair=(i,j')$).
Then $\prpd_\pair(p)$ (resp., $\prpu_\pair(p)$) is satisfied if and only if there exists $i'>i$ (resp., $i'<i$) such that $p':=(i',j')\in\X_\pair$
and $\diff_w(p,p')=0<\diff_x(p,p')$. In particular, $\rshft\prp_\pair(p)$ (see Definition \ref{def: prp}) implies $\prpd_\pair(p)$ and similarly
$\lshft\prp_\pair(p)$ implies $\prpu_\pair(p)$.
\end{lemma}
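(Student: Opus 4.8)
The plan is to reduce everything to the reformulation \eqref{prpdholds} of $\prpd_\pair(p)$ together with one elementary fact: for $q\ge p$ one has $\diff_x(p,q)=\#(\grph_x\cap(p,q])$, which is monotone in $q$ (and vanishes when $q$ lies in the same row as $p$, since then $(p,q]=\emptyset$). By the symmetry $\pair\mapsto\pair^*$, $p\mapsto p^*$ (which swaps $\prpd_\pair$ with $\prpu_{\pair^*}$ and $\rshft{p}_\pair$ with $\lshft{p^*}_{\pair^*}$) it suffices to treat the $\prpd_\pair$, $\rshft{p}_\pair$ case; moreover if $p\in\X_\pair$ both conditions in the lemma fail trivially, so we may assume $p\in\Xc_\pair$. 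Write $p=(i,j)$, $\rshft{p}_\pair=(i,j')$ and recall $\nextrd{p}_\pair=(i_0,j')$ with $i_0=\max\{l\ge i:(l,j')\in\X_\pair,\ \diff_w(p,(l,j'))=0\}$, so that by \eqref{prpdholds} we have $\prpd_\pair(p)\iff\diff_x(p,\nextrd{p}_\pair)>0$.

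For the forward implication I would take $i'=i_0$ and $p'=\nextrd{p}_\pair$: by construction $p'\in\X_\pair$ and $\diff_w(p,p')=0$, and the hypothesis $\prpd_\pair(p)$ gives $\diff_x(p,p')>0$, so the only thing to check is $i'>i$. If instead $i_0=i$ then $p'=\rshft{p}_\pair$ is in the same row as $p$, whence $\diff_x(p,p')=0$, contradicting $\prpd_\pair(p)$; thus $i_0>i$. For the converse, given $i'>i$ with $p':=(i',j')\in\X_\pair$ and $\diff_w(p,p')=0<\diff_x(p,p')$, note that $(i',j')$ satisfies the conditions defining $i_0$, so $i\le i'\le i_0$, hence $p\le p'\le\nextrd{p}_\pair$; monotonicity of $q\mapsto\diff_x(p,q)$ then gives $\diff_x(p,\nextrd{p}_\pair)\ge\diff_x(p,p')>0$, i.e.\ $\prpd_\pair(p)$ by \eqref{prpdholds}.

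Finally, for the ``in particular'' clause I would feed $p'=\rdshft p_\pair$ into the criterion just established: the first coordinate of $\rdshft p_\pair$ is $\min\{l>i:(l,j)\in\X_\pair\}>i$ and its second coordinate is exactly the $j'$ of $\rshft{p}_\pair$, and when $\rshft\prp_\pair(p)$ holds, \eqref{eq: equrpop} gives $\rdshft p_\pair\in\X_\pair$ and $\diff_w(p,\rdshft p_\pair)=0$, while $\diff_x(p,\rdshft p_\pair)=1>0$ by the very definition of $\rshft\prp_\pair(p)$; so $\rdshft p_\pair$ is an admissible $p'=(i',j')$ with $i'>i$ and $\prpd_\pair(p)$ follows. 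The argument is essentially bookkeeping with the definitions; the only genuine point of care — and hence the ``hard'' step — is keeping $\nextrd{p}_\pair$ (far to the right, height-adjusted) distinct from $\rshft{p}_\pair$ (same row as $p$), which is precisely what secures the strict inequality $i'>i$.
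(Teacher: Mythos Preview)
Your argument is correct and is essentially the paper's own proof: for the ``if'' direction you both observe that the given $(i',j')$ forces $i_0\ge i'$ and then use monotonicity of $\diff_x(p,\cdot)$ together with \eqref{prpdholds}; the paper merely declares the ``only if'' direction and the ``in particular'' clause clear, whereas you spell them out by taking $p'=\nextrd{p}_\pair$ (checking $i_0>i$ via $\diff_x(p,\rshft{p}_\pair)=0$) and $p'=\rdshft p_\pair$ respectively.
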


\begin{proof}
The ``only if'' direction clear. For the ``if'' direction note that $p'':=\nextrd{p}_\pair=(i'',j')$ with $i''\ge i'$.
Therefore $\diff_\pair(p,p'')=\diff_x(p,p'')\ge\diff_x(p,p')=\diff_\pair(p,p')>0$ and hence $\rk_\pair(\nextd{p}_\pair)<\rk_\pair(p)$.
\end{proof}

\begin{remark}
In fact, it is easy to see that the condition $\rshft\prp_\pair(p)$ is equivalent to $\rk_\pair(p)=1$ and $\prpd_\pair(p)$.
We will not use this fact.
\end{remark}

\begin{lemma} \label{lem: nitm2}
Let $p=(i,j)\in\Xc_\pair$, $\nextlu{p}_\pair=(i',j')$ and $p'=\nextu{p}_\pair=(i',j)$.
Assume that $\prpu_\pair(p)$ is not satisfied. Then
\begin{enumerate}
\item $p'$ is $\uparrow$-maximal with respect to $\pair$.
\item \label{part: two} For any  $p''=(i_1,j)$ with $i'\le i_1\le i$ we have $\rk_\pair(p'')\ge\rk_\pair(p)$ and if equality holds then
$\nextu{p''}_\pair=p'$, $\lshft{p''}_\pair=(i_1,j')$ and $\prpu_\pair(p'')$ is not satisfied.
\item In particular, $\lshft{p'}_\pair=\nextlu{p}_\pair$.
\item If $\qrpu_\pair(p)$ is satisfied then $\nextd{p'}_\pair=(i'',j)$ with $i''>i$.
\end{enumerate}
\end{lemma}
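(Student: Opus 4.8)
The plan is to extract from the failure of $\prpu_\pair(p)$ a single rigidity statement and then read off all four assertions from it. Write $p=(i,j)$, $\lshft{p}_\pair=(i,j')$, $\nextlu{p}_\pair=(i',j')$ and $p'=\nextu{p}_\pair=(i',j)$; by construction $\diff_w(p,\nextlu{p}_\pair)=0$ and $\lshft{p}_\pair,\nextlu{p}_\pair\in\X_\pair$. First I would note that $\crnrs(p,\nextlu{p}_\pair)=\{\lshft{p}_\pair,p'\}$ and that $\rk_\pair$ vanishes at $\lshft{p}_\pair$ and $\nextlu{p}_\pair$, so
\[
0\le\diff_x(p,\nextlu{p}_\pair)=\diff_\pair(p,\nextlu{p}_\pair)=\rk_\pair(p)-\rk_\pair(p').
\]
Since $\prpu_\pair(p)$ fails, $\rk_\pair(p')=\rk_\pair(\nextu{p}_\pair)\ge\rk_\pair(p)$, and both are forced to be equalities: $\rk_\pair(p')=\rk_\pair(p)$ and $\diff_x(p,\nextlu{p}_\pair)=\diff_w(p,\nextlu{p}_\pair)=0$. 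Equivalently, neither $\grph_w$ nor $\grph_x$ meets $(\nextlu{p}_\pair,p]$; that is, for every $u$ with $i'<u\le i$ both $w(u)$ and $x(u)$ avoid the window $(j',j]$. This is the only step that genuinely uses the definition of $\nextu{p}_\pair$.

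Next I would reduce (1), (3) and (4) to (2). Parts (1) and (3) are exactly the case $i_1=i'$ of (2): the equality hypothesis of (2) holds for $p''=p'$ by the previous paragraph, and its conclusion then reads $\nextu{p'}_\pair=p'$ (so $p'$ is $\uparrow$-maximal) together with $\lshft{p'}_\pair=(i',j')=\nextlu{p}_\pair$. For (4): since $\prpu_\pair(p)$ fails, the hypothesis $\qrpu_\pair(p)$ forces $\prpd_\pair(p')$, i.e.\ $\rk_\pair(i'',j)<\rk_\pair(p')=\rk_\pair(p)$ where $\nextd{p'}_\pair=(i'',j)$ with $i''\ge i'$; but (2) asserts $\rk_\pair(i_1,j)\ge\rk_\pair(p)$ for all $i'\le i_1\le i$, whence $i''>i$. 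So everything comes down to (2).

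For (2), fix $p''=(i_1,j)$ with $i'\le i_1\le i$. Writing $\rk_\pair(i_1,j)-\rk_\pair(i',j)$ as the difference of the numbers of $u\in(i',i_1]$ with $x(u)\le j$, resp.\ $w(u)\le j$, and using the window-avoidance to replace ``$\le j$'' by ``$\le j'$'' (legitimate since $i_1\le i$), I obtain the rank-splitting identity
\[
\rk_\pair(i_1,j)=\rk_\pair(p)+\rk_\pair(i_1,j')\ge\rk_\pair(p),
\]
with equality iff $(i_1,j')\in\X_\pair$. Assume equality. To get $\lshft{p''}_\pair=(i_1,j')$ I would exclude a point $(i_1,k)\in\X_\pair$ with $j'<k<j$: in the rectangle with corners $(i_1,j'),(i,j'),(i_1,k),(i,k)$ the first three corners lie in $\X_\pair$ while $(i,k)\notin\X_\pair$ by the definition of $j'$, so $\diff_\pair((i_1,j'),(i,k))=\rk_\pair(i,k)>0$; but $\diff_w((i_1,j'),(i,k))=0$ because that interval sits inside $(\nextlu{p}_\pair,p]$, forcing $\diff_x((i_1,j'),(i,k))>0$, i.e.\ a point of $\grph_x$ in the forbidden window --- contradiction. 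Knowing $\lshft{p''}_\pair=(i_1,j')$, the index minimized in the definition of $\nextu{p''}_\pair$ is $\le i'$ (since $(i',j')\in\X_\pair$ and $\diff_w((i',j'),p'')=0$) and cannot equal some $l<i'$: were $(l,j')\in\X_\pair$, minimality of $i'$ in the definition of $\nextlu{p}_\pair$ would give $\diff_w((l,j'),p)>0$, while $\diff_w((l,j'),p'')=0$ would then force $\diff_w((i_1,j'),p)>0$, contradicting the window-avoidance. Hence $\nextu{p''}_\pair=p'$, and $\prpu_\pair(p'')$ fails because $\rk_\pair(\nextu{p''}_\pair)=\rk_\pair(p')=\rk_\pair(p)=\rk_\pair(p'')$.

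I expect the difficulty to be organisational rather than conceptual: the two ``empty window'' facts of the first paragraph and the rank-splitting identity are the real content, and once they are in hand parts (1)--(4) follow by unwinding the definitions of $\nextu{\cdot}_\pair$, $\nextd{\cdot}_\pair$, $\lshft{\cdot}_\pair$, $\prpu_\pair$, $\prpd_\pair$ and $\qrpu_\pair$, using repeatedly the simple fact \eqref{sf} and the monotonicity of $\diff_w$ under enlarging the underlying interval.
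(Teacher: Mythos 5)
Your proposal is correct and follows essentially the paper's own argument: the same two computations (the vanishing $\diff_x(p,\nextlu{p}_\pair)=\diff_w(p,\nextlu{p}_\pair)=0$ forced by the failure of $\prpu_\pair(p)$, and the splitting $\rk_\pair(p'')=\rk_\pair(p)+\rk_\pair(i_1,j')$) carry the whole lemma, with parts (1), (3), (4) read off exactly as you do. The only difference is organisational --- the paper establishes (1) and (3) first and then (2), whereas you fold (1) and (3) into the case $i_1=i'$ of (2) --- and this causes no circularity.
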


\begin{proof}
Let $q=\nextlu{p}_\pair$ and $\lshft{p'}_\pair=(i',j_1)$. Then $j_1\ge j'$ since $q\in\X_\pair$. Assume on the contrary that $j_1>j'$.
Since $\prpu_\pair(p)$ is not satisfied, we have $\diff_x(q,p)=\diff_w(q,p)=0$.
Hence $\diff_x(q,(i,j_1))=\diff_w(q,(i,j_1))=0$ and since $(i',j'),(i,j'),(i',j_1)\in\X_\pair$ we would get $(i,j_1)\in\X_\pair$ which contradicts the definition of $j'$.
It is now clear that $p'$ is $\uparrow$-maximal.

Now let $p''=(i_1,j)$ with $i'\le i_1\le i$. Then $\diff_x((i_1,j'),p)=\diff_w((i_1,j'),p)=0$ and $\lshft p_\pair=(i,j')\in\X_\pair$
and hence $\rk_\pair(p'')=\rk_\pair(p)+\rk_\pair(i_1,j')$. Thus, $\rk_\pair(p'')\ge\rk_\pair(p)$ with an equality
if and only if $(i_1,j')\in\X_\pair$. In this case, by the same reasoning as before we have $\lshft{p''}_\pair=(i_1,j')$,
and hence $\nextu{p''}_\pair=p'$. In particular, $\prpu_\pair(p'')$ is not satisfied.

The last part is now clear from the definitions.
\end{proof}

For convenience we also write down the symmetric version of Lemma \ref{lem: nitm2}.
\begin{lemma} \label{lem: nitm}
Let $p=(i,j)\in\Xc_\pair$, $\nextrd{p}_\pair=(i',j')$ and $p'=\nextd{p}_\pair=(i',j)$.
Assume that $\prpd_\pair(p)$ is not satisfied. Then
\begin{enumerate}
\item $p'$ is $\downarrow$-maximal with respect to $\pair$.
\item For any  $p''=(i_1,j)$ with $i\le i_1\le i'$ we have $\rk_\pair(p'')\ge\rk_\pair(p)$ and if equality holds then
$\nextd{p''}_\pair=p'$, $\rshft{p''}_\pair=(i_1,j')$ and $\prpd_\pair(p'')$ is not satisfied.
\item In particular, $\rshft{p'}_\pair=\nextrd{p}_\pair$.
\item If $\qrpd_\pair(p)$ is satisfied then $\nextu{p'}_\pair=(i'',j)$ with $i''<i$.
\end{enumerate}
\end{lemma}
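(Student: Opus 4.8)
The plan is to obtain Lemma~\ref{lem: nitm} from Lemma~\ref{lem: nitm2} by the upending symmetry: I would apply the latter to the pair $\pair^*=(w^*,x^*)\in\pairset$ and the point $p^*\in\Xc_{\pair^*}$ in place of $\pair$ and $p$, and then translate each of its four conclusions back to $\pair$ and $p$.

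The one thing that needs to be checked is the dictionary relating the two pictures. From the symmetry identities of \S2.2 ($\X_{\pair^*}=\X_\pair^*$, $\rk_{\pair^*}(q^*)=\rk_\pair(q)$) together with the resulting equality $\diff_{w^*}(q^*,r^*)=\diff_w(q,r)$ (which holds because $\rk_{w^*}(q^*)$ differs from $\rk_w(q)$ by a term that telescopes away in the alternating sum defining $\diff$), one reads off directly from the definitions that the reflection $q\mapsto q^*$ interchanges the $\downarrow$- and $\uparrow$-constructions: $\nextu{p^*}_{\pair^*}=(\nextd{p}_\pair)^*$, $\nextlu{p^*}_{\pair^*}=(\nextrd{p}_\pair)^*$, $\lshft{q^*}_{\pair^*}=(\rshft{q}_\pair)^*$, and $q$ is $\downarrow$-maximal with respect to $\pair$ if and only if $q^*$ is $\uparrow$-maximal with respect to $\pair^*$. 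Together with the already-noted equivalences $\prpd_\pair(q)\iff\prpu_{\pair^*}(q^*)$ and $\qrpd_\pair(p)\iff\qrpu_{\pair^*}(p^*)$, this identifies the hypothesis of Lemma~\ref{lem: nitm} for $(\pair,p)$ with the hypothesis of Lemma~\ref{lem: nitm2} for $(\pair^*,p^*)$.

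Granting the dictionary, each assertion is a verbatim translation. Conclusion~(1) is immediate. For~(2), a point $p''=(i_1,j)$ with $i\le i_1\le i'$ upends to $(p'')^*=(n-i_1,n-j)$, whose first coordinate lies between that of $\nextu{p^*}_{\pair^*}=(\nextd{p}_\pair)^*$ and that of $p^*$, so Lemma~\ref{lem: nitm2}(2) applies to it in the $\pair^*$-picture; translating the conclusions about $\rk$, the $\uparrow$-maps, $\lshft{\cdot}$ and the failure of $\prpu$ back through the dictionary yields precisely the stated properties of $\rk_\pair(p'')$, $\nextd{p''}_\pair$, $\rshft{p''}_\pair$ and $\prpd_\pair(p'')$. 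Conclusions~(3) and~(4) follow the same way, the latter using that the reflection carries $\nextd{\cdot}_{\pair^*}$ evaluated at $(\nextd{p}_\pair)^*$ to $\nextu{\cdot}_\pair$ evaluated at $\nextd{p}_\pair$. I do not expect any genuine obstacle here; the only point requiring care is the bookkeeping in the dictionary above. Alternatively, and perhaps more transparently for the reader, one could simply repeat the proof of Lemma~\ref{lem: nitm2} word for word, interchanging ``up'' with ``down'' and the corresponding decorated arrows throughout.
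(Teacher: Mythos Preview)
Your proposal is correct and is precisely the approach taken in the paper: the paper introduces Lemma~\ref{lem: nitm} explicitly as ``the symmetric version of Lemma~\ref{lem: nitm2}'' and gives no further argument, so your derivation via the upending symmetry $\pair\mapsto\pair^*$, $p\mapsto p^*$ (with the dictionary you spell out) is exactly what is intended.
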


The following lemma is straightforward.
\begin{lemma} \label{lem: simpqp'''}
Let $p=(i,j)\in\Xc_\pair$ and $t=t_{i_1,i_2}\in\RR_\pair$. Assume that $q=\rshft p_{\pair_t}\in\Xc_\pair$
(i.e., $q\in\Xc_{(xt,x)}$).
Then $\lshft q_\pair=\lshft p_\pair$. Assume that $\lshft\prp_\pair(q)$ holds.
Then $\nextu{p}_\pair=(i',j)$ with $i'<i_1$ and in fact $i'\le i''$ where $\lushft q_\pair=(i'',j'')$.
In particular, $p$ is not $\uparrow$-maximal with respect to $\pair$.
If in addition $p\in\Xc_{(xt,x)}$ then $\pnt_x(i_1)\in (\lushft q_\pair,p]$ and hence by Lemma \ref{lem: prpdp} $\prpu_\pair(p)$ holds.
\end{lemma}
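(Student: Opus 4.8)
The plan is to unwind the various shift operators and reduce the whole statement to locating the graph point $\pnt_x(i_1)$ inside the rectangle whose opposite corners are $\lushft q_\pair$ and $p$. I would begin with the unconditional claim. Write $p=(i,j)$ and $q=\rshft p_{\pair_t}=(i,j_0)$, so that $j_0=\min\{k>j:(i,k)\in\X_{\pair_t}\}$. Since $x<xt\le w$ forces $\rk_{xt}\le\rk_x$, we have $\X_\pair\subseteq\X_{\pair_t}$, so there is no point of $\X_\pair$ in row $i$ with column strictly between $j$ and $j_0$, and also $p\notin\X_\pair$ because $p\in\Xc_\pair$. Hence $\max\{k<j_0:(i,k)\in\X_\pair\}=\max\{k<j:(i,k)\in\X_\pair\}=:j'$, and this single equality gives at once $\lshft q_\pair=(i,j')=\lshft p_\pair$ as well as the fact that the second coordinate of $\lushft q_\pair$ is also $j'$; write $\lushft q_\pair=(i'',j')$.

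Next I would bring in the hypothesis $\lshft\prp_\pair(q)$. By the left-handed form of \eqref{eq: equrpop} this is equivalent to the conjunction $\rk_\pair(q)=1$, $\lushft q_\pair=(i'',j')\in\X_\pair$, and $\diff_w(q,\lushft q_\pair)=0$. Since $q=\rshft p_{\pair_t}\in\X_{\pair_t}$ while $q\in\Xc_\pair$ and $\X_\pair=\X_{\pair_t}\cap\X_{(xt,x)}$, we get $q\in\Xc_{(xt,x)}=[\pnt_x(i_1),\pnt_x(i_2))$ by \eqref{eq: tijR}, that is, $i_1\le i<i_2$ and $x(i_1)\le j_0<x(i_2)$. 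Now both $(i,j')=\lshft p_\pair$ and the corner $(i'',j_0)\in\crnrs(q,\lushft q_\pair)$ lie in $\X_\pair\subseteq\X_{(xt,x)}$, hence outside $[\pnt_x(i_1),\pnt_x(i_2))$; feeding in $i_1\le i<i_2$, $j'<j_0<x(i_2)$, and $i''<i<i_2$ forces $j'<x(i_1)$ from the first point and $i''<i_1$ from the second. In particular $\lushft q_\pair=(i'',j')<\pnt_x(i_1)$.

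For the conclusion I would observe that the box $(\lushft q_\pair,p]$ is contained in $(\lushft q_\pair,q]$ (same first coordinates and $j\le j_0$), so $\diff_w(p,\lushft q_\pair)\le\diff_w(q,\lushft q_\pair)=0$. Thus $(i'',j')$ belongs to the set whose minimum defines the first coordinate of $\nextu p_\pair$, which gives $\nextu p_\pair=(i',j)$ with $i'\le i''<i_1$; since $i'\le i''<i$, the point $p$ is not $\uparrow$-maximal. Finally, if in addition $p\in\Xc_{(xt,x)}=[\pnt_x(i_1),\pnt_x(i_2))$, then $\pnt_x(i_1)\le p$, and together with $\lushft q_\pair<\pnt_x(i_1)$ this gives $\pnt_x(i_1)\in(\lushft q_\pair,p]$, so $\diff_x(p,\lushft q_\pair)\ge1>0=\diff_w(p,\lushft q_\pair)$; Lemma \ref{lem: prpdp} applied with the point $p'=\lushft q_\pair=(i'',j')$ then yields $\prpu_\pair(p)$.

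The only step with genuine content is the appeal to \eqref{eq: equrpop}, which upgrades the rank-one condition $\lshft\prp_\pair(q)$ into the membership $\lushft q_\pair\in\X_\pair$ and the vanishing $\diff_w(q,\lushft q_\pair)=0$; after that everything is forced by the elementary description $\Xc_{(xt,x)}=[\pnt_x(i_1),\pnt_x(i_2))$. The main obstacle is therefore not conceptual but bookkeeping: one must keep straight the several different shift operators and the fact that all the relevant second coordinates collapse to the single value $j'$.
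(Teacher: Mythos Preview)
Your proof is correct and is precisely the kind of direct unwinding the paper has in mind when it calls this lemma ``straightforward'' (the paper gives no proof). Every step checks out: the identification $\lshft q_\pair=\lshft p_\pair$ via $\X_\pair\subseteq\X_{\pair_t}$, the use of the left-handed \eqref{eq: equrpop} to extract $\lushft q_\pair\in\X_\pair$ and $\diff_w(q,\lushft q_\pair)=0$, the forcing of $j'<j_1$ and $i''<i_1$ from the corners $\crnrs(q,\lushft q_\pair)\subset\X_\pair\subseteq\X_{(xt,x)}$, and the final appeal to Lemma~\ref{lem: prpdp}.
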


\begin{lemma} \label{lem: maxsmlr'}
Let $t\in\RR_\pair$, $p\in\Xc_{\pair_t}$, $\nextd{p}_\pair=(i',j)$ and $\nextd{p}_{\pair_t}=(i'',j)$.
Then $i''\ge i'$. In particular, if $p$ is $\downarrow$-maximal with respect to $\pair_t$
then it is also $\downarrow$-maximal with respect to $\pair$.
\end{lemma}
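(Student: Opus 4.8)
The plan is to reduce the inequality $i''\ge i'$ to an inclusion of the two index sets over which the defining maxima are taken. Write $p=(i,j)$, $\rshft{p}_\pair=(i,c)$, $\rshft{p}_{\pair_t}=(i,c')$, so that $c=\min\{k>j:(i,k)\in\X_\pair\}$ and $c'=\min\{k>j:(i,k)\in\X_{\pair_t}\}$, and recall that then $i'=\max\{l\ge i:(l,c)\in\X_\pair,\ \diff_w(p,(l,c))=0\}$ and $i''=\max\{l\ge i:(l,c')\in\X_{\pair_t},\ \diff_w(p,(l,c'))=0\}$; both sets contain $l=i$, so both maxima exist and are $\ge i$. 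The one structural fact I would record at the outset is that $t\in\RR_\pair$ means $x<xt\le w$, hence $\rk_w\le\rk_{xt}\le\rk_x$ on $\bsqr$, so $\X_\pair\subseteq\X_{\pair_t}$; in particular $c'\le c$.

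Next I would show that the index set defining $i'$ is contained in the one defining $i''$, which gives $i''\ge i'$ at once. So fix $l\ge i$ with $(l,c)\in\X_\pair$ and $\diff_w(p,(l,c))=0$. Since $c'\le c$, the set $(p,(l,c')]$ is contained in $(p,(l,c)]$, so $\diff_w(p,(l,c'))=0$ as well; the only thing left is $(l,c')\in\X_{\pair_t}$. This is immediate if $l=i$ (definition of $c'$) or if $c'=c$ (then $(l,c')=(l,c)\in\X_\pair\subseteq\X_{\pair_t}$), so assume $l>i$ and $c'<c$. Then $(i,c')$ and $(l,c)$ both lie in $\X_{\pair_t}$ (the first by definition of $c'$, the second from $\X_\pair\subseteq\X_{\pair_t}$), they satisfy $(i,c')\cmpr(l,c)$, and $\diff_w((i,c'),(l,c))\le\diff_w(p,(l,c))=0$ because $c'>j$ forces $((i,c'),(l,c)]\subseteq(p,(l,c)]$. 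Feeding this pair of points into the simple fact \eqref{sf} applied to $\pair_t$ gives $\crnrs((i,c'),(l,c))\subseteq\X_{\pair_t}$, and in particular $(l,c')\in\X_{\pair_t}$, completing the inclusion.

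For the final clause, once $i''\ge i'\ge i$ is known, $\downarrow$-maximality of $p$ with respect to $\pair_t$ means $i''=i$, which forces $i'=i$, i.e. $p$ is $\downarrow$-maximal with respect to $\pair$. The argument is essentially bookkeeping; the only step that takes any thought is verifying $(l,c')\in\X_{\pair_t}$ when $i<l$ and $j<c'<c$, and even there the point is merely to apply \eqref{sf} to the correct two level-set points after observing that the relevant $\diff_w$ vanishes because its interval sits inside $(p,(l,c)]$. I do not anticipate a genuine obstacle.
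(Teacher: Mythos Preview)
Your proof is correct and takes essentially the same approach as the paper's: both arguments observe $c'\le c$ from $\X_\pair\subseteq\X_{\pair_t}$, and then apply the simple fact \eqref{sf} (for $\pair_t$) to the pair $(i,c')\in\X_{\pair_t}$ and $(l,c)\in\X_\pair\subseteq\X_{\pair_t}$ to conclude $(l,c')\in\X_{\pair_t}$. The paper only checks this for the single value $l=i'$ (which is all that is needed), whereas you phrase it as an inclusion of the full index sets, but the substance is identical.
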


\begin{proof}
Write $q=\nextrd{p}_\pair=(i',j')$, $p'=\rshft p_\pair=(i,j')\in\X_\pair$, $\nextrd{p}_{\pair_t}=(i'',j'')$,
$p''=\rshft p_{\pair_t}=(i,j'')\in\X_{\pair_t}$.
If $p'=p''$ the lemma is clear since $\X_\pair\subset\X_{\pair_t}$. Otherwise, $j<j''<j'$.
We have $\diff_w(q,p'')=0$ and $q,p''\in\X_{\pair_t}$. Thus, by \eqref{sf} $(i',j'')\in\X_{\pair_t}$ and hence $i''\ge i'$.
\end{proof}

\begin{lemma} \label{lem: simpqp2''}
Let $t\in\RR_\pair$, $p=(i,j)\in\Xc_\pair$, $q=\rshft p_{\pair_t}$.
Assume that $q\in\Xc_\pair$ and that $\rshft\prp_\pair(q)$ is satisfied.
If $\nextrd{p}_{\pair_t}\in\X_\pair$ then $\prpd_\pair(p)$ holds.
Otherwise, $\nextd{p}_\pair=\nextd{p}_{\pair_t}$ and the conditions $\prpd_\pair(p)$ and $\prpd_{\pair_t}(p)$ are equivalent.
\end{lemma}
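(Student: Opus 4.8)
The plan is to split on whether $\nextrd{p}_{\pair_t}$ belongs to $\X_\pair$, after first exploiting that, for a single transposition, $\rk_\pair-\rk_{\pair_t}=\rk_x-\rk_{xt}$ equals $1$ on $\Xc_{(xt,x)}=[\pnt_x(i_1),\pnt_x(i_2))$ (here $t=t_{i_1,i_2}$; cf.\ \eqref{eq: tijR}) and $0$ elsewhere. Write $\rshft{p}_\pair=(i,j')$ and $q=\rshft{p}_{\pair_t}=(i,j_t')$; since $\X_\pair\subset\X_{\pair_t}$ and $q\notin\X_\pair$ one has $j<j_t'<j'$ and $q\in\X_{\pair_t}\setminus\X_\pair$, so $\rk_\pair(q)=1$ and $q\in\Xc_{(xt,x)}$, i.e.\ $i_1\le i<i_2$ and $x(i_1)\le j_t'<x(i_2)$. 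As $(i,j')\in\X_\pair$ cannot lie in $\Xc_{(xt,x)}$ (else $\rk_{\pair_t}(i,j')=-1$) and its row is in $[i_1,i_2)$, necessarily $j'\ge x(i_2)$; hence the whole column $j'$ misses $\Xc_{(xt,x)}$, so $\rk_{\pair_t}=\rk_\pair$ along it and $(l,j')\in\X_\pair\iff(l,j')\in\X_{\pair_t}$ for all $l$. Using $\rshft\prp_\pair(q)$ and \eqref{eq: equrpop}, $\rdshft{q}_\pair=(i_*,j')\in\X_\pair$ with $i_*=\min\{l>i:(l,j_t')\in\X_\pair\}>i$, $\diff_w(q,(i_*,j'))=0$, and $(i_*,j_t')\in\crnrs(q,\rdshft{q}_\pair)\subset\X_\pair$; since then $(i_*,j_t')\in\X_\pair$ is again disjoint from $\Xc_{(xt,x)}$ while $j_t'\in[x(i_1),x(i_2))$, its row cannot be in $[i_1,i_2)$, so $i_*\ge i_2$. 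Finally set $\nextrd{p}_{\pair_t}=(i'',j_t')$ (so $\diff_w(p,(i'',j_t'))=0$, $\nextd{p}_{\pair_t}=(i'',j)$) and $\nextrd{p}_\pair=(i_\circ,j')$ (so $\nextd{p}_\pair=(i_\circ,j)$).

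The engine of the proof is the auxiliary claim: \emph{if $i_*\le i''$ then $\prpd_\pair(p)$ holds}. Granting $i_*\le i''$, one gets $\diff_w(p,(i_*,j_t'))=0$ (a sub-rectangle of $(p,(i'',j_t')]$), hence $\diff_w(p,(i_*,j'))=\diff_w(p,(i_*,j_t'))+\diff_w(q,(i_*,j'))=0$; expanding $\diff_x=\diff_w+\diff_\pair$ with $(i,j'),(i_*,j')\in\X_\pair$ gives $\diff_x(p,(i_*,j'))=\rk_\pair(p)-\rk_\pair(i_*,j)$, and doing the same with $(i_*,j_t')\in\X_\pair$ and $\rk_\pair(q)=1$ gives $0\le\diff_x(p,(i_*,j_t'))=\rk_\pair(p)-1-\rk_\pair(i_*,j)$; together these yield $\diff_x(p,(i_*,j'))\ge1$, so Lemma \ref{lem: prpdp} (with witness $(i_*,j')$) gives $\prpd_\pair(p)$. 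The first case of the lemma follows at once: if $\nextrd{p}_{\pair_t}=(i'',j_t')\in\X_\pair$ then $i''>i$ (else $q=(i'',j_t')\in\X_\pair$), so $i''$ is among the $l$ defining $i_*$ and $i_*\le i''$.

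For the second case, $(i'',j_t')\notin\X_\pair$; then $(i'',j_t')\in\X_{\pair_t}\setminus\X_\pair$ lies in $\Xc_{(xt,x)}$, so $i_1\le i''<i_2\le i_*$, i.e.\ $i''<i_*$. I would then show $i_\circ=i''$. For $i_\circ\ge i''$: $\diff_w(q,(i'',j'))\le\diff_w(q,(i_*,j'))=0$, so $\diff_w(p,(i'',j'))=0$, while \eqref{sf} applied in $\pair_t$ to the $\cmpr$-pair $(i'',j_t')<(i_*,j')$ (both in $\X_{\pair_t}$, $\diff_w$ between them zero) gives $(i'',j')\in\X_{\pair_t}$, hence $(i'',j')\in\X_\pair$; so $i_\circ\ge i''$. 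For $i_\circ\le i''$: if $i_\circ>i''$ then $(i_\circ,j')\in\X_\pair$ and $\diff_w(p,(i_\circ,j'))=0$, hence $\diff_w(q,(i_\circ,j'))=0$, and \eqref{sf} in $\pair_t$ applied to $q<(i_\circ,j')$ forces $(i_\circ,j_t')\in\X_{\pair_t}$; since also $\diff_w(p,(i_\circ,j_t'))=0$, $i_\circ$ is among the $l$ defining $i''$ and $i''\ge i_\circ$, a contradiction. Thus $\nextd{p}_\pair=\nextd{p}_{\pair_t}=(i'',j)$, and by \eqref{prpdholds} the asserted equivalence becomes $\rk_\pair(i'',j)<\rk_\pair(p)\iff\rk_{\pair_t}(i'',j)<\rk_{\pair_t}(p)$; but $p=(i,j)$ and $(i'',j)$ share the column $j$ with both rows in $[i_1,i_2)$, so $\rk_\pair-\rk_{\pair_t}$ assumes the same value at the two points and the equivalence is immediate.

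I expect the main obstacle to be isolating the two structural facts $j'\ge x(i_2)$ and $i_*\ge i_2$: these are precisely what make the two halves of the dichotomy coincide with "$i_*\le i''$" and "$i''<i_*$", and without them the case bookkeeping multiplies. A secondary point of care is that $q\notin\X_\pair$, so every invocation of \eqref{sf} has to be made for the pair $\pair_t$ (where $q\in\X_{\pair_t}$), which in turn forces one to track the three reference columns $j$, $j_t'$, $j'$ and to check, at each use, that the relevant $\grph_w$-rectangle is contained in one on which $\diff_w$ is already known to vanish.
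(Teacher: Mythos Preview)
Your proof is correct and follows essentially the same route as the paper's: both split on whether $\nextrd{p}_{\pair_t}$ lies in $\X_\pair$, use $\rshft\prp_\pair(q)$ to produce the witness $(i_*,j')=\rdshft q_\pair$ for Lemma~\ref{lem: prpdp} in the first case, and in the second case show $\nextd{p}_\pair=\nextd{p}_{\pair_t}$ and conclude the equivalence from $i_1\le i,i''<i_2$. The only cosmetic differences are that you make the inequalities $j'\ge x(i_2)$ and $i_*\ge i_2$ explicit (the paper uses them tacitly in writing ``$i''<i_2\le i'$'') and that you reprove the bound $i_\circ\le i''$ directly rather than invoking Lemma~\ref{lem: maxsmlr'}.
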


\begin{proof}
Let $q=(i,j')$, $q'=\rdshft q_\pair=(i',j'')$ and $p'=\nextrd{p}_{\pair_t}=(i'',j')$. Note that $\rshft p_\pair=\rshft q_\pair$.

Suppose first that $p'\in\X_\pair$. Then $i''\ge i'$.
By the condition $\rshft\prp_\pair(q)$ we get $\diff_w(p,q')=\diff_w(p,(i',j'))+\diff_w(q,q')=0$ and $\diff_x(p,q')\ge\diff_x(q,q')=1$.
Thus, the lemma follows from Lemma \ref{lem: prpdp}.

Suppose now that $p'\in\Xc_\pair$. Then $i''<i_2\le i'$. We have $\diff_w(p,(i'',j''))=\diff_w(p,p')+\diff_w(q,(i'',j''))=0$.
Also $(i'',j'')\in\X_\pair$ since $(i,j'')\in\X_\pair$, $q,p'\in\X_{\pair_t}$ and $\diff_x(q,(i'',j''))=0$.
Hence $\nextd{p}_\pair=(i''',j)$ with $i'''\ge i''$.
On the other hand by Lemma \ref{lem: maxsmlr'} we have $i'''\le i''$.
Thus, $\nextd{p}_\pair=\nextd{p}_{\pair_t}$ the conditions $\prpd_\pair(p)$ and $\prpd_{\pair_t}(p)$ are equivalent
since $i_1\le i\le i''<i_2$.
\end{proof}

For convenience, we record the symmetric (equivalent) forms of Lemmas \ref{lem: simpqp'''}, \ref{lem: maxsmlr'} and \ref{lem: simpqp2''}.
\begin{lemma} \label{lem: simpqp''}
Let $p=(i,j)\in\Xc_\pair$ and $t=t_{i_1,i_2}\in\RR_\pair$. Assume that $q=\lshft p_{\pair_t}\in\Xc_\pair$.
Then $\rshft q_\pair=\rshft p_\pair$. Assume that $\rshft\prp_\pair(q)$ holds.
Then $\nextd{p}_\pair=(i',j)$ with $i'\ge i_2$ and in fact $i'\ge i''$ where $\rdshft q_\pair=(i'',j'')$.
In particular, $p$ is not $\downarrow$-maximal with respect to $\pair$.
If in addition $p\in\Xc_{(xt,x)}$ then $\pnt_x(i_2)\in (p,\rdshft q_\pair]$ and hence
by Lemma \ref{lem: prpdp} $\prpd_\pair(p)$ holds.
\end{lemma}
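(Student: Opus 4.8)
The lemma is the image of Lemma~\ref{lem: simpqp'''} under the upending involution $\pair\mapsto\pair^*$, and that is how I would prove it: apply Lemma~\ref{lem: simpqp'''} to the upended pair $\pair^*=(w^*,x^*)$, the upended point $p^*=(n-i,n-j)$ and the upended transposition $t^*$, and then transport the conclusion back. Since $t=t_{i_1,i_2}$ with $i_1<i_2$, we have $t^*=t_{n+1-i_2,\,n+1-i_1}\in\RR_{\pair^*}$, so the index of $t^*$ that plays the role of ``$i_1$'' in Lemma~\ref{lem: simpqp'''} is $n+1-i_2$; keeping track of this relabelling is the one point that demands attention.

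The transport uses only facts already recorded in the discussion of symmetries, in \S\ref{sec: defbase} and in \S\ref{sec: statebase}: $\X_{\pair^*}=\X_\pair^*$, $\rk_{\pair^*}(p^*)=\rk_\pair(p)$ and $(\pair_t)^*=(\pair^*)_{t^*}$; the involution $r\mapsto r^*$ carries $\rshft r_\pair$, $\rdshft r_\pair$ and $\nextd r_\pair$ to $\lshft{(r^*)}_{\pair^*}$, $\lushft{(r^*)}_{\pair^*}$ and $\nextu{(r^*)}_{\pair^*}$ respectively (and conversely); the conditions $\rshft\prp_\pair(r)$ and $\lshft\prp_{\pair^*}(r^*)$ are equivalent, as are $\prpd_\pair(r)$ and $\prpu_{\pair^*}(r^*)$; $\downarrow$-maximality with respect to $\pair$ corresponds to $\uparrow$-maximality with respect to $\pair^*$; and, as $*$ reverses the order $<$, it carries a half-open interval $(r,r']$ to the half-open interval $[{r'}^{*},r^{*})$. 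Feeding the hypotheses $q=\lshft p_{\pair_t}\in\Xc_\pair$ and $\rshft\prp_\pair(q)$ through this dictionary produces exactly the hypotheses of Lemma~\ref{lem: simpqp'''} for $(\pair^*,p^*,t^*)$, with $q^*=\rshft{(p^*)}_{(\pair^*)_{t^*}}$ in the role of ``$q$'', and each of its four conclusions transports back into the matching assertion of the present lemma. Concretely: $\lshft{(q^*)}_{\pair^*}=\lshft{(p^*)}_{\pair^*}$ becomes $\rshft q_\pair=\rshft p_\pair$; writing $\nextd p_\pair=(i',j)$ and $\rdshft q_\pair=(i'',j'')$, the conclusion on the first coordinate of $\nextu{(p^*)}_{\pair^*}$ --- that it is $<n+1-i_2$ and at most the first coordinate of $\lushft{(q^*)}_{\pair^*}$ --- unwinds, via $n-i'<n+1-i_2\iff i'\ge i_2$ and $n-i'\le n-i''\iff i'\ge i''$, to $i'\ge i_2$ and $i'\ge i''$; ``$p^*$ is not $\uparrow$-maximal'' becomes ``$p$ is not $\downarrow$-maximal''; and, when $p\in\Xc_{(xt,x)}$ (equivalently $p^*\in\Xc_{(x^*t^*,x^*)}$, since $x^*t^*=(xt)^*$), the membership $\pnt_{x^*}(n+1-i_2)\in(\lushft{(q^*)}_{\pair^*},p^*]$ becomes $\pnt_x(i_2)\in(p,\rdshft q_\pair]$ --- the interval reversed by $*$, the unit shift between $\pnt_{x^*}(n+1-i_2)$ and $\pnt_x(i_2)^{*}$ absorbed by the corresponding shift relating $\grph_{x^*}$ to $\grph_x^{*}$ --- whence $\prpd_\pair(p)$ by Lemma~\ref{lem: prpdp}.

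I do not expect a genuine obstacle here: Lemma~\ref{lem: simpqp'''} is itself straightforward and everything above is bookkeeping for the involution, the only mildly delicate step being to verify that the relabelling $t_{i_1,i_2}\mapsto t_{n+1-i_2,\,n+1-i_1}$ and the reversal of half-open intervals under $*$ leave the inequalities and the interval membership correctly oriented. If one prefers to avoid the symmetry argument, the alternative is to reproduce the short proof of Lemma~\ref{lem: simpqp'''} line for line, interchanging throughout the roles of ``up'' and ``down'', of $i_1$ and $i_2$, of $\rshft r_\pair$ and $\lshft r_\pair$, of $\rdshft r_\pair$ and $\lushft r_\pair$, and of $\nextd r_\pair$ and $\nextu r_\pair$; the two ingredients it relies on --- the simple fact \eqref{sf} and Lemma~\ref{lem: prpdp} --- are both invariant under $*$, so no new idea is required.
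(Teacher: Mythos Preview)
Your approach is exactly the paper's: the paper states Lemma~\ref{lem: simpqp''} without proof, introducing it (together with Lemmas~\ref{lem: maxsmlr} and~\ref{lem: simpqp2'}) as ``the symmetric (equivalent) forms'' of Lemmas~\ref{lem: simpqp'''}, \ref{lem: maxsmlr'} and~\ref{lem: simpqp2''} under the upending involution. Your bookkeeping of the transport---in particular the relabelling $t_{i_1,i_2}\mapsto t_{n+1-i_2,n+1-i_1}$, the inequality $n-i'<n+1-i_2\iff i'\ge i_2$, and the handling of the unit shift $\pnt_{x^*}(n+1-i_2)=\pnt_x(i_2)^*-(1,1)$ in the interval membership---is correct and more explicit than the paper.
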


\begin{lemma} \label{lem: maxsmlr}
Let $t\in\RR_\pair$, $p\in\Xc_{\pair_t}$, $\nextu{p}_\pair=(i',j)$ and $\nextu{p}_{\pair_t}=(i'',j)$.
Then $i''\le i'$. In particular, if $p$ is $\uparrow$-maximal with respect to $\pair_t$
then it is also $\uparrow$-maximal with respect to $\pair$.
\end{lemma}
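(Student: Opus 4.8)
The plan is to deduce Lemma \ref{lem: maxsmlr} from Lemma \ref{lem: maxsmlr'} by the upending involution $\sigma\mapsto\sigma^*$, $p\mapsto p^*$, $t\mapsto t^*$, so that no genuinely new argument is needed. Recall from the discussion of symmetries that $t\in\RR_\pair$ implies $t^*\in\RR_{\pair^*}$, that $(\pair_t)^*=(\pair^*)_{t^*}$, that $\X_{\pair^*}=\X_\pair^*$, and that $\rk_{\pair^*}(p^*)=\rk_\pair(p)$; moreover, since the linear term in $\rk_{w^*}(p^*)=n-i-j+\rk_w(p)$ cancels in the formula for the difference function, we have $\diff_{w^*}(p^*,q^*)=\diff_w(p,q)$. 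In particular $p\in\Xc_{\pair_t}$ if and only if $p^*\in\Xc_{(\pair^*)_{t^*}}$.

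Next I would verify directly from the definitions that the operations $\nextu{\cdot}_\pair$ and $\nextd{(\cdot)^*}_{\pair^*}$ are exchanged by upending: the rightmost column $j'$ appearing in $\lshft{p}_\pair$ becomes the leftmost column $n-j'$ appearing in $\rshft{p^*}_{\pair^*}$, and the minimum over $l\le i$ defining the row of $\nextu{p}_\pair$ turns into the maximum over $l\ge n-i$ defining the row of $\nextd{p^*}_{\pair^*}$ (using that $\diff_{w^*}((l,\cdot)^*,p^*)=0\iff\diff_w((l,\cdot),p)=0$). Thus, if $\nextu{p}_\pair=(i',j)$ then $\nextd{p^*}_{\pair^*}=(n-i',n-j)$, and likewise $\nextu{p}_{\pair_t}=(i'',j)$ corresponds to $\nextd{p^*}_{(\pair^*)_{t^*}}=(n-i'',n-j)$.

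Applying Lemma \ref{lem: maxsmlr'} to the triple $(\pair^*,p^*,t^*)$ (no smoothness is needed there) yields $n-i''\ge n-i'$, i.e. $i''\le i'$, which is the asserted inequality. For the ``in particular'' clause: $\uparrow$-maximality of $p$ with respect to $\pair_t$ means $i''=i$, and since $\nextu{p}_\pair=(i',j)$ always has $i'\le i$ by the very definition of $\nextu{\cdot}$, we get $i=i''\le i'\le i$, so $i'=i$ and $p$ is $\uparrow$-maximal with respect to $\pair$. There is essentially no obstacle here; the only care required is to match the column bookkeeping ($\lshft{\cdot}$ versus $\rshft{\cdot}$) correctly across the involution. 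Alternatively, one could simply transcribe the proof of Lemma \ref{lem: maxsmlr'} with $\nextrd{\cdot},\rshft{\cdot}$ replaced by $\nextlu{\cdot},\lshft{\cdot}$ and all inequalities reversed, again appealing to \eqref{sf} applied to $\pair_t$.
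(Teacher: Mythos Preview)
Your proposal is correct and is exactly the paper's approach: the paper does not give a separate proof of this lemma but records it explicitly as the ``symmetric (equivalent) form'' of Lemma \ref{lem: maxsmlr'} under the upending involution, which is precisely the reduction you carry out. Your verification that $\nextd{p^*}_{\pair^*}=(\nextu{p}_\pair)^*$ (and likewise for $\pair_t$) is the only nontrivial point, and it is handled correctly.
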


\begin{lemma} \label{lem: simpqp2'}
Let $t\in\RR_\pair$, $p=(i,j)\in\Xc_\pair$, $q=\lshft p_{\pair_t}$.
Assume that $q\in\Xc_\pair$ and that $\lshft\prp_\pair(q)$ is satisfied.
If $\nextlu{p}_{\pair_t}\in\X_\pair$ then $\prpu_\pair(p)$ holds.
Otherwise, $\nextu{p}_\pair=\nextu{p}_{\pair_t}$ and the conditions $\prpu_\pair(p)$ and $\prpu_{\pair_t}(p)$ are equivalent.
\end{lemma}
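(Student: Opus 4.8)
The plan is to deduce Lemma~\ref{lem: simpqp2'} directly from Lemma~\ref{lem: simpqp2''} via the upending symmetry $\pair\mapsto\pair^*$, so that there is essentially nothing new to prove: only a bookkeeping check that the hypotheses and the conclusion transport correctly under the involution $p\mapsto p^*=(n-i,n-j)$ of $\bsqr$.

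First I would recall the dictionary furnished by the discussion of symmetries: one has $\X_{\pair^*}=\X_\pair^*$, $\Xc_{\pair^*}=\Xc_\pair^*$, $\RR_{\pair^*}=\RR_\pair^*$, and $(\pair_t)^*=(\pair^*)_{t^*}$. The map $p\mapsto p^*$ interchanges the roles of ``up'' and ``down'' and of ``left'' and ``right''; concretely, $\lshft{p}_\pair$ corresponds to $\rshft{(p^*)}_{\pair^*}$, $\nextu{p}_\pair$ to $\nextd{(p^*)}_{\pair^*}$, $\nextlu{p}_\pair$ to $\nextrd{(p^*)}_{\pair^*}$, the condition $\prpu_\pair(p)$ is equivalent to $\prpd_{\pair^*}(p^*)$, and $\lshft\prp_\pair(p)$ is equivalent to $\rshft\prp_{\pair^*}(p^*)$ (the last equivalence being noted explicitly just after Definition~\ref{def: prp}). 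Since $\diff_w(p,p')$ merely counts the points of $\grph_w$ lying between $p$ and $p'$ and is manifestly $*$-invariant, all the auxiliary equalities of $\nextu{}$- and $\nextd{}$-points behave well under the involution too.

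Then I would simply apply Lemma~\ref{lem: simpqp2''} to the pair $\pair^*\in\pairset$, the transposition $t^*\in\RR_{\pair^*}$, and the point $p^*\in\Xc_{\pair^*}$. Under the dictionary above, $q=\lshft{p}_{\pair_t}$ becomes $q^*=\rshft{(p^*)}_{(\pair^*)_{t^*}}$, the standing assumptions $q\in\Xc_\pair$ and $\lshft\prp_\pair(q)$ become $q^*\in\Xc_{\pair^*}$ and $\rshft\prp_{\pair^*}(q^*)$, and the hypothesis $\nextlu{p}_{\pair_t}\in\X_\pair$ becomes $\nextrd{(p^*)}_{(\pair^*)_{t^*}}\in\X_{\pair^*}$. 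The first conclusion of Lemma~\ref{lem: simpqp2''}, namely ``$\prpd_{\pair^*}(p^*)$ holds'', translates back to ``$\prpu_\pair(p)$ holds''; and its second conclusion, namely ``$\nextd{(p^*)}_{\pair^*}=\nextd{(p^*)}_{(\pair^*)_{t^*}}$ and $\prpd_{\pair^*}(p^*)\iff\prpd_{(\pair^*)_{t^*}}(p^*)$'', translates back to ``$\nextu{p}_\pair=\nextu{p}_{\pair_t}$ and $\prpu_\pair(p)\iff\prpu_{\pair_t}(p)$''. These are precisely the two assertions of Lemma~\ref{lem: simpqp2'}.

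The only obstacle here is purely clerical: one must flip every arrow consistently and not drop a single $*$. In particular, $(\pair_t)^*=(\pair^*)_{t^*}$ must be invoked at the right moment so that $\lshft{p}_{\pair_t}$ and $\rshft{(p^*)}_{(\pair^*)_{t^*}}$ genuinely are mirror images, and the ``otherwise'' clause, which concerns an equality of $\nextu{}$/$\nextd{}$-points and not merely of the conditions $\prpu$/$\prpd$, must be checked to respect the involution --- which it does, since $p\mapsto p^*$ is a bijection of $\bsqr$ carrying $\X_\pair$ to $\X_{\pair^*}$, preserving $\cmpr$, and carrying $\diff$ to $\diff$ up to relabeling. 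If one prefers a self-contained argument instead, the proof of Lemma~\ref{lem: simpqp2''} can be copied verbatim, replacing each of $\rshft p$, $\rdshft p$, $\nextrd{p}$, $\nextd{p}$, $\prpd$, $\rshft\prp$ by $\lshft p$, $\lushft p$, $\nextlu{p}$, $\nextu{p}$, $\prpu$, $\lshft\prp$ respectively, and reversing each inequality between row indices.
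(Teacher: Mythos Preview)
Your proposal is correct and is exactly the approach taken in the paper: Lemma~\ref{lem: simpqp2'} is stated there without separate proof, being introduced explicitly as the ``symmetric (equivalent) form'' of Lemma~\ref{lem: simpqp2''} under the upending involution $\pair\mapsto\pair^*$. Your dictionary of how the various arrows, conditions, and points transport under $p\mapsto p^*$ is accurate and matches the symmetry relations recorded in the paper.
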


We need a couple of more Lemmas.

\begin{lemma} \label{lem: new1}
Let $p\in\Xc_\pair$, $q=\nextu{p}_\pair$ and $t\in\RR_\pair$. Suppose that $q\in\Xc_{(xt,x)}$ but $p\in\X_{(xt,x)}$.
Assume that $\prpu_\pair(p)$ is not satisfied but $\qrpd_\pair(q)$ is satisfied. Then $\prpd_\pair(q)$ is satisfied.
\end{lemma}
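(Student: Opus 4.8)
We are given $p\in\Xc_\pair$, $q=\nextu{p}_\pair$, and $t\in\RR_\pair$ with $q\in\Xc_{(xt,x)}$ but $p\in\X_{(xt,x)}$; we assume $\prpu_\pair(p)$ fails while $\qrpd_\pair(q)$ holds, and we must conclude $\prpd_\pair(q)$ holds. By definition $\qrpd_\pair(q)=\prpd_\pair(q)\lor\prpu_\pair(\nextd{q}_\pair)$, so we may assume $\prpd_\pair(q)$ fails and derive a contradiction from $\prpu_\pair(\nextd{q}_\pair)$. The plan is to exploit the structure forced by the failure of $\prpu_\pair(p)$ via Lemma \ref{lem: nitm2}, the failure of $\prpd_\pair(q)$ via Lemma \ref{lem: nitm}, and then to use the transposition $t$ to locate a point of $\grph_x$ that produces a contradiction.

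**Key steps.** First I would write $q=\nextu{p}_\pair=(i',j)$ where $p=(i,j)$, so $i'\le i$, and note that since $\prpu_\pair(p)$ fails, Lemma \ref{lem: nitm2} tells us $q$ is $\uparrow$-maximal with respect to $\pair$, that $\lshft q_\pair=\nextlu p_\pair=(i',j'')$ for the appropriate $j''<j$, and that $\diff_w(\nextlu p_\pair,p)=\diff_x(\nextlu p_\pair,p)=0$. Next, since $\prpd_\pair(q)$ fails, Lemma \ref{lem: nitm} applied to $q$ gives that $\nextd q_\pair=(i_0,j)$ is $\downarrow$-maximal with respect to $\pair$ and $\rshft{(\nextd q_\pair)}_\pair=\nextrd q_\pair$, with $\diff_w(q,\nextrd q_\pair)=0$. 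The surviving hypothesis $\prpu_\pair(\nextd q_\pair)$ then says $\rk_\pair(\nextu{(\nextd q_\pair)}_\pair)<\rk_\pair(\nextd q_\pair)$; by Lemma \ref{lem: prpdp} (the $\prpu$ version) this means there is a point $r\in\X_\pair$ strictly above $\lshft{(\nextd q_\pair)}_\pair$ in its column with $\diff_w(\nextd q_\pair,r)=0<\diff_x(\nextd q_\pair,r)$, i.e.\ $\grph_x\cap(r,\nextd q_\pair]\ne\emptyset$. Now bring in $t=t_{i_1,i_2}$: because $q\in\Xc_{(xt,x)}=[\pnt_x(i_1),\pnt_x(i_2))$ while $p\in\X_{(xt,x)}$, and $p,q$ sit in the same column $j$ with $i'\le i$, the index interval $[i_1,i_2)$ contains $i'$ but not $i$ in the relevant sense; more precisely $\pnt_x(i_1)$ lies weakly above $q$ in column $j$ and $\pnt_x(i_2)$ lies strictly below $q$, and since $p\notin[\pnt_x(i_1),\pnt_x(i_2))$ we must have $\pnt_x(i_1)$ strictly below $p$ — forcing $\pnt_x(i_1)$ into the window $(q,p]$. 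Chasing the rank identities from the two maximality statements, one shows $\diff_w(q,p)=0$ (from $\uparrow$-maximality of $q$ together with $\lshft q_\pair=\lshft p_\pair$) would then combine with $\diff_x$ along this window to contradict either the $\uparrow$-maximality of $q$ or the positivity $0<\diff_x(\nextd q_\pair,r)$.

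**The main obstacle.** The delicate point is assembling the contradiction from $\prpu_\pair(\nextd q_\pair)$: one has three "maximal / zero-difference" constraints ($q$ is $\uparrow$-maximal, $\nextd q_\pair$ is $\downarrow$-maximal, and the witness $r$ above $\nextd q_\pair$) living in overlapping rows and columns, and the transposition $t$ inserts a point of $\grph_x$ precisely between $q$ and $p$. I expect the crux to be a careful bookkeeping argument, essentially using the simple fact \eqref{sf} repeatedly: from $\diff_w=0$ on the nested rectangles one gets $\diff_x=0$ there and corners in $\X_\pair$, which then conflicts with $q\in\Xc_{(xt,x)}$ (so $\rk_{(xt,x)}(q)=1$, i.e.\ $\grph_x$ does meet the relevant half-open box) and with $p\in\X_{(xt,x)}$. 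Concretely, I would argue that if $\prpd_\pair(q)$ failed then $\nextd q_\pair$ would be $\downarrow$-maximal and the point $\pnt_x(i_2)$ forced by $t\in\RR_\pair$ below $q$ would have to lie strictly below $\nextd q_\pair$ in its column, while $\pnt_x(i_1)\in(q,p]$; tracking $\diff_x$ across the rectangle with corners among $\{p,q,\nextd q_\pair,\nextrd q_\pair\}$ and using $\prpu_\pair(\nextd q_\pair)$ to place the extra $\grph_x$-point $r$ gives $\diff_x(\text{some }\X_\pair\text{-pair})>0$, contradicting \eqref{sf}. This pins down $\prpd_\pair(q)$, as desired.
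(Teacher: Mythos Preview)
Your overall strategy (assume $\prpd_\pair(q)$ fails and derive a contradiction from $\prpu_\pair(\nextd q_\pair)$) matches the paper, and you correctly invoke Lemmas~\ref{lem: nitm2} and~\ref{lem: nitm}. But the way you try to manufacture the contradiction is both more complicated than needed and, as written, incorrect.

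First, the geometric reading of the hypothesis is off. From $q=(i',j)\in\Xc_{(xt,x)}=[\pnt_x(i_1),\pnt_x(i_2))$ you get $i_1\le i'<i_2$ and $j_1\le j<j_2$. From $p=(i,j)\in\X_{(xt,x)}$ with the same $j$, the only possibility is $i\ge i_2$ (not $i<i_1$, since $i\ge i'\ge i_1$). So the correct conclusion is $i\ge i_2>i'$. Your claim that ``$\pnt_x(i_1)$ is strictly below $p$, forcing $\pnt_x(i_1)$ into $(q,p]$'' is wrong on two counts: $i_1\le i'\le i$, so $\pnt_x(i_1)$ is weakly \emph{above} $p$ in rows; and since $q$ and $p$ share the column $j$, the half-open box $(q,p]$ is empty in the paper's sense (the relation $<$ requires both coordinates to strictly increase). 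The relevant graph point is $\pnt_x(i_2)$, not $\pnt_x(i_1)$.

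Second, and more importantly, you miss the clean finish. Once $\prpd_\pair(q)$ is assumed to fail, write $\nextd q_\pair=q'=(i'',j)$. If $i''\ge i_2$ then $\pnt_x(i_2)\in(q,\nextrd q_\pair]$, which by \eqref{prpdholds} would give $\prpd_\pair(q)$, a contradiction; hence $i''<i_2\le i$. Now $q'$ lies in the column~$j$ with row index between $i'$ and $i$, and $\rk_\pair(q')=\rk_\pair(q)=\rk_\pair(p)$ (the first equality from the failure of $\prpd_\pair(q)$, the second from the failure of $\prpu_\pair(p)$). Lemma~\ref{lem: nitm2} part~\ref{part: two}, applied to $p$, then says directly that $\prpu_\pair(q')$ is \emph{not} satisfied --- contradicting $\qrpd_\pair(q)$. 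No extra witness point $r$, no use of \eqref{sf} on nested rectangles, and no tracking of $\pnt_x(i_1)$ is needed; the whole argument is three lines once you pin down $i'\le i''\le i$. Your final two paragraphs never actually close the loop, and the chain of implications you sketch (``tracking $\diff_x$ across the rectangle\ldots gives $\diff_x(\text{some }\X_\pair\text{-pair})>0$'') is not substantiated.
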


\begin{proof}
Let $p=(i,j)$, $q=(i',j)$ and $q'=\nextd{q}_\pair=(i'',j)$. Write $t=t_{i_1,i_2}$ and $t^x=t_{j_1,j_2}$.
By assumption $i\ge i_2>i'$ and $j_1\le j<j_2$.
Assume on the contrary that $\prpd_\pair(q)$ is not satisfied. Then $i''<i_2\le i$, for otherwise
$\pnt_x(i_2)\in (q,\nextrd{q}_\pair]$ in contradiction with \eqref{prpdholds}. Since $\rk_\pair(q)=\rk_\pair(q')$, the condition
$\prpu_\pair(q')$, guaranteed by $\qrpd_\pair(q)$, is inconsistent with Lemma \ref{lem: nitm2}, part \ref{part: two}.
\end{proof}

\begin{lemma} \label{lem: mxs}
Let $t=t_{i_1,i_2}\in\RR_\pair$, $p=(i,j)\in\Xc_{\pair_t}$ and $q=\nextu{p}_{\pair_t}=(i',j)$.
Assume that $q\in\Xc_{(xt,x)}$ and $\lshft p_\pair=(i,j')$ with $j'\ge j_1=x(i_1)$.
Then $q$ is $\uparrow$-maximal with respect to $\pair$.
\end{lemma}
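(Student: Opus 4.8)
The plan is to argue by contradiction directly from the definitions, exploiting how the level set $\X_\pair$ sits inside $\X_{\pair_t}$. First I would record the basic structure: since $t\in\RR_\pair$ we have $x\le xt\le w$, so $\X_\pair\subseteq\X_{\pair_t}$, and applying the identity $\X_\pair=\X_{(w,y)}\cap\X_{(y,x)}$ with $y=xt$ gives $\X_\pair=\X_{\pair_t}\cap\X_{(xt,x)}$; in particular $\X_\pair$ is disjoint from the rectangle $B:=\Xc_{(xt,x)}=[\pnt_x(i_1),\pnt_x(i_2))$. Next I would unwind $q=\nextu{p}_{\pair_t}=(i',j)$: writing $\beta=\max\{k<j:(i,k)\in\X_{\pair_t}\}$ we have $\nextlu{p}_{\pair_t}=(i',\beta)$, where $i'$ is the \emph{least} $l\le i$ with $(l,\beta)\in\X_{\pair_t}$ and $\diff_w(p,(l,\beta))=0$, and $\diff_w(p,(i',\beta))=0$. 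From $q\in B$ I read off $i_1\le i'<i_2$ and $j<x(i_2)$. The hypothesis enters only here: since $\lshft p_\pair=(i,j')\in\X_\pair\subseteq\X_{\pair_t}$ with $j'<j$ and $j'\ge x(i_1)$, the maximum defining $\beta$ satisfies $\beta\ge j'\ge x(i_1)$.

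Now suppose $q$ is not $\uparrow$-maximal with respect to $\pair$. As the first coordinate of $\nextu{q}_\pair$ is $\le i'$ by construction, this means $\nextu{q}_\pair=(i'',j)$ with $i''<i'$, and then $\nextlu{q}_\pair=(i'',\gamma)\in\X_\pair$ for $\gamma=\max\{k<j:(i',k)\in\X_\pair\}$, with $\diff_w(q,(i'',\gamma))=0$, while also $(i',\gamma)=\lshft q_\pair\in\X_\pair$. Since $(i',\gamma)\in\X_\pair$, its row lies in $[i_1,i_2)$ and $\gamma<j<x(i_2)$, so the disjointness $\X_\pair\cap B=\emptyset$ forces $\gamma<x(i_1)$, hence $\gamma<\beta$. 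Therefore $(i'',\gamma)$ and $(i',\beta)$ both lie in $\X_{\pair_t}$, satisfy $(i'',\gamma)\cmpr(i',\beta)$ (indeed $(i'',\gamma)<(i',\beta)$, as $i''<i'$ and $\gamma<\beta$), and $\diff_w((i'',\gamma),(i',\beta))=0$ because the rectangle $((i'',\gamma),(i',\beta)]$ is contained in $((i'',\gamma),(i',j)]$, whose $\grph_w$-count is $\diff_w(q,\nextlu{q}_\pair)=0$. Applying the simple fact \eqref{sf} to the pair $\pair_t$ now yields $\crnrs((i'',\gamma),(i',\beta))\subseteq\X_{\pair_t}$, so in particular $(i'',\beta)\in\X_{\pair_t}$.

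Finally I would check $\diff_w((i'',\beta),(i,j))=0$ by splitting the rectangle $((i'',\beta),(i,j)]$ along row $i'$: the rows above $i'$ contribute $\diff_w((i',\beta),(i,j))=\diff_w(p,\nextlu{p}_{\pair_t})=0$, and the rows up to $i'$ contribute at most $\diff_w((i'',\gamma),(i',j))=0$ since $\gamma<\beta$. Thus $i''$ satisfies $i''\le i$, $(i'',\beta)\in\X_{\pair_t}$ and $\diff_w(p,(i'',\beta))=0$, yet $i''<i'$ — contradicting the minimality of $i'$ in the definition of $\nextu{p}_{\pair_t}$. Hence $\nextu{q}_\pair=q$, i.e.\ $q$ is $\uparrow$-maximal with respect to $\pair$. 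The whole argument hinges on one observation — that the assumption $j'\ge x(i_1)$ is exactly what makes $\gamma<\beta$, so that the ``square-completion'' via \eqref{sf} inside $\X_{\pair_t}$ applies; I do not anticipate any real obstacle beyond that, apart from the routine bookkeeping of keeping the various $\nextu{}$ and $\nextlu{}$ points distinct.
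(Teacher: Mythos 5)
Your proof is correct and is essentially the paper's own argument: both assume $q$ is not $\uparrow$-maximal, apply the fact \eqref{sf} to $\pair_t$ with the points $\nextlu{q}_\pair$ and $\nextlu{p}_{\pair_t}$ (using $j'\ge j_1$ to force the column of $\nextlu{q}_\pair$ below $j_1$), and contradict the minimality in the definition of $\nextu{p}_{\pair_t}$. You are merely a bit more explicit than the paper in distinguishing $\lshft{p}_\pair$ from $\lshft{p}_{\pair_t}$ and in the final $\diff_w$ computation.
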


\begin{proof}
Assume on the contrary that $q$ is not $\uparrow$-maximal with respect to $\pair$ and let $q''=\nextlu{q}_\pair=(i'',j'')$,
$i''<i'$ and $q'=\nextlu{p}_{\pair_t}=(i',j')$.
Then $j''<j_1\le j'$ and $\diff_w(q'',q')=0$. Since $q'',q'\in\X_{\pair_t}$ we must have $(i'',j')\in\X_{\pair_t}$ by \eqref{sf}.
This violates the definition of $i'$ since $\diff_w((i'',j'),q)=0$.
\end{proof}

\subsection{Induction step}
We will prove Proposition \ref{prop: main} by induction on $\ell(\pair)$.
Unfortunately, the induction step splits into many cases and subcases.
We analyze the different cases in Lemmas \ref{lem: specase12}-\ref{lem: caseD} below.

For the rest of this (long) subsection we fix a smooth pair $\pair=(w,x)\in\pairset$ and $t\in\RRs_\pair$ and assume that
\begin{equation} \label{AI} \tag{IH}
\text{$\qrp_{\pair_t}(p)$ is satisfied for any $p\in\Xc_{\pair_t}$.}
\end{equation}
We will write $t=t_{i_1,i_2}$ and $t^x=t_{j_1,j_2}$.

\begin{subtheorem}{theorem}
\begin{lemma} \label{lem: specase12}
$\qrp_\pair(p)$ holds for any $p=(i,j)\in\Xc_{(xt,x)}$.
\end{lemma}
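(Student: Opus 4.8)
The plan is to exploit the fact that $t=t_{i_1,i_2}\in\RRs_\pair$, so $\pair_t=(w,xt)\in\pairset$ has strictly smaller length, is smooth (as noted after Corollary~\ref{cor: smth}, smoothness of $\pair$ passes to $(w,x')$ for $x\le x'\le w$), and therefore satisfies the induction hypothesis \eqref{AI}. The region $\Xc_{(xt,x)}=[\pnt_x(i_1),\pnt_x(i_2))$ is exactly the set of points where $\rk_{(xt,x)}>0$, i.e. where $\rk_{\pair_t}<\rk_\pair$ by one; so for $p\in\Xc_{(xt,x)}$ we have $p\in\Xc_\pair\cap\Xc_{(xt,x)}$ but crucially $\rk_{\pair_t}(p)=\rk_\pair(p)-1$. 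The idea is to translate the condition $\qrp_{\pair_t}(p)$, which holds by \eqref{AI} (after checking $p\in\Xc_{\pair_t}$, which follows since $x\le xt$ forces $\rk_{\pair_t}(p)\ge 0$ and $\rk_\pair(p)>0$ on this strip so $\rk_{\pair_t}(p)\ge 0$; if $\rk_{\pair_t}(p)=0$ then $p$ lies in $\X_{\pair_t}$ and we must argue separately, but in fact for $p$ interior to the strip $\rk_{\pair_t}(p)\ge 1$), into the corresponding statement $\qrp_\pair(p)$.

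First I would unwind $\qrp_{\pair_t}(p)=\prpd_{\pair_t}(p)\lor\prpu_{\pair_t}(p)\lor\prpu_{\pair_t}(\nextd{p}_{\pair_t})\lor\prpd_{\pair_t}(\nextu{p}_{\pair_t})$ and handle the four disjuncts. For the first two, the key observation is that $\X_\pair\subset\X_{\pair_t}$, so $\rshft p_\pair$ need not equal $\rshft p_{\pair_t}$: one has $\rshft p_{\pair_t}\le\rshft p_\pair$. The relevant tool is Lemma~\ref{lem: simpqp2''} (and its mirror Lemma~\ref{lem: simpqp2'}): taking $q=\rshft p_{\pair_t}$, if $q\in\Xc_\pair$ then provided $\rshft\prp_\pair(q)$ holds we get either $\prpd_\pair(p)$ directly or the equivalence of $\prpd_\pair(p)$ and $\prpd_{\pair_t}(p)$. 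And the hypothesis $\rshft\prp_\pair(q)$ is precisely what the \emph{basic case} (Proposition~\ref{prop: main1'}) delivers: since $q\in\X_{\pair_t}\setminus\X_\pair$ means $q\in\crit_\pair$, and $\diff_w(q,\rdshft q_\pair)=0$ would be forced here — this is where I'd invoke part~\eqref{part: suppl} of Proposition~\ref{prop: main1'} to conclude $\rshft\prp_\pair(q)$. Alternatively, when $q=\rshft p_{\pair_t}\in\X_\pair$ already (i.e. $q\in\X_{(xt,x)}$), then $\rshft p_\pair=\rshft p_{\pair_t}$ and the two $\prpd$ conditions coincide trivially. So from $\prpd_{\pair_t}(p)$ one extracts $\qrpd_\pair(p)$, and dually from $\prpu_{\pair_t}(p)$ one extracts $\qrpu_\pair(p)$, using Lemma~\ref{lem: simpqp'''}/\ref{lem: simpqp''} and the mirror of Proposition~\ref{prop: main1'}\eqref{part: suppl}.

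For the last two disjuncts, $\prpu_{\pair_t}(\nextd{p}_{\pair_t})$ and $\prpd_{\pair_t}(\nextu{p}_{\pair_t})$, I would first compare $\nextd{p}_{\pair_t}$ with $\nextd{p}_\pair$ using Lemma~\ref{lem: maxsmlr'} (which gives $i''\ge i'$, i.e. $\nextd{p}_\pair\le\nextd{p}_{\pair_t}$), and Lemma~\ref{lem: simpqp2''} to control whether $\nextd{p}_\pair=\nextd{p}_{\pair_t}$ or the downward step "overshoots" through a point of $\crit_\pair$. If $\nextd{p}_\pair=\nextd{p}_{\pair_t}$, then $\prpu_{\pair_t}(\nextd{p}_{\pair_t})=\prpu_{\pair_t}(\nextd{p}_\pair)$ transfers to $\prpu_\pair(\nextd{p}_\pair)$ by the same $q=\rshft{(\cdot)}_{\pair_t}$ argument applied at the point $\nextd{p}_\pair$, giving $\qrpd_\pair(p)$. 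If instead $\nextd{p}_\pair<\nextd{p}_{\pair_t}$, then (by Lemma~\ref{lem: simpqp2''}) some point of $\crit_\pair$ on the strip forces $\rshft\prp_\pair$ there, which by Lemma~\ref{lem: prpdp} yields $\prpd_\pair$ at an intermediate point and hence, tracing back, $\prpd_\pair(p)$ or $\prpu_\pair(\nextd{p}_\pair)$ directly. The symmetric analysis handles $\prpd_{\pair_t}(\nextu{p}_{\pair_t})$.

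The main obstacle I anticipate is the bookkeeping in the fourth-disjunct cases: the conditions $\qrp$ are "four-headed and non-symmetric" (as the paper warns), so when $\prpd_{\pair_t}(p)$ fails, $\nextd{p}_{\pair_t}$ is itself a moving target, and one must simultaneously track how the $\X_\pair$-vs-$\X_{\pair_t}$ discrepancy (concentrated on the strip $[\pnt_x(i_1),\pnt_x(i_2))$) interacts with the downward/upward maximality from Lemmas~\ref{lem: nitm2}, \ref{lem: nitm}, and \ref{lem: mxs}. Concretely, the delicate point is ruling out the scenario where $\nextd{p}_\pair$ or $\nextu{p}_\pair$ "lands in" the bad strip in a way that breaks the transfer; Lemma~\ref{lem: mxs} and Lemma~\ref{lem: new1} are designed precisely to close these gaps, and I would organize the proof around their hypotheses. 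I expect the final write-up to reduce, after several reductions via the symmetries $\pair\mapsto\pair^*$ and $\pair\mapsto\pair^{-1}$, to two or three genuinely distinct configurations, each dispatched by one application of Proposition~\ref{prop: main1'} together with one of the comparison lemmas above.
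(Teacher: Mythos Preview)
Your overall strategy---pull back $\qrp_{\pair_t}(p)$ via \eqref{AI} and transfer each disjunct through the comparison lemmas and Proposition~\ref{prop: main1'}---matches the paper's. But two genuine gaps would block the argument as written.

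\textbf{The case $\rk_\pair(p)=1$.} You note that $\rk_{\pair_t}(p)=\rk_\pair(p)-1$ on the strip, then say ``if $\rk_{\pair_t}(p)=0$ \dots\ we must argue separately, but in fact for $p$ interior to the strip $\rk_{\pair_t}(p)\ge1$.'' That last claim is false: any $p\in\Xc_{(xt,x)}$ with $\rk_\pair(p)=1$ (and such points certainly exist, e.g.\ $p=\pnt_x(i_1)$) has $\rk_{\pair_t}(p)=0$, so $p\in\X_{\pair_t}$ and \eqref{AI} says nothing about it. The paper treats this case first and separately: here $p\in\X_{\pair_t}\setminus\X_\pair\subset\crit_\pair$, so Proposition~\ref{prop: main1'}\eqref{part: mainr} applied \emph{to $p$ itself} gives $\prp_\pair(p)$, hence $\qrp_\pair(p)$ via Lemma~\ref{lem: prpdp}. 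Your proposal never invokes Proposition~\ref{prop: main1'} at $p$, only at auxiliary points $q=\rshft p_{\pair_t}$ etc., so this case is simply missing.

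\textbf{Unjustified use of Proposition~\ref{prop: main1'}\eqref{part: suppl}.} To feed Lemma~\ref{lem: simpqp2''} you need $\rshft\prp_\pair(q)$ for $q=\rshft p_{\pair_t}\in\crit_\pair$. You propose to get it from part~\eqref{part: suppl}, asserting ``$\diff_w(q,\rdshft q_\pair)=0$ would be forced here.'' It is not forced, and the paper does not claim it. Instead the paper uses part~\eqref{part: mainr}, which only yields the dichotomy $\rshft\prp_\pair(q)\lor\lshft\prp_\pair(q)$; the branch $\lshft\prp_\pair(q)$ is then handled by Lemma~\ref{lem: simpqp'''}, giving $\prpu_\pair(p)$ rather than $\prpd_\pair(p)$. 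You need both branches (and both target conclusions) in the analysis, and similarly for the mirror points. This also affects the harder subcase where $\prpd_{\pair_t}(p)$ fails but $\prpu_{\pair_t}(\nextd{p}_{\pair_t})$ holds: the paper's argument there splits on the position of $\nextlu{p'}_{\pair_t}$ relative to $(i_1,j_1)$ and uses $t\in\RRs_\pair$ to exclude one configuration---none of which is visible in your sketch.
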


\begin{proof}
Suppose first that $\rk_\pair(p)=1$. Then $p$ is in the critical set $\crit_\pair$ \eqref{def: critset}.
In view of Lemma \ref{lem: prpdp}, this case follows from Proposition \ref{prop: main1'} part \ref{part: mainr}.

For the rest of the proof we assume that $\rk_\pair(p)>1$.

Note that $\rk_{\pair_t}(p)=\rk_\pair(p)-1>0$.
By \eqref{AI} $\qrp_{\pair_t}(p)$ holds.
By passing to $\pair^*$, $p^*$ and $t^*$ if necessary we may assume without loss of generality that $\qrpd_{\pair_t}(p)$ is satisfied.

Write $\nextrd{p}_{\pair_t}=(i',j')$ and $p'=\nextd{p}_{\pair_t}=(i',j)$.

Suppose first that $i'\ge i_2$.

\begin{enumerate}
\item If also $j'\ge j_2$ then $\nextrd{p}_\pair=\nextrd{p}_{\pair_t}$
and $\prpd_\pair(p)$ is satisfied (even if $\prpd_{\pair_t}(p)$ is not satisfied)
since $\pnt_x(i_2)\in (p,\nextrd{p}_\pair]$.

\item Suppose that $j'<j_2$.

We have $\nextrd{p}_{\pair_t}\in\X_\pair$.
Let $q=\rshft{p}_{\pair_t}=(i,j')\in\X_{\pair_t}\setminus\X_\pair$.
Then $\lshft\prp_\pair(q)$ implies $\prpu_\pair(p)$ by Lemma \ref{lem: simpqp'''} while $\rshft\prp_\pair(q)$ implies $\prpd_\pair(p)$
by Lemma \ref{lem: simpqp2''}.
We conclude by Proposition \ref{prop: main1'} part \ref{part: mainr}.

\end{enumerate}

For the rest of the proof we assume that $i'<i_2$ so that $p'=(i',j)\in\Xc_{(xt,x)}$.
We also assume, as we may, that $\prpd_\pair(p)$ is not satisfied.

If $j'<j_2$ then we can assume by Proposition \ref{prop: main1'} part \ref{part: mainr}, applied to $(i,j')\in\X_{\pair_t}\setminus\X_\pair$,
that $\rshft\prp_\pair(i,j')$ is satisfied
since by Lemma \ref{lem: simpqp'''} $\lshft\prp_\pair(i,j')$ implies $\prpu_\pair(p)$.
By Lemma \ref{lem: simpqp2''} and our assumption,
\[
\text{$p'=\nextd{p}_\pair$ and $\prpd_{\pair_t}(p)$ is not satisfied.}
\]
The last line is also valid if $j'\ge j_2$, in which case $\nextrd{p}_\pair=\nextrd{p}_{\pair_t}$.

Thus, $\prpu_{\pair_t}(p')$ holds and $p'$ is $\downarrow$-maximal with respect to $\pair$.

Let $\nextlu{p'}_{\pair_t}=(i'',j'')$ and $p''=\lshft{p'}_{\pair_t}=(i',j'')$.
We separate the argument according to cases.
\begin{enumerate}
\item Suppose that $j''<j_1$. Then $\nextlu{p'}_\pair=\nextlu{p'}_{\pair_t}$ and hence
$\prpu_\pair(p')$ follows from $\prpu_{\pair_t}(p')$, so that $\qrpd_\pair(p)$ holds.
\item Suppose that $j''\ge j_1$ and $i''<i_1$.

By Lemma \ref{lem: simpqp''} $\rshft\prp_\pair(p'')$ doesn't hold.
Thus by Proposition \ref{prop: main1'} part \ref{part: mainr}, $\lshft\prp_\pair(p'')$ is satisfied
which by Lemma \ref{lem: simpqp2'} implies $\prpu_\pair(p')$, hence $\qrpd_\pair(p)$.

\item Finally we cannot have both $i''\ge i_1$ and $j''\ge j_1$ since otherwise, as
$\prpu_{\pair_t}(p')$ holds, there would exist $i_3$ such that $\pnt_x(i_3)\in (\nextlu{p'}_{\pair_t},p']\subset (\pnt_x(i_1),\pnt_x(i_2))$
violating the assumption that $\tau\in\RRs_\pair$.
\end{enumerate}
The proof of the lemma is complete.
\end{proof}

\begin{lemma} \label{lem: caseA}
$\qrp_\pair(p)$ holds for any $p=(i,j)\in\Xc_\pair$ such that that $i_2\le i$ and $j_1\le j<j_2$.
\end{lemma}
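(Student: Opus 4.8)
The plan is to reduce everything to the pair $\pair_t=(w,xt)$ and the induction hypothesis \eqref{AI}. Since $i_2\le i$ and $j_1\le j<j_2$, a direct count gives $\rk_{(xt,x)}(p)=0$; more precisely, $\rk_{(xt,x)}$ vanishes identically off the block $\Xc_{(xt,x)}=[\pnt_x(i_1),\pnt_x(i_2))$ and equals $1$ on it. Hence $\rk_\pair$ and $\rk_{\pair_t}$ coincide on every row $\ge i_2$ and on every column outside $[j_1,j_2)$, while they differ by exactly $1$ on the block. In particular $\rk_\pair(p)=\rk_{\pair_t}(p)$, so $p\in\Xc_{\pair_t}$ and \eqref{AI} gives $\qrp_{\pair_t}(p)$. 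Moreover the downward data of $p$ lives entirely on rows $\ge i_2$, so $\rshft p_\pair=\rshft p_{\pair_t}$, $\nextrd{p}_\pair=\nextrd{p}_{\pair_t}$, $\nextd{p}_\pair=\nextd{p}_{\pair_t}$, and therefore $\prpd_\pair(p)\iff\prpd_{\pair_t}(p)$.

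I would then split according to which disjunct of $\qrp_{\pair_t}(p)$ holds. If $\prpd_{\pair_t}(p)$ holds, the last observation finishes. If $\prpu_{\pair_t}(\nextd{p}_{\pair_t})$ holds, then $q:=\nextd{p}_\pair=\nextd{p}_{\pair_t}$ again sits on a row $\ge i_2$ and one repeats the analysis for its upward data: either $\nextu{q}_\pair=\nextu{q}_{\pair_t}$ lands off the block and $\prpu_\pair(q)$, i.e. the disjunct $\prpu_\pair(\nextd{p}_\pair)$ of $\qrp_\pair(p)$, follows; or it lands inside the block, in which case one applies Lemma \ref{lem: specase12} to that landing point, upgrades to its $\qrpd_\pair$ via Lemma \ref{lem: nitm2} and \eqref{impluord}, and converts this back to a disjunct of $\qrp_\pair(p)$ through Lemma \ref{lem: new1}. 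The two remaining cases, $\prpu_{\pair_t}(p)$ and $\prpd_{\pair_t}(\nextu{p}_{\pair_t})$, are where the real work lies: here I would put $q=\lshft p_{\pair_t}$ or track $\nextu{p}_{\pair_t}$ directly and argue, according to the position of the relevant left and right columns and of $\nextu{p}_{\pair_t}$ relative to $i_1,i_2,j_1,j_2$, via Proposition \ref{prop: main1'} on the critical points together with the transfer lemmas \ref{lem: simpqp'''}, \ref{lem: maxsmlr'}, \ref{lem: simpqp2''} and their symmetric companions \ref{lem: simpqp''}, \ref{lem: maxsmlr}, \ref{lem: simpqp2'}, with Lemma \ref{lem: mxs} controlling $\uparrow$-maximality when $\lshft p_\pair$ has column $\ge j_1$, and with Lemmas \ref{lem: new1} and \ref{lem: specase12} again closing the cases where the upward path meets the block.

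The main obstacle is exactly this block-crossing. The hypothesis region lies directly below $\Xc_{(xt,x)}$ in the row coordinate but inside its column range, so, unlike in Lemma \ref{lem: specase12}, no $*$-symmetry normalizes it, and $\qrpd_{\pair_t}(p)$ and $\qrpu_{\pair_t}(p)$ must be handled separately. Downward and rightward motion is harmless since $\rk_\pair=\rk_{\pair_t}$ there, but any upward motion may enter the block, where $\rk_\pair=\rk_{\pair_t}+1$, so a $\prpu$-type condition for $\pair_t$ need not survive for $\pair$. The recurring remedy is to locate a point of $\Xc_{(xt,x)}$ along the path, apply the already-established Lemma \ref{lem: specase12} there, and pull the conclusion back using $\uparrow$-maximality (\eqref{impluord}, Lemma \ref{lem: nitm2}) and Lemma \ref{lem: new1}. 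The bulk of the proof is then the bookkeeping of where the upward path lands relative to the four parameters $i_1,i_2,j_1,j_2$.
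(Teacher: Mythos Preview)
Your plan matches the paper's approach almost exactly: reduce to $\pair_t$, note that the downward data of $p$ coincides for $\pair$ and $\pair_t$, and then fight the case where the upward path crosses into the block $\Xc_{(xt,x)}$ using Lemma~\ref{lem: specase12} at the crossing point together with the transfer lemmas. The paper also groups $\prpu_{\pair_t}(p)$ and $\prpu_{\pair_t}(\nextd{p}_{\pair_t})$ into one case ``$\prpu_{\pair_t}(p')$ with $p'\in\{p,\nextd{p}_\pair\}$'', which streamlines what you split in two.

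There is, however, a real gap in your block-crossing step. You assert that once $p'':=\nextu{p'}_{\pair_t}$ lands in the block one ``upgrades to its $\qrpd_\pair$ via Lemma~\ref{lem: nitm2} and \eqref{impluord}, and converts this back \dots\ through Lemma~\ref{lem: new1}''. This works only when the left column $j''$ of $\nextlu{p'}_{\pair_t}$ satisfies $j''<j_1$, because then $\nextlu{p'}_{\pair_t}\in\X_\pair$, one can identify $p''=\nextu{p'}_\pair=\nextu{p}_\pair$ via Lemma~\ref{lem: nitm2}, and the hypotheses of Lemma~\ref{lem: new1} are met. When $j''\ge j_1$ the point $\nextlu{p'}_{\pair_t}$ lies in $\Xc_\pair$, you cannot identify $p''$ with $\nextu{p}_\pair$, and Lemma~\ref{lem: new1} does not apply. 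The paper handles this subcase by a genuinely different and longer argument: Lemma~\ref{lem: mxs} gives $\uparrow$-maximality of $p''$ for $\pair$, then after Lemma~\ref{lem: specase12} one splits on whether $\prpd_\pair(p'')$ holds; if it does, a further split on the row of $\nextd{p''}_\pair$ versus the row of $p'$ is resolved using Proposition~\ref{prop: main1'} part~\ref{part: suppl} (not part~\ref{part: mainr}); if it does not, one derives a contradiction with the definition of $\nextu{p'}_{\pair_t}$. None of this reduces to Lemma~\ref{lem: new1}.

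A similar under-specification occurs in your treatment of the case $\prpd_{\pair_t}(\nextu{p}_{\pair_t})$ with $\neg\prpu_{\pair_t}(p)$. When $p':=\nextu{p}_{\pair_t}$ falls in the block, the paper must first pin down $\nextu{p}_\pair$ explicitly as $(i_3,j)$ with $i_3=\min\{k>i':(k,j')\in\X_\pair\}$, then split on whether the right column of $\nextrd{p'}_{\pair_t}$ is $\ge j_2$ or $<j_2$; the former again uses Proposition~\ref{prop: main1'} part~\ref{part: suppl}, the latter uses Lemmas~\ref{lem: simpqp'''} and~\ref{lem: simpqp2''}. Your sketch names the right lemmas but does not indicate this bifurcation or the need for the supplementary part of Proposition~\ref{prop: main1'}, which is the step that actually closes these subcases.
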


\begin{proof}
Note that $\rk_{\pair_t}(p)=\rk_\pair(p)$ and hence by \eqref{AI}, $\qrp_{\pair_t}(p)$ is satisfied.
Under our condition on $t$ we have $\nextrd{p}_{\pair_t}=\nextrd{p}_\pair$,
and hence the conditions $\prpd_{\pair_t}(p)$ and $\prpd_\pair(p)$ are equivalent.
Therefore, we may assume that they are not satisfied.

Suppose first that $\prpu_{\pair_t}(p')$ holds where $p'=p$ or $p'=\nextd{p}_\pair=\nextd{p}_{\pair_t}$.
Write $p'=(i',j)$ and recall that $\rk_\pair(p')=\rk_\pair(p)$.
Let $q=\nextlu{p'}_{\pair_t}=(i'',j'')\in\X_{\pair_t}$ and $p''=\nextu{p'}_{\pair_t}=(i'',j)$
so that $\lshft{p'}_\pair=(i',j'')$ and $i''<i\le i'$.
Note that $\lshft{p'}_{\pair_t}=\lshft{p'}_\pair$ by the condition on $t$.
If $p''\in\X_{(xt,x)}$ (which means that either $i''<i_1$ or $i''\ge i_2$)
then $q\in\X_\pair$, $\nextu{p'}_\pair=\nextu{p'}_{\pair_t}$,
$\diff_x(q,p')=\diff_{xt}(q,p')$ and hence $\prpu_\pair(p')$ follows from $\prpu_{\pair_t}(p')$.

Thus, we may assume that $i_1\le i''<i_2$, i.e. that $p''\in\Xc_{(xt,x)}$.
\begin{enumerate}
\item Assume first that $q\in\X_\pair$ (i.e., that $q\in\X_{(xt,x)}$, or equivalently that $j_1>j''$).

Then $\nextu{p'}_\pair=p''$.
We may assume that $\prpu_\pair(p')$ is not satisfied, for otherwise $\qrpd_\pair(p)$ holds.
By Lemma \ref{lem: nitm2} $p''$ is $\uparrow$-maximal with respect to $\pair$,
$\nextu{p}_\pair=\nextu{p'}_\pair=p''$ and $\prpu_\pair(p)$ is not satisfied.
It follows from Lemma \ref{lem: specase12} and \eqref{impluord} that $\qrpd_\pair(p'')$ holds.
It follows from Lemma \ref{lem: new1} that $\prpd_\pair(p'')$ is satisfied and therefore so does $\qrpu_\pair(p)$.

\item Now assume that $q\in\Xc_\pair$, that is $j_1\le j''$.

By Lemma \ref{lem: mxs} $p''$ is $\uparrow$-maximal with respect to $\pair$.
Once again, by Lemma \ref{lem: specase12} and \eqref{impluord} $\qrpd_\pair(p'')$ holds. Let $\nextrd{p''}_\pair=(i''',j''')$ and $p'''=\nextd{p''}_\pair=(i''',j)$.
Note that $j'''\ge j_2$. Also, since $\prpu_{\pair_t}(p')$ is satisfied we have $\rk_\pair(p'')=\rk_{\pair_t}(p'')+1\le
\rk_{\pair_t}(p)=\rk_\pair(p)$.
\begin{enumerate}
\item Suppose that $\prpd_\pair(p'')$ holds.
\begin{enumerate}
\item Suppose that $i'''\le i'$.

Then $\diff_w(q,p''')=0$ and hence $\diff_\pair(q,p''')\ge0$.
On the other hand by assumption, $\rk_\pair(p''')<\rk_\pair(p'')$  and $q\in\X_{\pair_t}\setminus\X_\pair$.
Therefore, $(i''',j'')\in\X_\pair$ and $\rk_\pair(p''')=\rk_\pair(p'')-1$.
By Lemma \ref{lem: prpdp} this implies that $\prpu_\pair(p')$ is satisfied.
\item Suppose that $i'''>i'$. Let
\[
i_3=\min\{k>i'':(k,j'')\in\X_\pair\}.
\]
Note that $i_2\le i_3\le i'<i'''$ and that $\rshft{q}_\pair=\rshft{p''}_\pair$. Let $q'=(i_3,j''')$.
Since $\diff_w(q,q')=\diff_w(q,(i_3,j))+\diff_w(p'',q')=0$
we may apply Proposition \ref{prop: main1'} part \ref{part: suppl} to $q$ to infer that $q'\in\X_\pair$
and $\diff_x(q,q')=1$, hence $\diff_x(p'',q')=1$ as $\pnt_x(i_2)\in (p'',q']$.
Hence, $\rk_\pair(i_3,j)=\rk_\pair(p'')-1<\rk_\pair(p')$.
As before, it follows from Lemma \ref{lem: prpdp} that $\prpu_\pair(p')$ is satisfied.
\end{enumerate}
\item Suppose that $\prpd_\pair(p'')$ is not satisfied.

Then $\rk_\pair(p''')=\rk_\pair(p'')$, $i'''<i_2$ (for otherwise $\pnt_x(i_2)\in (p'',\nextrd{p''}_\pair]$)
and $\rk_\pair(i_4,j)\ge\rk_\pair(p'')$ for any $i''\le i_4\le i'''$.
Let $q'':=\nextlu{p'''}_\pair=(i_5,j_5)\in\X_\pair$ so that $\nextu{p'''}_\pair=(i_5,j)$.
Then $j_5<j_1\le j''$. Since $\qrpd_\pair(p'')$ holds, $\prpu_\pair(p''')$ is satisfied and therefore $\rk_\pair(\nextu{p'''}_\pair)<\rk_\pair(p'')$.
Hence $i_5<i''$ and therefore $q''<q$.
Since $\diff_w(q'',q)=0$ and $q\in\X_{\pair_t}$ we necessarily have $(i_5,j'')\in\X_{\pair_t}$.
Since $\diff_w(q'',p'')=0$ we would get a contradiction to the fact that $p''=\nextu{p'}_{\pair_t}$.
\end{enumerate}
\end{enumerate}

It remains to consider the case where $\prpu_{\pair_t}(p)$ does not hold but $\prpd_{\pair_t}(p')$ holds for $p'=\nextu{p}_{\pair_t}$.

Write $q=\nextlu{p}_{\pair_t}=(i',j')\in\X_{\pair_t}$ and $p'=\nextu{p}_{\pair_t}=(i',j)$ and note that $\lshft{p}_\pair=\lshft{p}_{\pair_t}=(i,j')$.

As before, if $p'\in\X_{(xt,x)}$, i.e., if either $i'<i_1$ or $i'\ge i_2$ then
$p'=\nextu{p}_\pair$, $q=\nextlu{p}_\pair$ and $\diff_x(q,p)=\diff_{xt}(q,p)$.
Moreover, since $\nextd{p'}_{\pair_t}=(i'',j)$ with $i''>i$ we have $\nextrd{p'}_{\pair_t}=\nextrd{p'}_\pair$ and
$\diff_x(p',\nextrd{p'}_\pair)=\diff_{xt}(p',\nextrd{p'}_\pair)$.
Hence $\qrpu_\pair(p)$ follows from $\qrpu_{\pair_t}(p)$.

Assume therefore that $i_1\le i'<i_2$, i.e., $p'\in\Xc_{(xt,x)}$.
Since $\prpu_{\pair_t}(p)$ does not hold (by assumption), we necessarily have $j'\ge j_1$
for otherwise $\pnt_{xt}(i_2)\in (\nextlu{p}_{\pair_t},p]$.

It follows from the definitions that $p'':=\nextu{p}_\pair=(i_3,j)$ where
\[
i_3=\min\{k>i':(k,j')\in\X_\pair\}.
\]
(with $i_2\le i_3\le i$).
By Lemma \ref{lem: nitm2} we have $\rk_\pair(p'')=\rk_{\pair_t}(p'')=\rk_{\pair_t}(p')=\rk_\pair(p')-1=\rk_\pair(p)=\rk_{\pair_t}(p)$.

Write $\nextrd{p'}_{\pair_t}=(i'',j'')$ so that $\rshft{p'}_{\pair_t}=(i',j'')\in\X_{\pair_t}$ and recall that $i''>i$.

\begin{enumerate}
\item Suppose that $j''\ge j_2$.

Then $\rshft{q}_\pair=\rshft{p'}_\pair=\rshft{p'}_{\pair_t}$.
By Proposition \ref{prop: main1'} part \ref{part: suppl} (applied to $q$) we have $(i_3,j'')\in\X_\pair$
and $\diff_{xt}(q,(i_3,j''))=0$.
Thus, $\rshft{p''}_\pair=(i_3,j_3)$ with $j_3\le j''$. We claim that in fact $j_3=j''$.
Indeed, if $j_3<j''$ then necessarily $(i',j_3)\in\X_{\pair_t}$ since $\diff_{xt}((i',j_3),(i_3,j''))=\diff_w((i',j_3),(i_3,j''))=0$
and $(i_3,j_3),(i_3,j''),(i',j'')\in\X_{\pair_t}$, and this contradicts the definition of $j''$.

We conclude from the condition $\prpd_{\pair_t}(p')$ that $\prpd_\pair(p'')$ holds.
Thus $\qrpu_\pair(p)$ or $\prpd_\pair(p)$ is satisfied depending on whether $i_3<i$ or $i_3=i$.

\item Suppose that $j''<j_2$.

Since $p'$ is $\uparrow$-maximal with respect to $\pair_t$ (Lemma \ref{lem: nitm2}), it is also $\uparrow$-maximal with respect to $\pair$
(Lemma \ref{lem: maxsmlr}).
Let $q'=\rshft{p'}_{\pair_t}\in\X_{\pair_t}\setminus\X_\pair$.
By Lemma \ref{lem: simpqp'''} we cannot have $\lshft\prp_\pair(q')$.
Hence, by Proposition \ref{prop: main1'} part \ref{part: mainr} we have $\rshft\prp_\pair(q')$.
By Lemma \ref{lem: simpqp2''} we conclude from $\prpd_{\pair_t}(p')$ that $\prpd_\pair(p')$ holds.
\end{enumerate}
This finishes the proof of the lemma.
\end{proof}

\begin{lemma} \label{lem: caseB}
$\qrp_\pair(p)$ holds for any $p=(i,j)\in\Xc_\pair$ such that $i<i_1<i_2\le i_0$ and $j_1\le j<j_2$ where
$\nextd{p}_\pair=(i_0,j)$.
\end{lemma}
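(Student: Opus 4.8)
The plan is to carry out the induction on $\ell(\pair)$ by comparing $\pair$ with $\pair_t$, in the same spirit as Lemmas~\ref{lem: specase12} and~\ref{lem: caseA}. Since $i<i_1$, the point $p=(i,j)$ lies outside the rectangle $\Xc_{(xt,x)}$, so $\rk_{(xt,x)}(p)=0$ and hence $\rk_{\pair_t}(p)=\rk_\pair(p)>0$; thus $p\in\Xc_{\pair_t}$ and \eqref{AI} gives $\qrp_{\pair_t}(p)$. First I would check that the decorated points attached to $p$ agree for $\pair$ and $\pair_t$. The upward data $\rshft{p}_\pair$, $\lshft{p}_\pair$, $\nextu{p}_\pair$, $\nextlu{p}_\pair$ only involve rows $\le i<i_1$, where $\rk_\pair\equiv\rk_{\pair_t}$, so they coincide with their $\pair_t$-counterparts and $\rk_\pair$, $\rk_{\pair_t}$ agree at $p$ and at $\nextu{p}_\pair$. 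For the downward data one uses the hypothesis $i_2\le i_0$: by Lemma~\ref{lem: maxsmlr'} the row of $\nextrd{p}_{\pair_t}$ is at least the row $i_0$ of $\nextrd{p}_\pair$, hence at least $i_2$, so $\nextrd{p}_{\pair_t}$ lies below the rectangle $\Xc_{(xt,x)}$, hence in $\X_\pair$; this forces $\nextd{p}_\pair=\nextd{p}_{\pair_t}$ and $\nextrd{p}_\pair=\nextrd{p}_{\pair_t}$, and the ranks agree at $\nextd{p}_\pair=(i_0,j)$ as well.

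Given these identifications I would split according to which disjunct of $\qrp_{\pair_t}(p)$ holds. If $\prpd_{\pair_t}(p)$ or $\prpu_{\pair_t}(p)$ holds then, the relevant ranks being equal, $\prpd_\pair(p)$ resp.\ $\prpu_\pair(p)$ holds and we are done. So assume both fail; then $\prpd_\pair(p)$, $\prpu_\pair(p)$ fail too, so $\rk_\pair(\nextd{p}_\pair)=\rk_\pair(p)$, and $\qrp_{\pair_t}(p)$ leaves only $\prpu_{\pair_t}(\nextd{p}_{\pair_t})$ or $\prpd_{\pair_t}(\nextu{p}_{\pair_t})$. Because $\prpd_{\pair_t}(p)$ fails, Lemma~\ref{lem: nitm} shows $q:=\nextd{p}_\pair=\nextd{p}_{\pair_t}$ is $\downarrow$-maximal with respect to $\pair_t$, hence (Lemma~\ref{lem: maxsmlr'}) with respect to $\pair$; moreover $q=(i_0,j)\in\Xc_\pair$ lies in the range treated by Lemma~\ref{lem: caseA}. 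Because $\prpu_{\pair_t}(p)$ fails, Lemmas~\ref{lem: nitm2} and~\ref{lem: maxsmlr} show $r:=\nextu{p}_\pair=\nextu{p}_{\pair_t}$ is $\uparrow$-maximal with respect to $\pair$, with row $\le i<i_1$.

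If $\prpu_{\pair_t}(\nextd{p}_{\pair_t})$ holds, I would apply Lemma~\ref{lem: caseA} to $q$; as $q$ is $\downarrow$-maximal, \eqref{impluord} gives $\prpu_\pair(q)\lor\prpd_\pair(\nextu{q}_\pair)$. If $\prpu_\pair(q)$ holds this is $\prpu_\pair(\nextd{p}_\pair)$, the third disjunct of $\qrp_\pair(p)$. If instead $\prpd_\pair(\nextu{q}_\pair)$ holds (and $\prpu_\pair(q)$ fails), Lemma~\ref{lem: nitm2} applied to $q$ controls the column $j$ between $\nextu{q}_\pair$ and $q$; as long as the row of $\nextu{q}_\pair$ is $\le i$ it identifies $\nextu{p}_\pair=\nextu{q}_\pair$, so $\prpd_\pair(\nextu{q}_\pair)=\prpd_\pair(\nextu{p}_\pair)$ is the fourth disjunct. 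Dually, if $\prpd_{\pair_t}(\nextu{p}_{\pair_t})$ holds, I aim for $\prpd_\pair(r)$, the fourth disjunct of $\qrp_\pair(p)$. Writing $\nextrd{r}_{\pair_t}=(i''',\rho)$: whenever $\nextrd{r}_{\pair_t}$ lies outside the rectangle $\Xc_{(xt,x)}$ one gets $\nextd{r}_\pair=\nextd{r}_{\pair_t}$, and if in addition $\nextd{r}_\pair=(i''',j)$ lies outside the rectangle — automatic once $i'''\notin[i_1,i_2)$ — the ranks agree and $\prpd_\pair(r)\iff\prpd_{\pair_t}(r)$, which holds.

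The real obstacle is the residual case left over in both of the last two branches: when the point in play ($\nextu{q}_\pair$ in the first, $\nextd{r}_\pair$ in the second) lands strictly inside the band $i_1\le\cdot<i_2$, so that $\rk_\pair$ exceeds $\rk_{\pair_t}$ by $1$ there and the naive transfer fails. Here I expect to have to imitate the machinery of Lemma~\ref{lem: caseA}: replace the offending point by the appropriate corner $(\,\cdot\,,j')$ of the rectangle it forms with the corresponding $\rshft{}$ or $\lshft{}$, which lies in $\X_{\pair_t}\setminus\X_\pair$ with $\pair$-rank $1$; apply Proposition~\ref{prop: main1'} to that corner to obtain $\rshft\prp_\pair$ or $\lshft\prp_\pair$ there (excluding the wrong alternative via Lemmas~\ref{lem: simpqp'''} and~\ref{lem: simpqp''}); and then promote this to $\prpd_\pair$ or $\prpu_\pair$ at $p$, $\nextd{p}_\pair$ or $\nextu{p}_\pair$ by Lemmas~\ref{lem: simpqp2''}, \ref{lem: simpqp2'}, \ref{lem: mxs} and~\ref{lem: new1}. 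This step should be the longest and most delicate, since it splits further according to the position of $\nextrd{}_{\pair_t}$ relative to $\pnt_x(i_1)$ and $\pnt_x(i_2)$ and to whether the auxiliary corner points belong to $\X_\pair$ — the same bookkeeping that already makes the proof of Lemma~\ref{lem: caseA} lengthy.
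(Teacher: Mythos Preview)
Your initial identifications are correct, and so is your reduction to the two remaining branches $\prpu_{\pair_t}(\nextd{p}_{\pair_t})$ and $\prpd_{\pair_t}(\nextu{p}_{\pair_t})$. What you are missing is that both branches are much \emph{easier} than Lemma~\ref{lem: caseA}, not harder; your anticipated ``residual cases'' essentially dissolve.

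In the first branch you invoke Lemma~\ref{lem: nitm} for $\pair_t$ but use only part~(1); the paper uses part~(4). Since $\prpd_{\pair_t}(p)$ fails while $\prpu_{\pair_t}(q)$ holds (so $\qrpd_{\pair_t}(p)$ holds), part~(4) gives that $\nextu{q}_{\pair_t}$ has row strictly less than $i$, hence $<i_1$. Then $\nextlu{q}_{\pair_t}$ lies in a row $<i_1$ and is automatically in $\X_\pair$; combined with $\lshft{q}_\pair=\lshft{q}_{\pair_t}$ (row $i_0\ge i_2$) this forces $\nextu{q}_\pair=\nextu{q}_{\pair_t}$, and $\prpu_\pair(q)$ follows directly from $\prpu_{\pair_t}(q)$. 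No appeal to Lemma~\ref{lem: caseA} is needed, and your first ``residual case'' is vacuous. In fact the paper treats $p'=p$ and $p'=\nextd{p}_\pair$ uniformly in one short paragraph.

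In the second branch, with $r=\nextu{p}_\pair=\nextu{p}_{\pair_t}$, the paper again avoids the machinery you anticipate. Writing $\rshft{p}_\pair=(i,j_0)$ and $\nextrd{r}_{\pair_t}=(i',j')$, one first observes $j_0<j_2$ (otherwise $\pnt_x(i_2)\in(p,\nextrd{p}_\pair]$ would force $\prpd_\pair(p)$), and then $j'\le j_0$ (otherwise a rank count on the rectangle with corners $r$ and $(i,j_0)$ contradicts $\rk_\pair(r)=\rk_\pair(p)$ together with $(i,j_0)\in\X_\pair$). This column inequality is the whole trick: whether $\nextrd{r}_{\pair_t}\in\X_\pair$ or not, one exhibits a point $(i^*,j')\in\X_\pair$ (namely $(i',j')$ itself, or else $(i_0,j')$) with $\diff_w(r,(i^*,j'))=0<\diff_x(r,(i^*,j'))$, and Lemma~\ref{lem: prpdp} yields $\prpd_\pair(r)$ directly. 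None of Lemmas~\ref{lem: simpqp2''}, \ref{lem: simpqp2'}, \ref{lem: mxs}, \ref{lem: new1} nor Proposition~\ref{prop: main1'} enter here.

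So your outline is in the right direction, but the ``longest and most delicate'' step you expect evaporates once you exploit part~(4) of Lemma~\ref{lem: nitm} on $\pair_t$ and the column bound $j'\le j_0$; the paper's actual proof is roughly half the length of that of Lemma~\ref{lem: caseA}.
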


\begin{proof}
Note that $\rk_{\pair_t}(p)=\rk_\pair(p)$ and in particular $p\in\Xc_{\pair_t}$.
Hence by \eqref{AI}, $\qrp_{\pair_t}(p)$ is satisfied.
Under our condition on $t$ we have 
$\nextrd{p}_{\pair_t}=\nextrd{p}_\pair$, $\diff_x(p,\nextrd{p}_\pair)=\diff_{xt}(p,\nextrd{p}_\pair)$
and the conditions $\prpd_{\pair_t}(p)$ and $\prpd_\pair(p)$ are equivalent.
Therefore, we may assume that $\prpd_{\pair_t}(p)$ (and $\prpd_\pair(p)$) are not satisfied.

Suppose first that $\prpu_{\pair_t}(p')$ holds where $p'=p$ or $p'=\nextd{p}_\pair=\nextd{p}_{\pair_t}$.
Write $p'=(i',j)$ and note that $\rk_\pair(p')=\rk_\pair(p)$.
Let $q=\nextlu{p'}_{\pair_t}=(i'',j'')\in\X_{\pair_t}$ and $p''=\nextu{p'}_{\pair_t}=(i'',j)$.
By the condition on $t$ we have $\lshft{p'}_{\pair_t}=\lshft{p'}_\pair$.
On the other hand, by Lemma \ref{lem: nitm} (applied to $\pair_t$) we have $i''<i\le i'$.
Hence $p''\in\X_{(xt,x)}$, $q\in\X_\pair$, $\nextlu{p'}_{\pair_t}=\nextlu{p}_\pair$ and the condition $\prpu_\pair(p')$ follows from $\prpu_\pair(p)$.

It remains to consider the case where $\prpu_{\pair_t}(p)$ does not hold but $\prpd_{\pair_t}(p')$ holds for $p'=\nextu{p}_{\pair_t}=(i'',j)$.
In particular, $\rk_\pair(p')=\rk_\pair(p)$.
(Of course, $\rk_{\pair_t}(p')=\rk_\pair(p')$ and $\rk_{\pair_t}(p)=\rk_\pair(p)$ since $i<i_1$.)

Write $q'=\rshft{p}_\pair=(i,j_0)\in\X_\pair$ and $q=\nextrd{p'}_{\pair_t}=(i',j')$.
Since $i<i_1$ we have $p'=\nextu{p}_\pair$ and $\rshft{p'}_\pair=\rshft{p'}_{\pair_t}$.
Also, $j_0<j_2$ for otherwise $\pnt_x(i_2)\in (p,\nextrd{p}_\pair]$ since $i_2\le i_0$, and $\prpd_\pair(p)$ would be satisfied.
By Lemma \ref{lem: nitm} (applied to $\pair_t$) we have $i'>i$.
Also, $j_0\ge j'$ for otherwise $\diff_\pair(p',q')=\diff_x(p',q')\ge0$ and $(i'',j_0)\in\Xc_\pair$
and this contradicts the fact that $q'\in\X_\pair$ and $\rk_\pair(p')=\rk_\pair(p)$.
If $q\in\X_\pair$ then $\prpd_\pair(p')$ is satisfied by Lemma \ref{lem: prpdp}, and hence $\qrpd_\pair(p)$ holds.
Otherwise, $q\in\X_{\pair_t}\setminus\X_\pair$ and $(i',j_0)\in\Xc_\pair$ (since $j_0<j_2$).
Since $\diff_x(q,\nextrd{p}_\pair)=0$ and $\nextrd{p}_\pair\in\X_\pair$, we must have $(i_0,j')\in\X_\pair$.
Hence, $\prpd_\pair(p')$ is satisfied by Lemma \ref{lem: prpdp} (and in fact $j'=j_0$).
\end{proof}

\begin{lemma} \label{lem: caseC}
$\qrp_\pair(p)$ holds for any $p=(i,j)\in\Xc_\pair$ such that $i<i_1$ and $\nextd{p}_\pair\in\Xc_{(xt,x)}$.
\end{lemma}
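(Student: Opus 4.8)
Fix $p=(i,j)$ and write $\nextd{p}_\pair=(i_0,j)$; the hypotheses then read $i<i_1$ and $i_1\le i_0<i_2$, $j_1\le j<j_2$, so that $\nextd{p}_\pair\in\Xc_{(xt,x)}\subset\Xc_\pair$. The plan is to feed $\nextd{p}_\pair$ into Lemma~\ref{lem: specase12} and then propagate the resulting information back up the column $j$ to $p$ using the monotonicity Lemmas~\ref{lem: nitm} and~\ref{lem: nitm2}; the induction hypothesis \eqref{AI} enters only through Lemma~\ref{lem: specase12}. Recall from \eqref{prpdholds} — together with the remark that $\rk_\pair(\nextd{p}_\pair)\le\rk_\pair(p)$ always holds, since $\rshft{p}_\pair,\nextrd{p}_\pair\in\X_\pair$ and $\diff_w(p,\nextrd{p}_\pair)=0$ — that $\prpd_\pair(p)$ fails exactly when $\rk_\pair(\nextd{p}_\pair)=\rk_\pair(p)$.

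If $\prpd_\pair(p)$ holds we are done. Otherwise $\rk_\pair(\nextd{p}_\pair)=\rk_\pair(p)$ and, by Lemma~\ref{lem: nitm}, $\nextd{p}_\pair$ is $\downarrow$-maximal with respect to $\pair$. Lemma~\ref{lem: specase12} gives $\qrp_\pair(\nextd{p}_\pair)$, and since $\nextd{p}_\pair$ is $\downarrow$-maximal, \eqref{impluord} improves this to $\qrpu_\pair(\nextd{p}_\pair)$, that is, $\prpu_\pair(\nextd{p}_\pair)$ or $\prpd_\pair(\nextu{(\nextd{p}_\pair)}_\pair)$. The former is the third disjunct of $\qrp_\pair(p)$, so we may assume that $\prpu_\pair(\nextd{p}_\pair)$ fails and $\prpd_\pair(\nextu{(\nextd{p}_\pair)}_\pair)$ holds, and it remains only to derive $\prpd_\pair(\nextu{p}_\pair)$, the fourth disjunct.

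Write $\nextlu{(\nextd{p}_\pair)}_\pair=(a,b)$, so $\nextu{(\nextd{p}_\pair)}_\pair=(a,j)$ and $(a,b)\in\X_\pair$. Since $\prpu_\pair(\nextd{p}_\pair)$ fails, Lemma~\ref{lem: nitm2} applies to $\nextd{p}_\pair$; evaluating its column-rank statement at the index $a$ (where $(a,b)\in\X_\pair$) yields $\rk_\pair(a,j)=\rk_\pair(\nextd{p}_\pair)=\rk_\pair(p)$. As $\prpd_\pair(a,j)$ holds, $(a,j)$ is not $\downarrow$-maximal, and since $a\le i_0$ with equality forcing $(a,j)=\nextd{p}_\pair$ (which is $\downarrow$-maximal), we get $a<i_0$. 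The key point is $a\le i$: were $i<a<i_0$, then $a$ would be an index in $[i,i_0]$ at which the column-rank equals $\rk_\pair(p)$, and Lemma~\ref{lem: nitm} applied to $p$ would then force $\prpd_\pair(a,j)$ to fail, contradicting the preceding sentence. With $a\le i\le i_0$ in hand, Lemma~\ref{lem: nitm2} applied to $\nextd{p}_\pair$ at the index $i$ (again an index where the column-rank equals $\rk_\pair(\nextd{p}_\pair)$, because $\rk_\pair(i,j)=\rk_\pair(p)$) gives $\nextu{p}_\pair=\nextu{(\nextd{p}_\pair)}_\pair$, whence $\prpd_\pair(\nextu{p}_\pair)$ holds and $\qrp_\pair(p)$ follows.

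The main obstacle is precisely this last passage: one must align the rank equality obtained by descending the column $j$ from $p$ to $\nextd{p}_\pair$ (Lemma~\ref{lem: nitm}) with the one obtained by ascending from $\nextd{p}_\pair$ (Lemma~\ref{lem: nitm2}), so as to trap $a$ between $i$ and $i_0$ at a position visible to both lemmas and conclude $a\le i$; once this is done, the two $\nextu{}$'s coincide and everything collapses to the fourth disjunct. The rest is routine bookkeeping, and in particular the hypothesis $i<i_1$ appears to be needed only to keep this case disjoint from Lemmas~\ref{lem: specase12} and~\ref{lem: caseA}.
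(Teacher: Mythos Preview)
Your proof is correct and follows the paper's overall scheme: feed $p':=\nextd{p}_\pair$ into Lemma~\ref{lem: specase12}, upgrade via \eqref{impluord} to $\qrpu_\pair(p')$, dispose of $\prpu_\pair(p')$ (which gives $\qrpd_\pair(p)$), and in the residual case invoke the column-monotonicity Lemmas~\ref{lem: nitm} and~\ref{lem: nitm2}. The divergence is in how that residual case is closed. Writing $\nextlu{p'}_\pair=(a,b)$, the paper uses the hypothesis $i<i_1$ essentially: it observes $b<j_1$, then argues that $a<i_1$ would place $\pnt_x(i_1)\in(\nextlu{p'}_\pair,p']$ and force $\prpu_\pair(p')$; hence $a\ge i_1>i$, and Lemma~\ref{lem: nitm} gives an outright contradiction --- the residual case is \emph{vacuous}. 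You avoid $\pnt_x(i_1)$ entirely and instead split into $a>i$ (contradiction via Lemma~\ref{lem: nitm}, as in the paper) and $a\le i$ (positive conclusion $\nextu{p}_\pair=(a,j)$ via Lemma~\ref{lem: nitm2}). That second subcase never actually occurs, but your treatment of it is sound, so the proof stands; your closing remark that $i<i_1$ serves only to disjoint the cases is accurate for \emph{your} argument, though not for the paper's.
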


\begin{proof}
Write $\nextrd{p}_\pair=(i_0,j_0)\in\X_\pair$, $p'=\nextd{p}_\pair=(i_0,j)$ and note that $j_0\ge j_2$.
Also note that the condition $\prpd_{\pair_t}(p)$ is satisfied.

We may assume of course that $\prpd_\pair(p)$ is not satisfied, so that $\rk_\pair(p')=\rk_\pair(p)$.

We apply Lemma \ref{lem: specase12} to $p'$. Recall that $p'$ is $\downarrow$-maximal with respect to $\pair$.
Thus, by \eqref{impluord}  $\qrpu_\pair(p')$ is satisfied.
Since $\prpu_\pair(p')$ implies $\qrpd_\pair(p)$, we may assume that $\prpu_\pair(p')$ is not satisfied.
Let $p''=\nextu{p'}_\pair=(i',j)$.
Then $\rk_\pair(p'')=\rk_\pair(p')=\rk_\pair(p)$ and $\prpd_\pair(p'')$ holds.
Let $\nextlu{p'}_\pair=(i',j')$ so that $\lshft{p'}_\pair=(i_0,j')$. Then $j'<j_1$.
Also $i'\ge i_1>i$ for otherwise $\pnt_x(i_1)\in (\nextlu{p'}_\pair,p']$ which would imply $\prpu_\pair(p')$.
By Lemma \ref{lem: nitm} this would contradict $\prpd_\pair(p'')$.
\end{proof}

\begin{lemma} \label{lem: caseD}
$\qrp_\pair(p)$ holds for any $p=(i,j)\in\Xc_\pair$ such that $i_0<i_2$ and $j_2\le j$ where $\nextd{p}_\pair=(i_0,j)$.
\end{lemma}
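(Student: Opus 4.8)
The plan is to run the induction step just as in Lemmas \ref{lem: caseA}--\ref{lem: caseC}, comparing $\pair$ with $\pair_t$ through the rank difference. Write $j_1=x(i_1)$, $j_2=x(i_2)$, so $j_1<j_2$; put $R:=\Xc_{(xt,x)}=[\pnt_x(i_1),\pnt_x(i_2))$, and recall that $R$ lies in the rows $[i_1,i_2)$ and the columns $[j_1,j_2)$, that $R\subset\Xc_\pair$ by \eqref{eq: tijR}, and that $\rk_\pair(q)-\rk_{\pair_t}(q)$ equals $1$ for $q\in R$ and $0$ otherwise; in particular $\X_\pair$ and $\X_{\pair_t}$, together with the rank functions, agree at every point in columns $\ge j_2$. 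Since the column $j$ of $p$ satisfies $j\ge j_2$, the points $p$, $\nextd{p}_\pair=(i_0,j)$, $\nextrd{p}_\pair$ and $\rshft{p}_\pair$ all lie in columns $\ge j_2$, so $p\in\Xc_{\pair_t}$ with $\rk_{\pair_t}(p)=\rk_\pair(p)$, $\nextrd{p}_\pair=\nextrd{p}_{\pair_t}$, $\nextd{p}_\pair=\nextd{p}_{\pair_t}=(i_0,j)$, and, by \eqref{prpdholds}, $\prpd_\pair(p)\iff\prpd_{\pair_t}(p)$. By the induction hypothesis \eqref{AI}, $\qrp_{\pair_t}(p)$ holds. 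If $\prpd_\pair(p)$ holds we are done, so assume it fails; then so does $\prpd_{\pair_t}(p)$, whence by Lemma \ref{lem: nitm} (for $\pair_t$) and Lemma \ref{lem: maxsmlr'} the point $(i_0,j)$ is $\downarrow$-maximal with respect to both $\pair_t$ and $\pair$, while a short computation with \eqref{prpdholds} gives $\rk_\pair(i_0,j)=\rk_\pair(p)>0$. It remains to promote one of the three surviving disjuncts $\prpu_{\pair_t}(p)$, $\prpu_{\pair_t}(i_0,j)$, $\prpd_{\pair_t}(\nextu{p}_{\pair_t})$ of $\qrp_{\pair_t}(p)$ to a disjunct of $\qrp_\pair(p)$.

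For the first two disjuncts, let $z\in\{(i,j),(i_0,j)\}$ be a point with $\prpu_{\pair_t}(z)$; write $z=(r,j)$ (so $r\le i_0<i_2$) and $\lshft{z}_{\pair_t}=(r,c)$. If $r<i_1$, or $c\ge j_2$, or $c<j_1$, then the relevant row and column avoid $R$, so $\lshft{z}_\pair=\lshft{z}_{\pair_t}$, $\nextu{z}_\pair=\nextu{z}_{\pair_t}$ and $\prpu_\pair(z)\iff\prpu_{\pair_t}(z)$, yielding $\qrp_\pair(p)$. In the remaining case $i_1\le r<i_2$ and $j_1\le c<j_2$ we have $(r,c)\in R$ and $(r,c)=\lshft{z}_{\pair_t}\in\X_{\pair_t}$, so $(r,c)\in\X_{\pair_t}\setminus\X_\pair\subset\crit_\pair$; I would then apply Proposition \ref{prop: main1'}, part \ref{part: mainr}, to $(r,c)$. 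The alternative $\rshft\prp_\pair(r,c)$ is impossible: Lemma \ref{lem: simpqp''} (with $p$ there replaced by $z$ and $q=(r,c)$) would force $\nextd{z}_\pair$ to have row $\ge i_2$, contradicting the $\downarrow$-maximality of $(i_0,j)$ together with $i_0<i_2$. Hence $\lshft\prp_\pair(r,c)$ holds, and Lemma \ref{lem: simpqp2'} (again with $p$ replaced by $z$) gives either $\prpu_\pair(z)$ outright or $\nextu{z}_\pair=\nextu{z}_{\pair_t}$ with $\prpu_\pair(z)\iff\prpu_{\pair_t}(z)$; either way $\qrp_\pair(p)$ follows.

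For the third disjunct one may assume that $\prpu_{\pair_t}(p)$ and $\prpu_{\pair_t}(i_0,j)$ both fail (otherwise the previous paragraph applies), so $\prpd_{\pair_t}(\nextu{p}_{\pair_t})$ holds; by Lemma \ref{lem: nitm2} the point $p'=\nextu{p}_{\pair_t}=(i',j)$ with $i'<i$ is $\uparrow$-maximal with respect to $\pair_t$ and $\rk_{\pair_t}(p')=\rk_{\pair_t}(p)$. Running the same left-neighbour dichotomy on $\lshft{p}_{\pair_t}=(i,c)$: when $i<i_1$, or $c\ge j_2$, or $c<j_1$, one gets $\lshft{p}_\pair=\lshft{p}_{\pair_t}$ and $\nextu{p}_\pair=\nextu{p}_{\pair_t}=p'$, and since $\nextrd{p'}_\pair$ (like $p'$) lies in columns $\ge j_2$ and so equals $\nextrd{p'}_{\pair_t}$, one has $\prpd_\pair(p')\iff\prpd_{\pair_t}(p')$, whence $\qrpu_\pair(p)$. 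When $i_1\le i<i_2$ and $j_1\le c<j_2$, the same argument as in the second paragraph (Proposition \ref{prop: main1'}, then Lemma \ref{lem: simpqp''} to kill $\rshft\prp_\pair(i,c)$ using $i_0<i_2$, then Lemma \ref{lem: simpqp2'}) transfers the condition; this is the analogue of the last part of the proof of Lemma \ref{lem: caseB}, simplified by the fact that $j\ge j_2$ forces the relevant ``$j''$'' occurring there to be $\ge j_2$.

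I expect the main obstacle, as in the sister lemmas, to be purely the case bookkeeping: there is no single clean reduction, and one must check, disjunct by disjunct and according to the position of $c$ relative to $j_1,j_2$, that the $\uparrow$-maps of $\pair$ and $\pair_t$ either coincide or differ only by a controlled excursion of the left-neighbour column into $R$, which is then absorbed by the basic case. The genuinely delicate point is that excursion (most visibly for the disjunct $\prpu_{\pair_t}(i_0,j)$ with $i_1\le i_0<i_2$): there one must use the $\downarrow$-maximality of $(i_0,j)$ and the \emph{strict} inequality $i_0<i_2$ to eliminate the $\rshft\prp_\pair$ possibility offered by Proposition \ref{prop: main1'}, and then keep track of the rank inequalities and of whether $\nextlu{z}_{\pair_t}\in\X_\pair$ when invoking Lemma \ref{lem: simpqp2'}.
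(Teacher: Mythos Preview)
Your proof is correct and follows essentially the same approach as the paper's: you reduce to $\qrp_{\pair_t}(p)$ via \eqref{AI}, note that the $\downarrow$-data of $p$ coincide for $\pair$ and $\pair_t$ (since $j\ge j_2$), and then for each surviving disjunct split on whether the relevant left-neighbour $\lshft{z}_{\pair_t}$ lies in $\Xc_{(xt,x)}$, invoking Proposition~\ref{prop: main1'} together with Lemmas~\ref{lem: simpqp''} and~\ref{lem: simpqp2'} in the affirmative case. The only differences are organizational---you treat $z\in\{p,\nextd{p}_\pair\}$ uniformly whereas the paper handles them in separate paragraphs, and you phrase the dichotomy via the coordinates of $(r,c)$ rather than membership in $\X_\pair$---but the logical content is identical.
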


\begin{proof}
We have $\rk_{\pair_t}(p)=\rk_\pair(p)$ and in particular $p\in\Xc_{\pair_t}$. By \eqref{AI}, $\qrp_{\pair_t}(p)$ is satisfied.
Note that the conditions $\prpd_\pair(p)$ and $\prpd_{\pair_t}(p)$ are equivalent. We may therefore assume that they do not hold.
We separate according to cases.
\begin{enumerate}
\item Suppose that $\prpu_{\pair_t}(p')$ holds for $p'=\nextd{p}_\pair=\nextd{p}_{\pair_t}=(i_0,j)$.

Let $q=\nextlu{p'}_{\pair_t}=(i',j')$ and $p''=\lshft{p'}_{\pair_t}=(i_0,j')\in\X_{\pair_t}$.
\begin{enumerate}
\item Suppose that $p''\in\X_\pair$.

Then $p''=\lshft{p'}_\pair$ and since $i_2>i_0$ we have $\nextlu{p'}_\pair=q$, $\nextu{p'}_\pair\in\X_{(xt,x)}$.
The condition $\prpu_\pair(p')$ (and hence, $\qrpd_\pair(p)$) follows from $\prpu_{\pair_t}(p')$.

\item Assume that $p''\in\Xc_\pair$.

By Lemma \ref{lem: simpqp''} the condition $\rshft\prp_\pair(p'')$ would contradict
the fact that $p'$ is $\downarrow$-maximal with respect to $\pair$.

Hence by Proposition \ref{prop: main1'} part \ref{part: mainr} $\lshft\prp_\pair(p'')$ holds.
By Lemma \ref{lem: simpqp2'} we have $\prpu_\pair(p')$ and hence $\qrpd_\pair(p)$.

\end{enumerate}
\item Suppose that $\prpu_{\pair_t}(p)$ holds.

Let $q=\nextlu{p}_{\pair_t}=(i',j')$ and $p'=\lshft{p}_{\pair_t}=(i,j')$.

\begin{enumerate}
\item Suppose that $p'\in\X_\pair$.

Then $\lshft{p}_\pair=p'$, $\nextlu{p}_\pair=q$, $\nextu{p}_\pair\in\X_{(xt,x)}$.
Hence $\prpu_\pair(p)$ follows from $\prpu_{\pair_t}(p)$.

\item Suppose that $p'\in\Xc_\pair$.

By Lemma \ref{lem: simpqp''} we cannot have $\rshft\prp_\pair(p')$ since $i_2>i_0$.

Hence, by Proposition \ref{prop: main1'} part \ref{part: mainr} $\lshft\prp_\pair(p')$ holds.
By Lemma \ref{lem: simpqp2'} we have $\prpu_\pair(p)$.
\end{enumerate}

\item Suppose that $\prpu_{\pair_t}(p)$ does not hold but $\prpd_{\pair_t}(\nextu{p}_{\pair_t})$ holds.

Again, let $q=\nextlu{p}_{\pair_t}=(i',j')$ and $p'=\lshft{p}_{\pair_t}=(i,j')$.

\begin{enumerate}
\item As before, if $p'\in\X_\pair$ then $\lshft{p}_\pair=p'$,
$\nextlu{p}_\pair=q$, $\nextu{p}_\pair=\nextu{p}_{\pair_t}$
and $\qrpu_\pair(p)$ follows from $\qrpu_{\pair_t}(p)$.

\item Suppose that $p'\in\Xc_\pair$.
By Lemma \ref{lem: simpqp''} we cannot have $\rshft\prp_\pair(p')$ since $i_2>i_0$.

Hence, by Proposition \ref{prop: main1'} part \ref{part: mainr} $\lshft\prp_\pair(p')$ holds.
By Lemma \ref{lem: simpqp2'} if $q\in\X_\pair$ then $\prpu_\pair(p)$ holds,
or otherwise, $\nextu{p}_\pair=\nextu{p}_{\pair_t}$, and hence $\qrpu_\pair(p)$ is satisfied.
\end{enumerate}
\end{enumerate}
\end{proof}

\end{subtheorem}

\subsection{Conclusion of proof}
In order to conclude the proof of \ref{prop: main} we need to show that the cases covered in
the previous subsection are exhaustive.

Let
\[
\auxs_\pair(p)=\{t_{i_1,i_2}\in\RR_\pair:\pnt_x(i_1)\le p\text{ but }\pnt_x(i_2)\not\le p\}
\]
and $\auxss_\pair(p)=\auxs_\pair(p)\cap\RRs_\pair$.

\begin{lemma} \label{lem: simptau}
$\auxss_\pair(p)\ne\emptyset$ for any $p\in\Xc_\pair$.
\end{lemma}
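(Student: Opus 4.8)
The plan is to show that $\auxs_\pair(p)\ne\emptyset$ first, and then upgrade this to $\auxss_\pair(p)\ne\emptyset$ using the standard observation that if $t_{i_1,i_2}\in\RR_\pair$ is not in $\RRs_\pair$ then it can be ``split'': there is an $i'$ with $t_{i_1,i'},t_{i',i_2}\in\RR_\pair$, and then $\pnt_x(i')$ either lies $\le p$ or not, so one of the two shorter transpositions again belongs to $\auxs_\pair(p)$. Iterating, and using that $\ell$ strictly decreases at each split, we eventually reach an element of $\RRs_\pair$ still in $\auxs_\pair(p)$, i.e. in $\auxss_\pair(p)$. So the whole content is the nonemptiness of $\auxs_\pair(p)$.

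For that, write $p=(i,j)\in\Xc_\pair$, so $\rk_\pair(p)=\rk_x(p)-\rk_w(p)>0$. The idea is to find a row index $i_1\le i$ with $x(i_1)\le j$ (so $\pnt_x(i_1)\le p$) which ``fails'' in $w$ in the sense that $\pnt_x(i_1)\notin\X_\pair$ along a suitable chain, and then invoke \eqref{betii} to produce a transposition $t_{i_1,i_2}\in\RR_\pair$; the point will be to arrange that $\pnt_x(i_2)\not\le p$. Concretely, since $\rk_x(p)>\rk_w(p)$, among the $\rk_x(p)$ indices $u\le i$ with $x(u)\le j$, strictly fewer than $\rk_x(p)$ of them satisfy $w(u)\le j$; hence there is some $i_1\le i$ with $x(i_1)\le j$ but which is ``not matched'' by $w$ below $j$. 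I would choose $i_1$ to be, say, the largest such index with $\pnt_x(i_1)\in\Xc_\pair$ — more precisely, I want $\pnt_x(i_1)\in\Xc_\pair$; this is possible because if every $\pnt_x(u)$ with $u\le i$, $x(u)\le j$ were in $\X_\pair$ then $\rk_w(p)$ would already equal $\rk_x(p)$, contradicting $p\in\Xc_\pair$. Now apply \eqref{betii} to $\pnt_x(i_1)\in\Xc_\pair$: there exists $i'$ with $t_{\{i_1,i'\}}\in\RR_\pair$. Writing this as $t_{a,b}$ with $a<b$, by \eqref{eq: tijR} we have $\Xc_{(xt_{a,b},x)}=[\pnt_x(a),\pnt_x(b))\subset\Xc_\pair$, and $\pnt_x(i_1)\in[\pnt_x(a),\pnt_x(b))$.

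The remaining task — and I expect this to be the main obstacle — is to guarantee that $\pnt_x(b)\not\le p$. Suppose for contradiction that $\pnt_x(b)\le p$, i.e. $b\le i$ and $x(b)\le j$. Since $\pnt_x(a)\le\pnt_x(i_1)\le p$ as well, the whole interval $[\pnt_x(a),\pnt_x(b))$ would then be contained in the rectangle $\{q\le p\}$. I would then use \eqref{eq: tijR} together with the structure of $\Xc_\pair$ near $p$: because $\rk_\pair$ is a rank-type function, $p\in\Xc_\pair$, and we have some freedom in choosing which splitting transposition to keep, I can select the decomposition of the chosen transposition so that the ``right endpoint'' column exceeds $j$ or the ``bottom endpoint'' row exceeds $i$. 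If a direct choice is not available, the cleaner route is: among all $t_{a,b}\in\RR_\pair$ with $\pnt_x(a)\le p$ (this set is nonempty by the previous paragraph), pick one with $b$ maximal; if $\pnt_x(b)\le p$ then since $\pnt_x(a)<\pnt_x(b)$ and $[\pnt_x(a),\pnt_x(b))\subset\Xc_\pair$ contains a point of the interior not in $\X_\pair$, one shows using \eqref{sf} (applied where appropriate) that $\rk_\pair$ cannot vanish at $p$ unless some strictly larger $b$ also works — contradicting maximality. This reduces everything to bookkeeping with the rank function and the already-established facts \eqref{betii}, \eqref{eq: tijR}, \eqref{sf}; I do not foresee any need for smoothness here, only the combinatorics of $\pairset$.
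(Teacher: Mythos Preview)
Your reduction from $\auxss_\pair(p)$ to $\auxs_\pair(p)$ via splitting is correct and is exactly how the paper begins. The gap is in your direct argument for $\auxs_\pair(p)\ne\emptyset$.

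Step 1 (finding $i_1$ with $\pnt_x(i_1)\le p$ and $\pnt_x(i_1)\in\Xc_\pair$) is achievable, but not for the reason you give: the implication ``if every such $\pnt_x(u)$ lies in $\X_\pair$ then $\rk_w(p)=\rk_x(p)$'' is asserted without proof and is not evident. (A correct route: take $a$ minimal with $(a,j)\in\Xc_\pair$; then $x(a)\le j<w(a)$, and comparing $\rk_\pair$ at $(a,x(a))$ and $(a-1,x(a))$ shows $\pnt_x(a)\in\Xc_\pair$.) The serious problem is Step~3. After \eqref{betii} produces $t_{i_1,i_2}\in\RR_\pair$, you need $\pnt_x(i_2)\not\le p$, and your maximality argument does not establish this. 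Concretely: if $b$ is maximal with $t_{a,b}\in\RR_\pair$ and $\pnt_x(a)\le p$, and if $\pnt_x(b)\le p$, you would need $\pnt_x(b)\in\Xc_\pair$ to re-apply \eqref{betii} and contradict maximality. But $\pnt_x(b)\notin[\pnt_x(a),\pnt_x(b))$, so nothing forces $\pnt_x(b)\in\Xc_\pair$; for instance with $w=2143$, $x=1234$ one has $t_{1,2}\in\RR_\pair$ yet $\pnt_x(2)=(2,2)\in\X_\pair$. Your appeal to \eqref{sf} is too vague to bridge this.

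The paper sidesteps the difficulty entirely by induction on $\ell(\pair)$. Pick any $t'\in\RRs_\pair$. If $p\in\Xc_{(xt',x)}$ then $t'\in\auxs_\pair(p)$ directly. Otherwise $p\in\Xc_{\pair_{t'}}$, so by the induction hypothesis there is $s\in\auxs_{\pair_{t'}}(p)$, and one checks that Gasharov's map satisfies $\phi_{t'}^\pair(\auxs_{\pair_{t'}}(p))\subset\auxs_\pair(p)$. This last containment is the actual content, and it is what replaces your unfinished maximality step.
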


\begin{proof}
We first remark that it suffices to show that $\auxs_\pair(p)\ne\emptyset$ for any $p\in\Xc_\pair$
since if $t_{i_1,i_2}\in\auxs_\pair(p)$ then for any $i'$ such that $t_{i_1,i'},t_{i',i_2}\in\RR_\pair$ we have either $t_{i,i'}\in\auxs_\pair(p)$
or $t_{i',j}\in\auxs_\pair(p)$.
We prove the statement by induction on $\ell(\pair)$.
The statement is empty if $\ell(\pair)=0$.
For the induction step, assume that $\ell(\pair)>0$ and observe that if $t\in\RR_\pair$ and $p\in\Xc_{(xt,x)}$ then $t\in\auxs_\pair(p)$.
Let $t'\in\RRs_\pair$ and $p\in\Xc_\pair$. If $t'\in\auxs_\pair(p)$ we are done.
Otherwise, $p\in\Xc_{\pair_{t'}}$ and by induction hypothesis $\auxs_{\pair_{t'}}(p)\ne\emptyset$.
However, it is straightforward to check that the map $\phi_{t'}^\pair$ of \S\ref{sec: gash} satisfies
$\phi_{t'}^\pair(\auxs_{\pair_{t'}}(p))\subset\auxs_\pair(p)$. 
\end{proof}

\begin{proof}[Proof of Proposition \ref{prop: main}]
We will prove the proposition by induction on $\ell(\pair)=\ell(x)-\ell(w)\ge0$.
The case $\ell(w)=\ell(x)$ (i.e., $w=x$) is of course trivial.
Let $p\in\Xc_\pair$. We may assume of course that $\prpd_\pair(p)$ is not satisfied. Write $p'=\nextd{p}_\pair=(i_0,j)$.
Since $\rk_\pair(p')=\rk_\pair(p)$ we have $p'\in\Xc_\pair$. By Lemma \ref{lem: simptau} $\auxss_\pair(p')\ne\emptyset$.
Let $t\in\auxss_\pair(p')$. The induction hypothesis implies \eqref{AI}.
Writing $t=t_{i_1,i_2}$ and $t^x=t_{j_1,j_2}$, exactly one of the following conditions hold.
\begin{enumerate}
\item $p\in\Xc_{(xt,x)}(=[\pnt_x(i_1),\pnt_x(i_2)))$.
\item $i_2\le i$ and $j_1\le j<j_2$.
\item $i<i_1<i_2\le i_0$ and $j_1\le j<j_2$.
\item $i<i_1$ and $p'\in\Xc_{(xt,x)}$
\item $i_1\le i_0<i_2$ and $j_2\le j$.
\end{enumerate}
We apply Lemmas \ref{lem: specase12}, \ref{lem: caseA}, \ref{lem: caseB}, \ref{lem: caseC} and \ref{lem: caseD}
respectively to conclude in each case that $\qrp_\pair(p)$ holds as required.
\end{proof}


\def\cprime{$'$} \def\Dbar{\leavevmode\lower.6ex\hbox to 0pt{\hskip-.23ex
  \accent"16\hss}D} \def\cftil#1{\ifmmode\setbox7\hbox{$\accent"5E#1$}\else
  \setbox7\hbox{\accent"5E#1}\penalty 10000\relax\fi\raise 1\ht7
  \hbox{\lower1.15ex\hbox to 1\wd7{\hss\accent"7E\hss}}\penalty 10000
  \hskip-1\wd7\penalty 10000\box7}
  \def\polhk#1{\setbox0=\hbox{#1}{\ooalign{\hidewidth
  \lower1.5ex\hbox{`}\hidewidth\crcr\unhbox0}}} \def\dbar{\leavevmode\hbox to
  0pt{\hskip.2ex \accent"16\hss}d}
  \def\cfac#1{\ifmmode\setbox7\hbox{$\accent"5E#1$}\else
  \setbox7\hbox{\accent"5E#1}\penalty 10000\relax\fi\raise 1\ht7
  \hbox{\lower1.15ex\hbox to 1\wd7{\hss\accent"13\hss}}\penalty 10000
  \hskip-1\wd7\penalty 10000\box7}
  \def\ocirc#1{\ifmmode\setbox0=\hbox{$#1$}\dimen0=\ht0 \advance\dimen0
  by1pt\rlap{\hbox to\wd0{\hss\raise\dimen0
  \hbox{\hskip.2em$\scriptscriptstyle\circ$}\hss}}#1\else {\accent"17 #1}\fi}
  \def\bud{$''$} \def\cfudot#1{\ifmmode\setbox7\hbox{$\accent"5E#1$}\else
  \setbox7\hbox{\accent"5E#1}\penalty 10000\relax\fi\raise 1\ht7
  \hbox{\raise.1ex\hbox to 1\wd7{\hss.\hss}}\penalty 10000 \hskip-1\wd7\penalty
  10000\box7} \def\lfhook#1{\setbox0=\hbox{#1}{\ooalign{\hidewidth
  \lower1.5ex\hbox{'}\hidewidth\crcr\unhbox0}}}
\providecommand{\bysame}{\leavevmode\hbox to3em{\hrulefill}\thinspace}
\providecommand{\MR}{\relax\ifhmode\unskip\space\fi MR }
\providecommand{\MRhref}[2]{%
  \href{http://www.ams.org/mathscinet-getitem?mr=#1}{#2}
}
\providecommand{\href}[2]{#2}

\end{document}